\documentclass[10pt,twoside]{amsart}
\setcounter{page}{1}

\usepackage{amsmath,amsfonts,amssymb,amsxtra,bm,setspace,xspace,graphicx,lmodern,psfrag,epsfig,color,latexsym,stmaryrd,scalerel,stackengine,url,enumitem}
\usepackage[colorlinks=true]{hyperref}\hypersetup{urlcolor=blue, citecolor=red}

\numberwithin{equation}{section}
\newtheorem{thm}{Theorem}[section]

\newtheorem{lem}[thm]{Lemma}

\newtheorem{rem}[thm]{Remark}

\newtheorem{exm}[thm]{Example}

\stackMath
\newcommand\reallywidehat[1]{%
	\savestack{\tmpbox}{\stretchto{%
			\scaleto{%
				\scalerel*[\widthof{\ensuremath{#1}}]{\kern-.6pt\bigwedge\kern-.6pt}%
				{\rule[-\textheight/2]{1ex}{\textheight}}
			}{\textheight}%
		}{0.5ex}}%
	\stackon[1pt]{#1}{\tmpbox}%
}
\newcommand{\vertiii}[1]{{\left\vert\kern-0.25ex\left\vert\kern-0.25ex\left\vert #1 
		\right\vert\kern-0.25ex\right\vert\kern-0.25ex\right\vert}}

\newcommand{\R}{{\mathbb R}}

\newcommand{\N}{{\mathbb N}}

\begin{document}
	
	\title[DG method for Vlasov-Stokes]{Error Estimates for discontinuous Galerkin approximations to the Vlasov-unsteady Stokes system }
	
	\bibliographystyle{alpha}
	
	\author[Harsha Hutridurga]{Harsha Hutridurga}
	\address{H.H.: Department of Mathematics, Indian Institute of Technology Bombay, Powai, Mumbai 400076 India.}
	\email{hutri@math.iitb.ac.in}
	
	\author[Krishan Kumar]{Krishan Kumar}
	\address{K.K.: Department of Mathematics, BITS Pilani, K K Birla Goa Campus, NH 17 B, Zuarinagar, Goa, India.}
	\email{krishan1624@gmail.com}
	
	\author[Amiya K. Pani]{Amiya K. Pani}
	\address{A.K.P. (Corresponding Author): Department of Mathematics, BITS Pilani, K K Birla Goa Campus, NH 17 B, Zuarinagar, Goa, India. }
	\email{amiyap@goa.bits-pilani.ac.in}
	
	\date{\today}
	
	\thispagestyle{empty}
		
		\begin{abstract}
In the first part of this paper, uniqueness of strong solution is established for the Vlasov-unsteady Stokes problem in $3$D. The second part deals with a semi discrete scheme, which is based on the coupling of discontinuous Galerkin approximations for the Vlasov and the Stokes equations for the $2$D problem. The proposed method is both mass and momentum conservative. Based on a special projection and also the Stokes projection, optimal error estimates in the case of smooth compactly supported initial data are derived. Moreover, the generalization of error estimates to $3$D problem is also indicated. Finally, based on time splitting  algorithm, some numerical experiments are conducted whose results confirm our theoretical findings.
		\end{abstract}
		
		\maketitle
	\textbf{Key words.} Vlasov-unsteady Stokes, uniqueness in 3D, discontinuous Galerkin method, conservation properties, optimal error estimates, nonlinear version of  Gr\"onwall's inequality, numerical experiments.

	\textbf{ AMS Subject Classification (MSC 2020).}  35D35, 65N30, 65M15

	\section{Introduction}\label{Int}
	
This paper deals with error estimates for discontinuous Galerkin method for a coupled system of partial differential equations arising in the study of \emph{thin sprays}. From a modeling perspective, it is assumed that the spray particles (droplets) are a dispersed phase in a gas medium. Studying two-phase models comprising of a kinetic equation for the dispersed phase and a fluid equation for the gas dates back to the works of O'Rourke \cite{o1981collective} and Williams \cite{williams2018combustion} (also, see \cite{caflisch1983dynamic}).

We deal with the unsteady Stokes equation for the fluid part and the Vlasov equation for the droplet distribution while the coupling is via a drag term:
\begin{equation}\label{eq:continuous-model}
	\left\{
	\begin{aligned}
		\partial_t f + v\cdot \nabla_x f + \nabla_v \cdot \Big( \left( \bm{u} - v \right) f \Big) & = 0 \qquad \qquad  \mbox{ in }(0,T)\times\Omega_x\times\R^d,
		\\
		f(0,x,v) & = f_{0}(x,v)\quad  \mbox{ in }\Omega_x\times\R^d.
	\end{aligned}
	\right.
\end{equation}
\begin{equation}\label{contstokes}
	\left\{
	\begin{aligned}
		\partial_t \bm{u} - \Delta_x \bm{u} +\nabla_x p & = \int_{\R^d}\left(v - \bm{u}\right)f\,{\rm d}v  \quad\quad \mbox{ in }\,\,\,(0,T)\times\Omega_x,
		\\
		\nabla_x \cdot \bm{u} & = 0 \qquad\qquad\qquad\qquad\,\, \mbox{ in }\,\,\,\Omega_x,
		\\
		\bm{u}(0,x) &= \bm{u_0}(x) \qquad\qquad\qquad\quad \mbox{in}\,\,\,\Omega_x.
	\end{aligned}
	\right.
\end{equation}	
Here, $\Omega_x$ denotes the $d$-dimensional torus $\mathbb{T}^d$ for $d = 2,3$, $\bm{u}(t,x)$ is the fluid velocity, the fluid pressure $p(t,x)$ and the droplet distribution function $f(t,x,v)$. We impose periodic boundary conditions in the $x$ variable. The above model with homogeneous Dirichlet boundary condition for the fluid velocity and with specular reflection boundary condition for the droplet distribution was studied by Hamdache in \cite{Hamdache_1998}, wherein he proved the existence of global-in-time weak solutions. Various kinetic-fluid equations have been studied in the literature: Vlasov-Burgers equations \cite{domelevo1999existence, goudon2001asymptotic}; Vlasov-Euler equations \cite{baranger2006coupling}; Vlasov-Navier-Stokes equations \cite{boudin2009global, chae2011global, yu2013global, han2019uniqueness} and references therein.

In this paper, we provide certain qualitative and quantitative properties of solutions to our system \eqref{eq:continuous-model}-\eqref{contstokes}. While a proof of the global-in-time existence of strong solutions to the system \eqref{eq:continuous-model}-\eqref{contstokes} in $3$D is discussed in \cite{chae2011global}, but uniqueness is missing therefore, our first attempt is to prove uniqueness in $3$D.

The emphasis of this paper is on a numerical method that we are proposing to approximate solutions to the system \eqref{eq:continuous-model}-\eqref{contstokes}. Our current contribution is a first step towards developing a robust numerical scheme for approximating solutions to a more physically relevant model such as the Vlasov-Navier-Stokes equations. The proposed semi-discrete scheme uses discontinuous Galerkin method for both kinetic and unsteady Stokes equations. Our present work is inspired by \cite{ayuso2009discontinuous, heath2012discontinuous, de2012discontinuous} for the Vlasov-Poisson system, \cite{hutridurga2024discontinuous} for the Vlasov-steady Stokes and  \cite{cheng2014discontinuous, yang2017discontinuous} for the Vlasov-Maxwell system. kinetic equation is a transport problem which conserves total mass. As discontinuous Galerkin(dG) methods have the property to preserve the conservation property, dG turns out to be a method of choice for the kinetic equation. It should be noted that either the local discontinuous Galerkin or any conforming numerical method can be employed for the Stokes system. Our major contributions in this paper are as follows:
	\begin{itemize}
	\item Uniqueness of the strong solutions in three dimensions is proved for the Vlasov-unsteady Stokes system.
	Earlier in \cite{hutridurga2023periodic}, there is an error in the proof of uniqueness and a correct proof of uniqueness is given in Section \ref{cts}  of this paper. This can be achieved after proving some higher moment estimates  for the droplet distribution  and regularity results  for the fluid velocity vector.
		\item DG methods are proposed for numerical approximations and are shown to conserve mass and the total momentum, which confirm similar conservation properties for the continuous system.
		\item Optimal error estimates are derived for the fluid velocity and the fluid pressure approximations using Stokes projection in $2$D setup, while in $3$D case some remarks are given in the subsection  \ref{3Dsubsec}.  Moreover, Optimal error estimate for the approximation of the droplet distribution  is obtained using a special projections. The error analysis uses a variant of the nonlinear Gronwall's inequality in a crucial way.
		\item Since phase space is four or six dimensions  depending on $d=2$ or $d=3,$  a Lie splitting in time marching combined with dimension splitting  in the phase space is proposed for  computational experiments, whose  numerical results confirm our theoretically findings. 
	\end{itemize}
	The paper is structured as follows. In Section \ref{cts}, we deal with certain qualitative and quantitative aspects of the solutions to the continuum model. We further prove uniqueness result for strong solutions to \eqref{eq:continuous-model}-\eqref{contstokes}. In Section  \ref{dg}, we introduce a semi-discrete dG-dG numerical scheme and analyse some of its properties. The error analysis for the said semi-discrete method is detailed in Section \ref{err}. We give some concluding remarks in Section \ref{sec:conclude}. Finally, some numerical results confirming our theoretical results are discussed. 
	%
	\section{Certain aspects of the continuum model}\label{cts}

In this paper, let$\rho$ and $V$ denote the local density and the local macroscopic velocity of the droplets, respectively, as
	\[
	\rho(t,x) = \int_{\R^d} f(t,x,v)\,{\rm d}v  \quad \mbox{and} \quad V(t,x) = \frac{1}{\rho}\int_{\R^d} f(t,x,v)v\,{\rm d}v.
	\]	
Define the $l^{th}$ order velocity moments of distribution function as
	\[
	m_lf(t,x) = \int_{\R^d}|v|^lf(t,x,v)\,{\rm d}v, \quad \mbox{for} \quad l \in \N\cup \{0\}.
	\]
Throughout this manuscript,  standard notations for Sobolev spaces with their norms are used, say, the space $W^{k,p}$  denotes the $k^{th}$ order $L^p$-Sobolev spaces with norm $\bm{W}^{k,p} = \left(W^{k,p}(\Omega_x)\right)^2,$  for all $k \geq 0,$ and  for$ 1 \leq p \leq \infty.$  When $p=\infty$,  it is defined in the usual manner.

For our subsequent use,  denote  $\bm{J}_0$ and $\bm{J}_1$ as a special class of divergence-free (in the sense of distributions) vector fields defined by
	\[
	\bm{J}_0 = \left\{\bm{w} \in \bm{L}^2: \nabla_x\cdot\bm{w} = 0, \bm{w} \,\, \mbox{is periodic}\right\},
	\]
	\[
	\bm{J}_1 = \left\{\bm{w} \in \bm{W}^{1,2}: \nabla_x\cdot \bm{w} = 0, \bm{w} \,\, \mbox{is periodic} \right\}.
	\] 
Throughout this paper, any function defined on $\Omega_x$ is assumed to be periodic in the $x$-variable.

\subsection{Qualitative and quantitative aspects of the model problem}

We begin this subsection by stating a result on $L^\infty$ estimate for the local density.  
\begin{lem}\label{lem:rho}
	For $\bm{u} \in L^1(0,T;\bm{L}^\infty),$  if  $\sup_{C^r_{t,v}}f_0 \in L^\infty_{loc}\left(\R_+;L^1(\R^d)\right)$, where $C^r_{t,v} := \Omega_x \times B(e^tv,r), \, \forall\, r > 0$ and $B(e^tv,r)$ denotes the ball centered at $e^tv$ having radius $r$, then, the following estimate holds:
	\begin{equation}\label{rholinf}
		\|\rho\|_{L^{\infty}((0,T]\times\Omega_x)} \leq e^{dT}\sup_{t \in [0,T]}\|\sup_{C^r_{t,v}}f_0\|_{L^1(\R^d)}.
	\end{equation}     	
\end{lem}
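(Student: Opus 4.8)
The plan is to represent the solution via the method of characteristics and then bound $\rho$ by integrating the representation formula in $v$. First I would rewrite the Vlasov equation in non-conservative form. Since $\bm{u}=\bm{u}(t,x)$ is independent of $v$ and $\nabla_v\cdot v = d$ on $\R^d$, expanding the velocity divergence gives
\[
\nabla_v\cdot\big((\bm{u} - v)f\big) = (\bm{u} - v)\cdot\nabla_v f - d\,f,
\]
so that \eqref{eq:continuous-model} becomes $\partial_t f + v\cdot\nabla_x f + (\bm{u}-v)\cdot\nabla_v f = d\,f$. The associated characteristics solve $\dot X = V$ and $\dot V = \bm{u}(s,X)-V$, along which $\tfrac{d}{ds}f(s,X(s),V(s)) = d\,f(s,X(s),V(s))$. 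Integrating this scalar ODE yields the pointwise representation $f(t,x,v)=e^{dt}\,f_0(X_0,V_0)$, where $(X_0,V_0)$ is the foot at time $0$ of the characteristic passing through $(x,v)$ at time $t$.

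Next I would control the backward velocity $V_0$. Multiplying the velocity equation by the integrating factor $e^s$ gives $\tfrac{d}{ds}\big(e^sV(s)\big)=e^s\bm{u}(s,X(s))$, and integrating from $0$ to $t$ yields
\[
V_0 = e^t v - \int_0^t e^s\,\bm{u}(s,X(s))\,{\rm d}s,
\qquad
\big|V_0 - e^t v\big| \le \int_0^t e^s\,\big|\bm{u}(s,X(s))\big|\,{\rm d}s \le e^T\,\|\bm{u}\|_{L^1(0,T;\bm{L}^\infty)}.
\]
This is precisely where the hypothesis $\bm{u}\in L^1(0,T;\bm{L}^\infty)$ enters: choosing $r$ at least as large as $e^T\|\bm{u}\|_{L^1(0,T;\bm{L}^\infty)}$ (which is admissible since the integrability of $\sup_{C^r_{t,v}}f_0$ is assumed for every $r>0$) forces the foot $(X_0,V_0)$ to lie in $C^r_{t,v}=\Omega_x\times B(e^tv,r)$. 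Note that the position component lies in $\Omega_x$ automatically by periodicity, so only the velocity deviation needs to be tracked, and hence $f_0(X_0,V_0)\le \sup_{C^r_{t,v}}f_0$.

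Finally I would combine the two ingredients, bounding the integrand pointwise (which sidesteps any Jacobian factor from a change of variables):
\[
\rho(t,x) = \int_{\R^d} f(t,x,v)\,{\rm d}v = e^{dt}\int_{\R^d} f_0(X_0,V_0)\,{\rm d}v \le e^{dt}\int_{\R^d}\Big(\sup_{C^r_{t,v}}f_0\Big)\,{\rm d}v = e^{dt}\,\Big\|\sup_{C^r_{t,v}}f_0\Big\|_{L^1(\R^d)}.
\]
Taking the supremum over $(t,x)\in(0,T]\times\Omega_x$ and using $e^{dt}\le e^{dT}$ then gives \eqref{rholinf}.

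The main obstacle I anticipate is the rigorous justification of the characteristic flow rather than the estimate itself: with only $\bm{u}\in L^1(0,T;\bm{L}^\infty)$ the field $\bm{u}(s,X)$ need not be Lipschitz in $x$, so existence and uniqueness of characteristics, and therefore the validity of the pointwise formula $f=e^{dt}f_0(X_0,V_0)$, are delicate and would require either extra regularity coming from the strong-solution framework or a mollification/DiPerna--Lions-type argument followed by passage to the limit. Granting a well-defined flow, the remaining computation is entirely elementary, the weight $e^s$ and the $L^1$-in-time, $L^\infty$-in-space control of $\bm{u}$ being exactly what is needed to confine $V_0$ to the ball $B(e^tv,r)$.
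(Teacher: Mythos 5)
Your proposal is correct and follows essentially the same route as the paper's proof: the method of characteristics with the growth factor $e^{dt}$ from the velocity divergence, the integrating factor $e^{s}$ to show the backward velocity stays within $e^{T}\|\bm{u}\|_{L^1(0,T;\bm{L}^\infty)}$ of $e^{t}v$, and then a pointwise bound of $f_0$ at the characteristic foot by $\sup_{C^r_{t,v}}f_0$ followed by integration in $v$. Your closing caveat about the well-posedness of the flow under the stated regularity is a fair observation, but it does not alter the fact that the estimate itself is obtained exactly as in the paper.
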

\begin{proof}
	The characteristic curves $(X,Y)$ for the equation \eqref{eq:continuous-model} are given by
	\begin{equation}\label{ttts}
		\left\{
		\begin{aligned}
			\dot X(s;t,x,v) &= Y(s;t,x,v); \quad X(t;t,x,v) = x,
			\\
			\dot Y(s;t,x,v) &= \bm{u}(s,X(s;t,x,v)) - Y(s;t,x,v); \quad Y(t;t,x,v) = v,
			\\
			\dot f(s;t,x,v) &= df(s;t,x,v); \quad f(t;t,x,v) = f_0.
		\end{aligned}
		\right.
	\end{equation}
	It is understood that the spatial characteristic curves are periodic. For $0 \leq s \leq t \leq T$, note that
	\begin{equation*}
		\begin{aligned}
			|e^tv - e^sY(s;t,x,v)| &\leq \int_s^te^\tau |\bm{u}(\tau,X(\tau;t,x,v))|\,{\rm d}\tau
			\\
			&\leq e^T\|\bm{u}\|_{L^1(0,T;\bm{L}^\infty)}.
		\end{aligned}
	\end{equation*}
	Therefore, $\left(X(0;t,x,v),Y(0;t,x,v)\right) \in C^r_{t,v}$, with $r = e^T\|\bm{u}\|_{L^1(0,T;\bm{L}^\infty)}$.\\
	From \eqref{ttts}, we obtain 
	\begin{equation*}\label{fexact}
		f(t,x,v) = e^{dt}f_0(X(0;t,x,v),Y(0;t,x,v)).
	\end{equation*}
	Hence,
	\begin{equation*}\label{rhoexact}
		\begin{aligned}
			\rho(t,x) = \int_{\R^d}f(t,x,v)\,{\rm d}v = e^{d t}\int_{\R^d}f_0(X(0;t,x,v),Y(0;t,x,v))\,{\rm d}v
		\end{aligned}
	\end{equation*}
	implying \eqref{rholinf}. This concludes the proof.
\end{proof}
The proof of the above lemma is inspired by \cite[Proposition 4.6, p.44]{han2019uniqueness}. In the following Lemma, we gather certain conservation properties of solutions $(f,\bm{u})$ to the Vlasov-Stokes system \eqref{eq:continuous-model}-\eqref{contstokes}, the proof of which can be found in \cite{Hamdache_1998}. Hence, we skip its proof.
\begin{lem}
	The following properties is satisfied by the solutions $(f,\bm{u})$ to the Vlasov-Stokes system:
	\begin{enumerate}
		\item \textbf{(Positivity preserving)} For any given non-negative initial data $f_0$, the solution $f$ remains non-negative.
		\item \textbf{(Mass conservation)} The total mass of the solution $f$ is conserved in the following sense:
		\begin{equation*}
			\int_{\R^d}\int_{\Omega_x}\,f(t,x,v)\,{\rm d}x\,{\rm d}v = \int_{\R^d}\int_{\Omega_x}\,f_0(x,v)\,{\rm d}x\,{\rm d}v, \quad t \in [0,T].
		\end{equation*}
	\item \textbf{(Total momentum conservation)} The total momentum of the solution pair $\left(f,\bm{u}\right)$ is preserved in the following sense: for all $t \in [0,T]$,
	\begin{equation*}
		\int_{\R^d}\int_{\Omega_x} vf(t,x,v)\,{\rm d}x\,{\rm d}v + \int_{\Omega_x} \bm{u}(t,x)\,{\rm d}x = \int_{\R^d}\int_{\Omega_x} vf_0(x,v)\,{\rm d}x\,{\rm d}v + \int_{\Omega_x}\bm{u_0}(x)\,{\rm d}x.
	\end{equation*}
	\end{enumerate}
\end{lem}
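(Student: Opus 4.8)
All three properties follow from the transport structure of \eqref{eq:continuous-model} together with the $x$-periodicity and the divergence-free constraint, so the plan is to read each one off from an appropriate integration of the Vlasov equation. For positivity I would simply reuse the characteristic representation obtained while proving Lemma \ref{lem:rho}: rewriting the flux term in advective form, $\nabla_v\cdot((\bm{u}-v)f) = (\bm{u}-v)\cdot\nabla_v f - d\,f$, the equation becomes $\dot f = d\,f$ along the characteristics \eqref{ttts}, whence $f(t,x,v) = e^{dt}f_0(X(0;t,x,v),Y(0;t,x,v))$. As $e^{dt} > 0$, non-negativity of $f_0$ is transported to $f$ for every $t \in [0,T]$.

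For mass conservation I would integrate \eqref{eq:continuous-model} over the whole phase space $\Omega_x\times\R^d$. The free-streaming term contributes $\int_{\R^d}\int_{\Omega_x} v\cdot\nabla_x f\,{\rm d}x\,{\rm d}v = \int_{\R^d}\int_{\Omega_x}\nabla_x\cdot(vf)\,{\rm d}x\,{\rm d}v = 0$ by periodicity in $x$, while the drag term $\int_{\Omega_x}\int_{\R^d}\nabla_v\cdot((\bm{u}-v)f)\,{\rm d}v\,{\rm d}x$ vanishes by the divergence theorem in $v$ once one knows that the flux $(\bm{u}-v)f$ decays as $|v|\to\infty$. Commuting the time derivative with the integral then gives $\frac{{\rm d}}{{\rm d}t}\int_{\R^d}\int_{\Omega_x} f\,{\rm d}x\,{\rm d}v = 0$.

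Total momentum conservation is the step that genuinely uses the coupling, and I expect it to require the most care. The plan is to take the first velocity moment of \eqref{eq:continuous-model}, i.e. to multiply by $v$ and integrate over $\Omega_x\times\R^d$: the free-streaming term again drops by periodicity, and an integration by parts in $v$ converts the drag term into $-\int_{\Omega_x}\int_{\R^d}(\bm{u}-v)f\,{\rm d}v\,{\rm d}x$, so that $\frac{{\rm d}}{{\rm d}t}\int_{\R^d}\int_{\Omega_x} vf\,{\rm d}x\,{\rm d}v = \int_{\Omega_x}\int_{\R^d}(\bm{u}-v)f\,{\rm d}v\,{\rm d}x$. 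Integrating the Stokes momentum equation in \eqref{contstokes} over $\Omega_x$ annihilates the viscous and pressure terms by periodicity and leaves $\frac{{\rm d}}{{\rm d}t}\int_{\Omega_x}\bm{u}\,{\rm d}x = \int_{\Omega_x}\int_{\R^d}(v-\bm{u})f\,{\rm d}v\,{\rm d}x$. Adding the two identities, the drag integrals are exact opposites and cancel, yielding conservation of $\int_{\R^d}\int_{\Omega_x} vf\,{\rm d}x\,{\rm d}v + \int_{\Omega_x}\bm{u}\,{\rm d}x$.

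The main obstacle is not the formal algebra but its justification: the vanishing of the $v$-boundary terms in the integrations by parts, and the legitimacy of differentiating under the integral sign, both hinge on sufficient decay and integrability of $f$ and of its first moment $m_1 f$ in the $v$ variable. These are precisely the higher moment bounds together with the regularity of $\bm{u}$ discussed in this section, which also underpin the hypotheses of Lemma \ref{lem:rho}; granting them, every contribution at $|v|=\infty$ vanishes and the three identities hold as stated.
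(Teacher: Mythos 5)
The paper gives no proof of this lemma at all: it states that the proof "can be found in \cite{Hamdache_1998}" and skips it, so there is no internal argument to compare yours against line by line. Your proposal is nonetheless correct, and it is the standard argument one would expect the cited reference to contain. Two points are worth noting. First, your positivity argument is not really a separate computation: the representation $f(t,x,v)=e^{dt}f_0\left(X(0;t,x,v),Y(0;t,x,v)\right)$ is exactly the formula the paper itself derives along the characteristics \eqref{ttts} in the proof of Lemma \ref{lem:rho}, so positivity comes for free from material already in the paper. Second, your mass and momentum computations are right as algebra — the free-streaming term dies by $x$-periodicity, the drag term in the zeroth moment dies by the divergence theorem in $v$, and in the first moment the kinetic drag contribution $\int_{\Omega_x}\int_{\R^d}(\bm{u}-v)f\,{\rm d}v\,{\rm d}x$ cancels exactly against the source $\int_{\Omega_x}\int_{\R^d}(v-\bm{u})f\,{\rm d}v\,{\rm d}x$ obtained by integrating \eqref{contstokes} over the torus (the viscous and pressure terms vanishing by periodicity). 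The technical caveat you flag — decay of $(\bm{u}-v)f$ and $v\otimes(\bm{u}-v)f$ as $|v|\to\infty$ and differentiation under the integral — is genuine but is covered by the solution framework of this section: the weighted moment bounds of Theorem \ref{ext:strong} and Lemma \ref{mkmmm} (with $k$ large and $p>3$) give more than enough $v$-integrability to kill the boundary terms and justify the exchanges of limits, so your argument closes.
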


The following Lemma gives the important energy-dissipation identity satisfied by the solutions to the Vlasov-Stokes system.
\begin{lem}\textbf{(Energy dissipation)}\label{lem:energy-dissipate}
	The total energy of the Vlasov-Stokes system \eqref{eq:continuous-model}-\eqref{contstokes} dissipates in time, i.e.,
	\begin{equation*}
		\frac{{\rm d}}{{\rm d}t}\left(\int_{\R^d}\int_{\Omega_x}|v|^2f(t,x,v)\,{\rm d}x\,{\rm d}v + \int_{\Omega_x}\bm{u}^2\,{\rm d}x \right) \leq 0,
	\end{equation*}
	provided $f$ is non-negative.
\end{lem}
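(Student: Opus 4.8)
The plan is to differentiate the total energy in time and to arrange the two equations so that the coupling (drag) terms recombine into a manifestly non-positive quantity. First I would compute the evolution of the kinetic energy by using $|v|^2$ as a multiplier in the Vlasov equation \eqref{eq:continuous-model}: multiplying by $|v|^2$ and integrating over $\Omega_x\times\R^d$ gives
\begin{equation*}
\frac{{\rm d}}{{\rm d}t}\int_{\R^d}\int_{\Omega_x}|v|^2 f\,{\rm d}x\,{\rm d}v = -\int_{\R^d}\int_{\Omega_x}|v|^2\big(v\cdot\nabla_x f\big)\,{\rm d}x\,{\rm d}v -\int_{\R^d}\int_{\Omega_x}|v|^2\,\nabla_v\cdot\big((\bm{u}-v)f\big)\,{\rm d}x\,{\rm d}v.
\end{equation*}
The transport term vanishes after an integration by parts in $x$ thanks to periodicity (the integrand is an exact $x$-divergence, $\nabla_x\cdot(|v|^2 v f)$), while an integration by parts in $v$ turns the second term into $\int_{\R^d}\int_{\Omega_x}2v\cdot(\bm{u}-v)f\,{\rm d}x\,{\rm d}v$, that is, $2\int\int(v\cdot\bm{u})f - 2\int\int|v|^2 f$.

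Next I would test the Stokes momentum equation \eqref{contstokes} with $\bm{u}$ and integrate over $\Omega_x$ to obtain the evolution of the fluid energy, using $\frac{{\rm d}}{{\rm d}t}\int_{\Omega_x}|\bm{u}|^2\,{\rm d}x = 2\int_{\Omega_x}\bm{u}\cdot\partial_t\bm{u}\,{\rm d}x$. Two standard cancellations occur here: the viscous term yields $-2\int_{\Omega_x}|\nabla_x\bm{u}|^2\,{\rm d}x$ after integration by parts in $x$, and the pressure term $\int_{\Omega_x}\bm{u}\cdot\nabla_x p\,{\rm d}x$ vanishes because $\nabla_x\cdot\bm{u}=0$ together with periodicity. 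The drag term contributes $2\int\int(\bm{u}\cdot v)f - 2\int\int|\bm{u}|^2 f$, after writing $\int_{\R^d}(v-\bm{u})f\,{\rm d}v = \int_{\R^d}vf\,{\rm d}v - \rho\,\bm{u}$ and recalling $\rho=\int_{\R^d}f\,{\rm d}v$.

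Adding the two identities, the cross terms of the form $(v\cdot\bm{u})f$, $|v|^2 f$ and $|\bm{u}|^2 f$ recombine into the perfect square $-2\int\int|v-\bm{u}|^2 f$, so that
\begin{equation*}
\frac{{\rm d}}{{\rm d}t}\Big(\int_{\R^d}\int_{\Omega_x}|v|^2 f\,{\rm d}x\,{\rm d}v + \int_{\Omega_x}|\bm{u}|^2\,{\rm d}x\Big) = -2\int_{\R^d}\int_{\Omega_x}|v-\bm{u}|^2 f\,{\rm d}x\,{\rm d}v - 2\int_{\Omega_x}|\nabla_x\bm{u}|^2\,{\rm d}x.
\end{equation*}
Since $f\geq 0$ by the positivity property, both terms on the right are non-positive, which yields the claimed dissipation (and in fact the sharper dissipation identity, not merely the inequality).

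The main technical point is justifying the integration by parts in the $v$ variable: the boundary contributions as $|v|\to\infty$ must vanish, which requires sufficient decay of $f$ together with finiteness of the moments $m_2 f$ and $m_3 f$. This is where velocity decay of the distribution enters, and I would discard these boundary terms by invoking the moment control, or equivalently the compactly supported initial data propagated along the characteristics as in Lemma \ref{lem:rho}; the remaining manipulations (the two integrations by parts in $x$ and the algebraic recombination into $|v-\bm{u}|^2$) are routine.
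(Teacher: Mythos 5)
Your proof is correct and follows essentially the same route as the paper: multiply the Vlasov equation by $|v|^2$ (the paper uses $\frac{|v|^2}{2}$, a harmless rescaling), test the Stokes equation with $\bm{u}$, integrate by parts, and recombine the coupling terms into the non-positive quantity $-2\int_{\R^d}\int_{\Omega_x}|\bm{u}-v|^2 f\,{\rm d}x\,{\rm d}v - 2\int_{\Omega_x}|\nabla_x\bm{u}|^2\,{\rm d}x$, using $f\geq 0$. The paper likewise records the resulting identity (its equation \eqref{ene}) rather than only the inequality, so your sharper statement matches as well.
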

\begin{proof}
	Multiplying equation \eqref{eq:continuous-model} by $\frac{|v|^2}{2}$, and equation \eqref{contstokes} by $\bm{u} \in \bm{J}_1$ followed by integration by parts, we obtain
	\begin{equation*}
		\begin{aligned}
			\int_{\R^d}\int_{\Omega_x} \partial_t\left(\frac{|v|^2}{2}f\right)\,{\rm d}x\,{\rm d}v &+ \frac{1}{2}\int_{\Omega_x}\partial_t\bm{u}^2\,{\rm d}x + \int_{\Omega_x}|\nabla_x\bm{u}|^2\,{\rm d}x - 2\int_{\R^d}\int_{\Omega_x} v\cdot\bm{u}f\,{\rm d}x\,{\rm d}v
			\\
			& + \int_{\R^d}\int_{\Omega_x}|v|^2f\,{\rm d}x\,{\rm d}v + \int_{\R^d}\int_{\Omega_x}|\bm{u}|^2f\,{\rm d}x\,{\rm d}v = 0.
		\end{aligned}
	\end{equation*}
	The above equality is the same as
	\begin{equation*}
		\frac{{\rm d}}{{\rm d}t}\left( \int_{\R^d}\int_{\Omega_x} |v|^2\;f \,{\rm d}x\,{\rm d}v + \int_{\Omega_x}\bm{u}^2 {\rm d}x \right) + 2 \int_{\Omega_x}|\nabla_x\bm{u}|^2\,{\rm d}x + 2 \int_{\R^d}\int_{\Omega_x}|\bm{u} - v|^2f\,{\rm  d}x\,{\rm d}v = 0.
	\end{equation*}
	As the third term is non-negative and as the last term is non-negative because $f$ is non-negative, we drop them to obtain the desired result. This completes the proof.
\end{proof}
As a consequence of above lemma, we also obtain the following identity:
\begin{equation}\label{ene}
	\begin{aligned}
		\frac{1}{2}&\left(\int_{\R^d}\int_{\Omega_x} |v|^2f(t,x,v)\,{\rm d}x\,{\rm d}v  + \int_{\Omega_x}\bm{u}^2\,{\rm d}x\right) + \int_0^t\int_{\Omega_x}|\nabla_{x}\bm{u}|^2\,{\rm d}x\,{\rm d}t 
		\\
		&+ \int_0^t\int_{\R^d}\int_{\Omega_x}|\bm{u} - v|^2f\,{\rm d}x\,{\rm d}v\,{\rm d}t = \frac{1}{2}\int_{\R^d}\int_{\Omega_x} |v|^2\,f_0\,{\rm d}x\,{\rm d}v + \frac{1}{2}\int_{\Omega_x}\bm{u_0}^2\,{\rm d}x.
	\end{aligned}  
\end{equation}
Hence, we deduce that
\begin{equation}\label{uL2}
	\bm{u} \in L^\infty(0,T;\bm{L}^2) \quad \mbox{and} \quad \bm{u} \in L^2(0,T;\bm{J}_1)
\end{equation}
provided $|v|^2f_0 \in L^1(\Omega_x \times \R^d)$ and $\bm{u_0} \in \bm{L}^2$.\\
Recall by Sobolev imbedding $H^1(\Omega_x) \hookrightarrow   L^p(\Omega_x)$. Here, when $d = 2, ~  p \in [2,\infty)$ and for $d = 3,~  p \in [2,6]$:
\begin{equation}\label{uLp}
	\bm{u} \in L^2(0,T;\bm{L}^p).
\end{equation}
Below, we state a result on the integrability of the local density and local momentum for which proof can be found in \cite[Lemma 2.2, p.56]{Hamdache_1998}. These results are essential to obtain the regularity of the solutions to the Stokes equations.
\begin{lem}\label{density}
	For $p \geq 1$, let $\bm{u} \in L^2(0,T;\bm{L}^{p+d}), f_0 \in L^\infty(\Omega_x \times \R^d)\cap L^1(\Omega_x \times \R^d)$ and 
	\[
	\int_{\R^d}\int_{\Omega_x}\,|v|^pf_0\,{\rm d}x{\rm d}v <\infty.
	\] 
 Then, the local density $\rho$ and the local momentum $\rho V$ satisfy the following:
	\begin{equation}\label{rhos}
		\rho \in L^\infty\left(0,T;L^\frac{p+d}{d}(\Omega_x)\right) \quad \mbox{and} \quad \rho V \in L^\infty\left(0,T; L^\frac{p+d}{d+1}(\Omega_x)\right).
	\end{equation}
\end{lem}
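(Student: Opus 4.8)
The plan is to reduce both bounds in \eqref{rhos} to two ingredients: a uniform-in-time $L^\infty$ bound on $f$ and a uniform-in-time bound on the global $p$-th moment $M_p(t) := \int_{\Omega_x}\int_{\R^d}|v|^p f\,{\rm d}v\,{\rm d}x$. For the first, I would reuse the representation along characteristics already obtained in the proof of Lemma~\ref{lem:rho}, namely $f(t,x,v) = e^{dt} f_0(X(0;t,x,v),Y(0;t,x,v))$, which immediately gives $\|f(t)\|_{L^\infty} \le e^{dT}\|f_0\|_{L^\infty}$ for $t\in[0,T]$. Granting these two bounds, the density and momentum estimates follow from a single interpolation inequality for velocity moments, so the core of the work is (i) that interpolation inequality and (ii) the propagation of $M_p$.

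Step (i) is a pointwise-in-$(t,x)$ interpolation for moments. For $0\le l\le p$ I would split $m_l f(x) = \int_{|v|\le R} |v|^l f\,{\rm d}v + \int_{|v|>R}|v|^l f\,{\rm d}v$, bound the first integral by $\|f\|_{L^\infty}\int_{|v|\le R}|v|^l\,{\rm d}v \le C\|f\|_{L^\infty}R^{l+d}$ and the second by $R^{l-p}\,m_p f(x)$, and then optimise over $R>0$. The optimal choice $R^{d+p}\sim m_p f/\|f\|_{L^\infty}$ yields
\begin{equation*}
m_l f(x) \le C\,\|f\|_{L^\infty}^{(p-l)/(d+p)}\,\big(m_p f(x)\big)^{(l+d)/(d+p)}, \qquad 0\le l\le p.
\end{equation*}

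Step (ii) is the propagation of $M_p$. Multiplying \eqref{eq:continuous-model} by $|v|^p$ and integrating over $\Omega_x\times\R^d$, the transport term $\int v\cdot\nabla_x(|v|^p f)$ vanishes by $x$-periodicity, and integrating the drag term by parts in $v$ (the boundary terms at infinity being removed by a standard truncation argument justified by the finiteness of the $p$-th moment) gives
\begin{equation*}
\frac{{\rm d}}{{\rm d}t}M_p(t) = p\int_{\Omega_x}\int_{\R^d}|v|^{p-2}(v\cdot\bm{u})f\,{\rm d}v\,{\rm d}x - p\,M_p(t).
\end{equation*}
Dropping the dissipative term $-pM_p$ and bounding the first term by $p\int_{\Omega_x}|\bm{u}|\,m_{p-1}f\,{\rm d}x$, I would apply Hölder with the conjugate pair $\big(p+d,\tfrac{p+d}{p+d-1}\big)$ and then the interpolation of Step (i) with $l=p-1$. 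The dual exponent $q=\tfrac{p+d}{p+d-1}$ is chosen precisely so that the power of $m_pf$ integrates to one, producing
\begin{equation*}
\frac{{\rm d}}{{\rm d}t}M_p(t) \le C\,\|\bm{u}(t)\|_{\bm{L}^{p+d}}\,\|f(t)\|_{L^\infty}^{1/(d+p)}\, M_p(t)^{(d+p-1)/(d+p)}.
\end{equation*}
This is a Bernoulli-type inequality $M_p' \le C(t)\,M_p^{\alpha}$ with $\alpha = (d+p-1)/(d+p)<1$; since $1-\alpha = 1/(d+p)$, integrating $\big(M_p^{1-\alpha}\big)' \le (1-\alpha)\,C(t)$ and using $\int_0^T\|\bm{u}\|_{\bm{L}^{p+d}}\,{\rm d}t \le T^{1/2}\|\bm{u}\|_{L^2(0,T;\bm{L}^{p+d})}<\infty$ shows $M_p\in L^\infty(0,T)$.

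Finally, with $\|f\|_{L^\infty}$ and $M_p$ bounded on $[0,T]$, the density bound follows by raising the interpolation inequality with $l=0$ to the power $\tfrac{p+d}{d}$ and integrating in $x$, the power of $m_p f$ collapsing to $1$ so that the $x$-integral reduces to $M_p$; similarly $|\rho V|\le m_1 f$ together with the case $l=1$ raised to the power $\tfrac{p+d}{d+1}$ yields the momentum bound. I expect the main difficulty to be Step (ii): closing the moment estimate in a self-consistent way, that is, matching the Hölder exponent to the moment interpolation so that $m_pf$ appears to the first power, and then verifying that the resulting nonlinear Grönwall inequality is integrable under only the hypothesis $\bm{u}\in L^2(0,T;\bm{L}^{p+d})$.
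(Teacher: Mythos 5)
Your proof is correct, and it is essentially the argument behind the proof the paper points to: the paper does not prove Lemma \ref{density} itself but cites \cite[Lemma 2.2]{Hamdache_1998}, and the standard proof there is exactly your combination of the pointwise moment interpolation $m_l f \le C\,\|f\|_{L^\infty}^{(p-l)/(d+p)}\,(m_p f)^{(l+d)/(d+p)}$ with a Bernoulli-type differential inequality for $M_p$, where the H\"older pair $\bigl(p+d,\tfrac{p+d}{p+d-1}\bigr)$ is chosen so that the power of $m_pf$ integrates to $M_p$ itself. One caveat worth fixing: under the hypotheses of the lemma alone, $\bm{u}\in L^2(0,T;\bm{L}^{p+d})$ is not enough regularity for the characteristic curves of Lemma \ref{lem:rho} to be classically defined, so the uniform bound $\|f(t)\|_{L^\infty}\le e^{dT}\|f_0\|_{L^\infty}$ should not be justified by the ODE representation; instead it should be invoked as an a priori property of the weak solutions under consideration, which follows from the transport structure $\nabla_v\cdot(\bm{u}-v)=-d$ by renormalization or approximation, and is part of the weak-solution framework of \cite{Hamdache_1998}. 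With that substitution, your two steps close exactly as written and yield \eqref{rhos}.
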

\begin{rem}
   	  By choosing $p = 5$ in Lemma \ref{density} for $d = 2$, we obtain 
   	  \begin{equation}\label{rhos1}
   	  	\rho \in L^\infty\left(0,T;L^\frac{7}{2}(\Omega_x)\right) \quad \mbox{and} \quad \rho V \in L^\infty\left(0,T; L^\frac{7}{3}(\Omega_x)\right).
   	  \end{equation}
   	\end{rem}
   	\begin{rem}
   		 For $d = 3$, $p = 3$ in Lemma \ref{density} yields 
   		\begin{equation}\label{rhos1}
   			\rho \in L^\infty\left(0,T;L^2(\Omega_x)\right) \quad \mbox{and} \quad \rho V \in L^\infty\left(0,T; L^\frac{3}{2}(\Omega_x)\right).
   		\end{equation}
   		An application of the Stokes regularity results \cite{giga1991abstract,su2023hydrodynamic} yields
   		\begin{equation}
   			\bm{u} \in L^2(0,T;\bm{W}^{2,\frac{3}{2}}).
   		\end{equation}
   		An use of the Sobolev inequality shows
   		\begin{equation}
   			\bm{u} \in L^2(0,T;\bm{L}^p) \quad \mbox{for}\quad \frac{3}{2} \leq p < \infty.
   		\end{equation}
   			 Taking $p = 5$ in Lemma \ref{density}, we obtain 
   			\begin{equation}\label{rhos1}
   				\rho \in L^\infty\left(0,T;L^\frac{8}{3}(\Omega_x)\right) \quad \mbox{and} \quad \rho V \in L^\infty\left(0,T; L^2(\Omega_x)\right).
   			\end{equation}
   			A use of the Stokes regularity result yields
   			\begin{equation}
   				\bm{u} \in H^1(0,T;\bm{L}^2)\cap L^2(0,T;\bm{H}^2)\cap L^\infty(0,T;\bm{H}^1).
   			\end{equation}
   			Setting $p = 9+\delta$ with $\delta > 0$ in Lemma \ref{density} shows 
   			\begin{equation}\label{rhos1}
   				\rho \in L^\infty\left(0,T;L^\frac{12+\delta}{3}(\Omega_x)\right) \quad \mbox{and} \quad \rho V \in L^\infty\left(0,T; L^\frac{12+\delta}{4}(\Omega_x)\right).
   			\end{equation}
   		\end{rem}
    These integrability properties of $\rho$ and $\rho V$ in the Stokes equation \eqref{contstokes} guarantees that $\bm{u} \in L^2(0,T;\bm{W}^{1,\infty})$ which is important for our proof of existence of a unique strong solution.
   
Below, we state and prove a result on the propagation of velocity moments.
\begin{lem}\label{mkmmm}
  	Let $\bm{u} \in L^1(0,T;\bm{W}^{1,\infty})$ and let $f_0\ge0$ be such that
  	\[
  	\int_{\Omega_x}\int_{\R^3}\langle v\rangle^{kp}\{ \vert f_0\vert^p + |\nabla_xf_0|^p + |\nabla_vf_0|^p\}\,{\rm d}v\,{\rm d}x \leq C,
  	\]
  	for $k \geq 0$ and $\langle v\rangle = \left(1 + \vert v\vert^2\right)^\frac{1}{2}$. Then, for $k \geq 0$ the solution $f$ of the kinetic equation satisfies
  	\[
  	\int_{\Omega_x}\int_{\R^3}\langle v\rangle^{kp}\{\vert f\vert^p + |\nabla_xf|^p + |\nabla_vf|^p\}\,{\rm d}v\,{\rm d}x \leq C \quad \forall ~ t > 0.
  	\]
	  
\end{lem}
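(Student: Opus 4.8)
The plan is to put the kinetic equation in non-conservative form and then derive weighted $L^p$ energy estimates for $f$ and its first derivatives \emph{simultaneously}, closing the argument with a single linear Grönwall inequality. Since $\bm{u}$ is independent of $v$, expanding the divergence gives $\nabla_v\cdot((\bm{u}-v)f) = (\bm{u}-v)\cdot\nabla_v f - d f$, so \eqref{eq:continuous-model} reads $\partial_t f + v\cdot\nabla_x f + (\bm{u}-v)\cdot\nabla_v f = d f$, a transport equation with a zeroth-order source. I would first treat the plain zeroth-order bound: testing against $p\langle v\rangle^{kp} f^{p-1}$ (legitimate since $f\ge0$ by positivity preservation) and integrating by parts, the $x$-transport term drops because $v$ is $x$-independent and $\Omega_x$ is periodic, while the velocity drift produces, after one integration by parts in $v$, the factor $\nabla_v\cdot(\langle v\rangle^{kp}(\bm{u}-v)) = kp\langle v\rangle^{kp-2} v\cdot(\bm{u}-v) - d\langle v\rangle^{kp}$. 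The only term needing care is $kp\langle v\rangle^{kp-2} v\cdot\bm{u}$, which is bounded by $kp\|\bm{u}(t)\|_{\bm{L}^\infty}\langle v\rangle^{kp}$ since $\langle v\rangle\ge1$, whereas the $-kp\langle v\rangle^{kp-2}|v|^2$ contribution is dissipative and may be discarded. This yields $\frac{d}{dt}\int\langle v\rangle^{kp}|f|^p \le \big(kp\|\bm{u}(t)\|_{\bm{L}^\infty} + d(p-1)\big)\int\langle v\rangle^{kp}|f|^p$.

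Next I would differentiate the equation to obtain a closed first-order system. Writing $g_i = \partial_{x_i} f$ and $h_i = \partial_{v_i} f$, differentiation in $x_i$ and $v_i$ commutes with the transport operator and produces the same operator on the left together with the lower-order couplings
\begin{equation*}
\partial_t g_i + v\cdot\nabla_x g_i + (\bm{u}-v)\cdot\nabla_v g_i = d\,g_i - (\partial_{x_i}\bm{u})\cdot\nabla_v f,
\end{equation*}
\begin{equation*}
\partial_t h_i + v\cdot\nabla_x h_i + (\bm{u}-v)\cdot\nabla_v h_i = (d+1)\,h_i - g_i,
\end{equation*}
where the extra $+h_i$ and $-g_i$ in the second line arise from differentiating the $-v\cdot\nabla_v f$ part of the drift and the free-transport term $v\cdot\nabla_x f$, respectively. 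Running the same weighted $L^p$ computation against $p\langle v\rangle^{kp}|g_i|^{p-2} g_i$ and $p\langle v\rangle^{kp}|h_i|^{p-2} h_i$ reproduces exactly the transport/weight terms estimated above, and the new source terms are absorbed by Young's inequality: $|(\partial_{x_i}\bm{u})\cdot\nabla_v f|$ is controlled by $\|\nabla_x\bm{u}(t)\|_{\bm{L}^\infty}(\sum_j|h_j|)$, and $|g_i|$ by itself. Hence every right-hand side is bounded by a constant (depending on $d,p,k,\|\bm{u}(t)\|_{\bm{W}^{1,\infty}}$) times $\int\langle v\rangle^{kp}(|f|^p+\sum_i|g_i|^p+\sum_i|h_i|^p)$.

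Summing the three families of estimates and setting $N(t) = \int_{\Omega_x}\int_{\R^d}\langle v\rangle^{kp}\big(|f|^p + |\nabla_x f|^p + |\nabla_v f|^p\big)$, I obtain $\frac{d}{dt}N(t)\le C(t)N(t)$ with $\int_0^T C(t)\,{\rm d}t<\infty$ because $\bm{u}\in L^1(0,T;\bm{W}^{1,\infty})$. Grönwall's inequality then gives $N(t)\le N(0)\exp\!\big(\int_0^t C(s)\,{\rm d}s\big)$, which is the claimed bound.

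The main obstacle I anticipate is precisely the coupling between $\nabla_x f$ and $\nabla_v f$: because differentiating the free-transport term $v\cdot\nabla_x f$ in $v_i$ produces $\partial_{x_i} f$, the two derivative families cannot be estimated in isolation and must be summed before invoking Grönwall. A secondary technical point is justifying the differentiation under the integral sign and the integrations by parts in $v$; this is clean for smooth, compactly supported initial data, for which $f(t,\cdot,\cdot)$ stays smooth with rapid decay in $v$ so that all boundary terms vanish, and the general case follows by a standard approximation argument.
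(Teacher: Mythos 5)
Your proposal is correct and follows essentially the same route as the paper: differentiate the kinetic equation in $x$ and in $v$, perform weighted $L^p$ energy estimates with the weight $\langle v\rangle^{kp}$, absorb the cross-couplings ($\nabla_x\bm{u}\cdot\nabla_v f$ in the $x$-derivative equation, and the $\nabla_x f$ term produced by differentiating $v\cdot\nabla_x f$ in $v$) via Young's inequality, sum the three families of estimates, and close with Grönwall using $\bm{u}\in L^1(0,T;\bm{W}^{1,\infty})$. The only cosmetic differences are that you work in non-conservative form and explicitly include the zeroth-order bound on $|f|^p$, which the paper leaves implicit.
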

   
   \begin{proof}
   	Consider the equation for $\frac{\partial f}{\partial x_i}$:
   	\begin{equation*}
   		\partial_t\frac{\partial f}{\partial x_i} + v\cdot \nabla_x\frac{\partial f}{\partial x_i} + \nabla_v\cdot \left(\frac{\partial {\bm u}}{\partial x_i} f\right) + \nabla_v\cdot\left(\left(\bm{u} - v\right)\frac{\partial f}{\partial x_i}\right) = 0,
   	\end{equation*}
   	for $i=1,2,3$. Multiplying the above vector equation by $\langle v\rangle^{kp}\vert\nabla_xf\vert^{p-1}$, an integration with respect to $x,v$ yields
   	\begin{equation*}
   		\begin{aligned}
   			\frac{1}{p}\frac{{\rm d}}{{\rm d}t}\int_{\Omega_x}\int_{\R^3}\langle v\rangle^{kp}|\nabla_xf|^p\,{\rm d}v\,{\rm d}x = I_1 + I_2 + I_3,
   		\end{aligned}
   	\end{equation*} 
   	where
   	\begin{align*}
   		I_1 & = -\int_{\Omega_x}\int_{\R^3}\langle v\rangle^{kp}\vert\nabla_xf\vert^{p-1}\nabla_x\bm{u}\cdot \nabla_vf\,{\rm d}v\,{\rm d}x,
   		\\
   		I_2 & =  3\int_{\Omega_x}\int_{\R^3}\langle v\rangle^{kp}|\nabla_xf|^p\,{\rm d}v\,{\rm d}x,
   		\\
   		I_3 & = -\frac{1}{p}\int_{\Omega_x}\int_{\R^3}\langle v\rangle^{kp}\left(\bm{u} - v\right)\cdot\nabla_v\left(|\nabla_xf|^p\right)\,{\rm d}v\,{\rm d}x.
   	\end{align*}
   	After using Young's inequality in $I_1$, we obtain
   	\begin{equation*}
   		\begin{aligned}
   			I_1 \le \|\nabla_x\bm{u}\|_{\bm{L}^\infty}\int_{\Omega_x}\int_{\R^3}\langle v\rangle^{kp}\left(\frac{p-1}{p}|\nabla_xf|^p + \frac{1}{p}|\nabla_vf|^p\right)\,{\rm d}v\,{\rm d}x.
   		\end{aligned}
   	\end{equation*}
   	An integration by parts yields
   	\begin{equation*}
   		\begin{aligned}
   			I_3 = -\frac{3}{p}\int_{\Omega_x}\int_{\R^3}\langle v\rangle^{kp}|\nabla_xf|^p\,{\rm d}v\,{\rm d}x + I_4
   		\end{aligned}
   	\end{equation*}
   	with
   	\begin{equation*}
   		I_4 =  k\int_{\Omega_x}\int_{\R^3}\,\langle v\rangle^{kp-2}v\cdot\left(\bm{u} - v\right)|\nabla_xf|^p\,{\rm d}v\,{\rm d}x.
   	\end{equation*}
   	A use of the Young's inequality shows
   	\begin{equation*}
   		\begin{aligned}
   			I_4 &\leq k\|\bm{u}\|_{\bm{L}^\infty}\int_{\Omega_x}\int_{\R^3}\,\langle v\rangle^{kp-1}|\nabla_xf|^p\,{\rm d}v\,{\rm d}x + k\int_{\Omega_x}\int_{\R^3}\,\langle v\rangle^{kp}|\nabla_xf|^p\,{\rm d}v\,{\rm d}x
   			\\
   			&\leq C\left(1 + \|\bm{u}\|_{\bm{L}^\infty}\right)\int_{\Omega_x}\int_{\R^3}\langle v\rangle^{kp}|\nabla_xf|^p\,{\rm d}v\,{\rm d}x.
   		\end{aligned}
   	\end{equation*}
   	A similar computation involving the equation for $\nabla_v f$ yields
   	\begin{equation*}
   		\begin{aligned}
   			\frac{1}{p}\frac{{\rm d}}{{\rm d}t}\int_{\Omega_x}\int_{\R^3}\langle v\rangle^{kp}|\nabla_vf|^p\,{\rm d}v\,{\rm d}x & \le C\int_{\Omega_x}\int_{\R^3}\langle v\rangle^{kp}\left(|\nabla_xf|^p + |\nabla_vf|^p\right)\,{\rm d}v\,{\rm d}x
   			\\
   			& + C\left(1 + \|\bm{u}\|_{\bm{L}^\infty}\right)\int_{\Omega_x}\int_{\R^3}\langle v\rangle^{kp}|\nabla_vf|^p\,{\rm d}v\,{\rm d}x. 
   		\end{aligned}
   	\end{equation*}
   	Altogether, we obtain
   	\begin{equation*}
   		\begin{aligned}
   			\frac{{\rm d}}{{\rm d}t}&\left(\int_{\Omega_x}\int_{\R^3}\langle v\rangle^{kp}\left(|\nabla_xf|^p + |\nabla_vf|^p\right)\,{\rm d}v\,{\rm d}x\right) \leq C\left(1 + \|\bm{u}\|_{\bm{W}^{1,\infty}}\right)
   			\\
   			&\qquad\qquad\qquad\qquad\qquad\qquad\qquad\int_{\Omega_x}\int_{\R^3}\langle v\rangle^{kp}\left(|\nabla_xf|^p + |\nabla_vf|^p\right)\,{\rm d}v\,{\rm d}x.
   		\end{aligned}
   	\end{equation*}
   	A use of Gr\"onwall's inequality  completes the rest of the proof.
   \end{proof}

\subsection{Existence and Uniqueness result in $3$D}
This subsection discusses briefly the existence and uniqueness result for the strong solution to \eqref{eq:continuous-model}-\eqref{contstokes} the continuum model.

Below, we stated the result on the existence of strong solution to the Vlasov-Stokes equation whose proof can be found in \cite[Theorem 5, p. 2435]{chae2011global}.
\begin{thm}\textbf{(Existence of strong solution)}\label{ext:strong}
	Let the initial data $(f_0,\bm{u_0})$ be such that
	\[
	f_0 \in L^1(\Omega_x\times\R^3)\cap L^\infty(\Omega_x\times\R^3), \quad f_0\ge0,
	\]
	\[
	\int_{\Omega_x}\int_{\R^3}\langle v\rangle^{kp}\left\{ \vert f_0\vert^p + \vert \nabla_xf_0\vert^p + \vert \nabla_vf_0\vert^p \right\}\,{\rm d}v\,{\rm d}x \leq C,
	\]
	for $p \in (3,\infty), \quad k > 4 - \frac{3}{p}$ and $\bm{u_0}\in \bm{W}^{1,p}\cap\bm{J}_1$. 
	Then, there exists a global-in-time strong solution $(f,\bm{u},p)$ to the Vlasov-Stokes system \eqref{eq:continuous-model}-\eqref{contstokes}. Furthermore, 
	\[
	\int_{\Omega_x}\int_{\R^3}\langle v\rangle^{kp}\left\{ \vert f\vert^p + \vert \nabla_xf\vert^p + \vert \nabla_vf\vert^p \right\}\,{\rm d}v\,{\rm d}x \leq C,
	\]
	and 
	\[
	\bm{u} \in \bm{L}^r(0,T;\bm{W}^{2,q})\cap H^1(0,T;\bm{L}^q)
	\]
	for $q < p$ and $r \in (1,\infty)$. Here, $\langle  v\rangle = \left(1 + \vert v\vert^2\right)^\frac{1}{2}$.
\end{thm}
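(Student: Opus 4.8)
The plan is to construct the strong solution by a Picard-type iteration that decouples the transport and fluid equations, and then to close a family of a priori estimates strong enough to pass to the limit. Starting from $\bm{u}^{0}=\bm{u_0}$, I would define the iterates by alternately solving two linear problems: given the fluid field $\bm{u}^{n}$, solve the linear transport equation
\begin{equation*}
\partial_t f^{n+1} + v\cdot \nabla_x f^{n+1} + \nabla_v \cdot \big( (\bm{u}^{n} - v) f^{n+1} \big) = 0, \qquad f^{n+1}(0) = f_0,
\end{equation*}
by the method of characteristics (precisely the setting of Lemma \ref{lem:rho} and Lemma \ref{mkmmm}), and then, using the source $\int_{\R^3}(v - \bm{u}^{n})f^{n+1}\,{\rm d}v = \rho^{n+1} V^{n+1} - \rho^{n+1}\bm{u}^{n}$, solve the linear unsteady Stokes problem for $(\bm{u}^{n+1},p^{n+1})$. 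The solution is obtained as the limit of this sequence once uniform bounds and compactness are in hand.

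The heart of the argument is a closed family of a priori estimates, uniform in $n$. The weighted Sobolev bound on $f^{n+1}$ is propagated by Lemma \ref{mkmmm}, whose hypothesis is exactly $\bm{u}^{n}\in L^1(0,T;\bm{W}^{1,\infty})$; together with the energy identity \eqref{ene}, the $L^\infty$ density bound \eqref{rholinf}, and Lemma \ref{density} this controls $f^{n+1}$ and its moments. The threshold $k > 4 - \tfrac{3}{p}$ enters through a Hölder estimate in $v$: with $p' = p/(p-1)$,
\[
\int_{\R^3} |v|\, f^{n+1}\,{\rm d}v \le \Big(\int_{\R^3}\langle v\rangle^{kp}|f^{n+1}|^p\,{\rm d}v\Big)^{1/p}\Big(\int_{\R^3}|v|^{p'}\langle v\rangle^{-kp'}\,{\rm d}v\Big)^{1/p'},
\]
and the last factor is finite precisely when $k > 4 - \tfrac3p$. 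Hence the Stokes source $\rho^{n+1}V^{n+1} - \rho^{n+1}\bm{u}^{n}$ lies in $\bm{L}^{p}$ with $p>3$. Feeding this forcing into the maximal $L^{r}$--$L^{q}$ regularity theory for the unsteady Stokes operator \cite{giga1991abstract,su2023hydrodynamic} yields $\bm{u}^{n+1}\in \bm{L}^{r}(0,T;\bm{W}^{2,q})\cap H^1(0,T;\bm{L}^{q})$, and since $p>3$ the Sobolev embedding $\bm{W}^{2,p}\hookrightarrow\bm{W}^{1,\infty}$ in three dimensions returns $\bm{u}^{n+1}\in L^{1}(0,T;\bm{W}^{1,\infty})$. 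This is exactly the regularity needed to reapply Lemma \ref{mkmmm}, so the estimate is self-improving.

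The main obstacle is closing this loop consistently, since Lemma \ref{mkmmm} needs $\bm{u}\in L^1(0,T;\bm{W}^{1,\infty})$ while that regularity is only produced after the moments are controlled. I would break the circularity by a short-time contraction argument: on a sufficiently small interval $[0,T^{*}]$ the map $\bm{u}^{n}\mapsto\bm{u}^{n+1}$ is a contraction in a suitable complete space, using the Lipschitz dependence of the characteristics \eqref{ttts} on $\bm{u}$ together with the boundedness of the Stokes solution operator, which gives a unique local strong solution. To globalize, I would invoke the a priori bounds that do not deteriorate with $T^{*}$ — the energy dissipation \eqref{ene}, the moment propagation of Lemma \ref{mkmmm} closed by Grönwall's inequality, and \eqref{rholinf} — which forbid finite-time blow-up of $\|\bm{u}(t)\|_{\bm{W}^{1,\infty}}$, so the local solution extends to an arbitrary $[0,T]$.

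Finally, passing to the limit $n\to\infty$ is by compactness: the uniform-in-$n$ bounds furnish weak-$*$ limits $f^{n}\rightharpoonup f$ and $\bm{u}^{n}\rightharpoonup\bm{u}$, while the Stokes regularity with an Aubin--Lions argument (or the contraction estimate itself) gives strong convergence of $\bm{u}^{n}$ in $L^1(0,T;\bm{L}^{\infty})$, which suffices to pass to the limit in the nonlinear drag terms. The propagated weighted bound is inherited by $f$ through weak lower semicontinuity, giving the stated estimate for $f$, and the maximal regularity estimate delivers $\bm{u}\in\bm{L}^r(0,T;\bm{W}^{2,q})\cap H^1(0,T;\bm{L}^q)$ for $q<p$ and $r\in(1,\infty)$.
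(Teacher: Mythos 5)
Before anything else, note that the paper does not actually prove Theorem \ref{ext:strong}: it is quoted from \cite[Theorem 5, p.~2435]{chae2011global}, and the text preceding the statement says exactly this. So there is no internal proof to compare you against, and your proposal has to be judged on its own merits. Its architecture (linear iteration alternating the Vlasov and Stokes solves, weighted moment propagation, maximal $L^r$--$L^q$ Stokes regularity, compactness in the limit) is the natural one, and your H\"older computation identifying the threshold $k>4-\tfrac{3}{p}$ as the condition making $\int_{\R^3}|v|^{p'}\langle v\rangle^{-kp'}\,{\rm d}v$ finite is correct. The local-in-time contraction argument is also fine in principle.

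The genuine gap is in the globalization. You close the estimates by feeding Lemma \ref{mkmmm}, whose Gr\"onwall bound is of size $\exp\bigl(C\int_0^t(1+\|\bm{u}\|_{\bm{W}^{1,\infty}})\,{\rm d}s\bigr)$, into the Stokes maximal regularity estimate, whose bound on $\|\bm{u}\|_{L^1(0,t;\bm{W}^{1,\infty})}$ is controlled by a power of those same weighted moments. Setting $y(t)=\int_0^t\|\bm{u}\|_{\bm{W}^{1,\infty}}\,{\rm d}s$, the combined inequality has the self-referential form $y(t)\le A+Be^{cy(t)}$, and the set of $y$ satisfying such an inequality is unbounded (the right-hand side dominates for large $y$), so by itself it does not ``forbid finite-time blow-up''; a continuity argument salvages it only under a smallness condition on the constants, i.e.\ for small data or short time, not for the claimed global result. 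The circularity you broke locally by contraction thus reappears globally, where contraction is no longer available. The way out is the one the paper itself sketches in the remark following Lemma \ref{density}: first propagate the \emph{underivated} velocity moments $m_pf$ via Hamdache's lemma (Lemma \ref{density}), which needs only the energy-level regularity $\bm{u}\in L^2(0,T;\bm{L}^{p+d})$ supplied by \eqref{ene} and Sobolev embedding; bootstrap through Stokes regularity ($p=3$ gives $\bm{u}\in L^2(0,T;\bm{W}^{2,3/2})$, then $p=5$ gives $\bm{u}\in H^1(0,T;\bm{L}^2)\cap L^2(0,T;\bm{H}^2)$, then $p=9+\delta$ gives $\bm{u}\in L^2(0,T;\bm{W}^{1,\infty})$) so that $\|\bm{u}\|_{L^1(0,T;\bm{W}^{1,\infty})}$ is bounded a priori in terms of the data and $T$ alone; and only then apply Lemma \ref{mkmmm}, whose Gr\"onwall exponent is now a controlled quantity. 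With that reordering your scheme can be closed; as written, the global step fails.
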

The following theorem is on uniqueness of the strong solution to \eqref{eq:continuous-model}-\eqref{contstokes}. 
\begin{thm}\textbf{(Uniqueness of strong solution)}\label{thm:exist-strong}
	Under the hypothesis of Theorem \ref{ext:strong} there is atmost one strong solution $(f,\bm{u},p)$ to \eqref{eq:continuous-model}-\eqref{contstokes}.	
\end{thm}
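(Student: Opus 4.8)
The plan is to assume there are two strong solutions $(f_1,\bm{u}_1,p_1)$ and $(f_2,\bm{u}_2,p_2)$ issued from the same data $(f_0,\bm{u_0})$ and to show their difference vanishes. Set $\bar f = f_1 - f_2$, $\bar{\bm{u}} = \bm{u}_1 - \bm{u}_2$ and $\bar p = p_1 - p_2$. Subtracting the two kinetic equations and using that $\bar{\bm{u}}$ is independent of $v$, so that $\nabla_v\cdot(\bar{\bm{u}}f_2) = \bar{\bm{u}}\cdot\nabla_v f_2$, gives
\begin{equation*}
	\partial_t\bar f + v\cdot\nabla_x\bar f + \nabla_v\cdot\big((\bm{u}_1 - v)\bar f\big) = -\,\bar{\bm{u}}\cdot\nabla_v f_2,
\end{equation*}
while subtracting the Stokes equations yields
\begin{equation*}
	\partial_t\bar{\bm{u}} - \Delta_x\bar{\bm{u}} + \nabla_x\bar p = \int_{\R^3}(v-\bm{u}_1)\bar f\,{\rm d}v - \rho_2\,\bar{\bm{u}}, \qquad \nabla_x\cdot\bar{\bm{u}} = 0,
\end{equation*}
with $\rho_2 = \int_{\R^3}f_2\,{\rm d}v \ge 0$ and homogeneous initial data $\bar f(0)=0$, $\bar{\bm{u}}(0)=0$. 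The crucial observation is that rewriting $\nabla_v\cdot(\bar{\bm{u}}f_2) = \bar{\bm{u}}\cdot\nabla_v f_2$ moves the velocity derivative onto the \emph{known} function $f_2$, avoiding any need to control $\nabla_v\bar f$ in $L^2$.

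Next I would run a velocity-weighted $L^2$ energy estimate for $\bar f$. Fix $\sigma > 1 + \tfrac{d}{2}$ (so that $\langle v\rangle^{1-\sigma} \in L^2(\R^3)$), multiply the $\bar f$-equation by $\langle v\rangle^{2\sigma}\bar f$ and integrate over $\Omega_x\times\R^3$. The transport term $v\cdot\nabla_x\bar f$ integrates to zero by periodicity, and the drift term $\nabla_v\cdot((\bm{u}_1 - v)\bar f)$ — together with the derivatives falling on the weight — contributes at most $C(1 + \|\bm{u}_1\|_{\bm{W}^{1,\infty}})\|\langle v\rangle^\sigma\bar f\|_{L^2_{x,v}}^2$, exactly as in the computation of Lemma \ref{mkmmm}. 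The coupling term is estimated, after Cauchy--Schwarz in $v$ and H\"older in $x$, by
\begin{equation*}
	\Big|\int_{\Omega_x}\!\int_{\R^3}\langle v\rangle^{2\sigma}\,\bar{\bm{u}}\cdot\nabla_v f_2\;\bar f\,{\rm d}v\,{\rm d}x\Big| \le \|\bar{\bm{u}}\|_{\bm{L}^6}\,\big\|\langle v\rangle^{\sigma}\nabla_v f_2\big\|_{L^3_x L^2_v}\,\big\|\langle v\rangle^{\sigma}\bar f\big\|_{L^2_{x,v}},
\end{equation*}
where the middle factor is finite and time-integrable by the weighted derivative bounds on $f_2$ furnished by Theorem \ref{ext:strong} (here $p > 3$ makes the exponent $q=3<p$ admissible).

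For the fluid part I would test the $\bar{\bm{u}}$-equation with $\bar{\bm{u}} \in \bm{J}_1$, so that the pressure drops out, giving $\tfrac12\frac{{\rm d}}{{\rm d}t}\|\bar{\bm{u}}\|_{\bm{L}^2}^2 + \|\nabla_x\bar{\bm{u}}\|_{\bm{L}^2}^2 = \int_{\Omega_x}(\int_{\R^3}(v-\bm{u}_1)\bar f\,{\rm d}v)\cdot\bar{\bm{u}}\,{\rm d}x - \int_{\Omega_x}\rho_2|\bar{\bm{u}}|^2\,{\rm d}x$. The last term is nonpositive since $\rho_2 \ge 0$ and is discarded; using the weight once more,
\begin{equation*}
	\Big|\int_{\Omega_x}\!\Big(\int_{\R^3}(v-\bm{u}_1)\bar f\,{\rm d}v\Big)\cdot\bar{\bm{u}}\,{\rm d}x\Big| \le C\big(1 + \|\bm{u}_1\|_{\bm{L}^\infty}\big)\,\big\|\langle v\rangle^{\sigma}\bar f\big\|_{L^2_{x,v}}\,\|\bar{\bm{u}}\|_{\bm{L}^2}.
\end{equation*}
Adding the two estimates, bounding $\|\bar{\bm{u}}\|_{\bm{L}^6} \le C\|\bar{\bm{u}}\|_{\bm{H}^1}$ and absorbing the resulting gradient term into the dissipation by Young's inequality, I arrive at
\begin{equation*}
	\frac{{\rm d}}{{\rm d}t}\Big(\big\|\langle v\rangle^{\sigma}\bar f\big\|_{L^2_{x,v}}^2 + \|\bar{\bm{u}}\|_{\bm{L}^2}^2\Big) \le C(t)\Big(\big\|\langle v\rangle^{\sigma}\bar f\big\|_{L^2_{x,v}}^2 + \|\bar{\bm{u}}\|_{\bm{L}^2}^2\Big),
\end{equation*}
with $C(t) \in L^1(0,T)$ thanks to $\bm{u}_1 \in L^2(0,T;\bm{W}^{1,\infty})$ and the time-integrability of $\|\langle v\rangle^\sigma\nabla_v f_2\|_{L^3_x L^2_v}^2$. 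Since the left side starts from zero, Gr\"onwall's inequality forces $\bar f \equiv 0$ and $\bar{\bm{u}} \equiv 0$; feeding this back into the momentum equation gives $\nabla_x\bar p = 0$, hence $\bar p = 0$ under the zero-mean normalization, which is the claim.

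The main difficulty lies entirely in the coupling terms: one must close the estimate without ever differentiating the unknown $\bar f$ in $v$, and without losing control of the velocity moments of $\bar f$ that enter the Stokes forcing. Both issues are resolved by the single device of propagating the weighted norm $\|\langle v\rangle^\sigma\bar f\|_{L^2_{x,v}}$ with $\sigma > 1 + \tfrac{d}{2}$, which converts the moment integrals into weighted $L^2$ pairings, combined with the strong regularity of the reference solution (high integrability of $\bm{u}_1$ and weighted bounds on $\nabla_v f_2$) guaranteed by Theorem \ref{ext:strong} and Lemma \ref{mkmmm}. I expect the technically most delicate point to be verifying the time-integrability of the constant $C(t)$, namely that $\bm{u}_1 \in L^2(0,T;\bm{W}^{1,\infty})$ and that $\langle v\rangle^\sigma\nabla_v f_2 \in L^2(0,T;L^3_x L^2_v)$, both of which rely on the regularity bounds assembled earlier in this section.
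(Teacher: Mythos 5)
Your proposal is correct and follows essentially the same route as the paper's proof: form the difference equations, run a velocity-weighted $L^2$ energy estimate for $\bar f$ (weight $\langle v\rangle^\sigma$ with $\sigma > 5/2$, matching the paper's requirement $k > \tfrac{5}{2}$) coupled with an $L^2$ energy estimate for $\bar{\bm{u}}$, absorb the coupling terms using the viscous dissipation together with the weighted regularity of the reference solution from Theorem \ref{ext:strong} and Lemma \ref{mkmmm}, and conclude by Gr\"onwall's inequality from zero initial data. The only differences are cosmetic: you place the $v$-derivative on $f_2$ rather than $f_1$, you use the Sobolev embedding $\bm{H}^1 \hookrightarrow \bm{L}^6$ with a mixed-norm bound on $\langle v\rangle^\sigma\nabla_v f_2$ where the paper uses an $L^5_x$ H\"older split with the Gagliardo--Nirenberg inequality and a weighted $L^{10/3}$ bound on $\nabla_v f_1$, and you additionally recover $\bar p = 0$, a step the paper leaves implicit.
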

\begin{proof}
	Suppose the solution is not unique, that is, $(f_1,\bm{u}_1,p_1)$ and $(f_2,\bm{u}_2,p_2)$ are two distinct strong solutions of \eqref{eq:continuous-model}-\eqref{contstokes} with say $\bm{u}_1\neq\bm{u}_2, p_1 \neq p_2$ and $f_1 \neq f_2$ . Let $\bar{\bm{u}} = \bm{u}_1 - \bm{u}_2, \bar{p} = p_1 - p_2$ and $\bar{f} = f_1 - f_2$, then $\bar{\bm{u}}, \bar{p}$ and $\bar{f}$ satisfies the following equations:
	\begin{equation}\label{131-1}
		\bar{f}_t + v\cdot\nabla_x\bar{f} + \nabla_v\cdot\left(\bar{\bm{u}}f_1 + \bm{u}_2\bar{f} - v\bar{f}\right) = 0,
	\end{equation}
	and
	\begin{equation}\label{141-1}
		\left\{
		\begin{aligned}
			\bar{\bm{u}}_t - \Delta_x\bar{\bm{u}} + \nabla_xp &= \int_{\R^3}\left(v\bar{f} - \bm{u}_2\bar{f} - \bar{\bm{u}}f_1\right)\,{\rm d}v,
			\\
			\nabla_x\cdot\bar{\bm{u}} &= 0
		\end{aligned}
		\right.
	\end{equation}
	with initial data
	\begin{equation*}
		\bar{f}(0,x,v) = 0, \qquad \bar{\bm{u}}(0,x) = 0.
	\end{equation*}
	By multiplying the equation \eqref{141-1} by $\bar{\bm{u}}$ and integrating in $x$, we obtain
	\begin{equation}\label{eq:a17}
		\begin{aligned}
			\frac{1}{2}\frac{{\rm d}}{{\rm d}t}\|\bar{\bm{u}}\|^2_{\bm{L}^2} + \|\nabla_x\bar{\bm{u}}\|_{\bm{L}^2} &= \int_{\Omega_x}\bar{\bm{u}}\left(\int_{\R^3}v\bar{f}\,{\rm d}v\right){\rm d}x - \int_{\Omega_x}\vert\bar{\bm{u}}\vert^2\left(\int_{\R^3}f_1\,{\rm d}v\right){\rm d}x 
			\\
			& \qquad - \int_{\Omega_x} \bm{u}_2\bar{\bm{u}}\left(\int_{\R^3}\bar{f}\,{\rm d}v\right){\rm d}x 
			\\
			&\leq \left(\int_{\Omega_x}\vert\bar{\bm{u}}\vert^2\left(\int_{\R^3}\frac{1}{\langle v\rangle^{2k-2}}\,{\rm d}v\right){\rm d}x\right)^\frac{1}{2}\left(\int_{\Omega_x}\int_{\R^3}\langle v\rangle^{2k}\vert\bar{f}\vert^2\,{\rm d}v\,{\rm d}x\right)^\frac{1}{2}
			\\
			& + \|\bm{u}_2\|_{\bm{L}^\infty}\left(\vert\bar{\bm{u}}\vert^2\left(\int_{\R^3}\frac{1}{\langle v\rangle^{2k}}\,{\rm d}v\right){\rm d}x\right)^\frac{1}{2}\left(\int_{\Omega_x}\int_{\R^3}\langle v\rangle^{2k}\vert \bar{f}\vert^2\,{\rm d}v\,{\rm d}x\right)^\frac{1}{2}
			\\
			& \leq C\|\bar{\bm{u}}\|^2_{\bm{L}^2} + \|\langle v\rangle^k\bar{f}\|^2_{L^2(\Omega_x\times\R^3)},
		\end{aligned}
	\end{equation}
	here, in the second step use the H\"older inequality and in the last step use the Young's inequality with assumption that $k > \frac{5}{2}$.
	
	Now, multiply equation \eqref{131-1} by $\langle v \rangle^{2k}\bar{f}$ and integrate in $x,v$-variables, to obtain
	\begin{equation}
		\begin{aligned}
			\frac{1}{2}\frac{{\rm d}}{{\rm d}t}&\|\langle v\rangle^k\bar{f}\|^2_{L^2(\Omega_x\times\R^2)} + \frac{1}{2}\int_{\Omega_x}\int_{\R^3}\langle v\rangle^{2k}v\cdot\nabla_x\bar{f}^2\,{\rm d}v\,{\rm d}x 
			\\
			&= -\int_{\Omega_x}\int_{\R^3}\langle v\rangle^{2k}\bar{f}\,\bar{\bm{u}}\cdot\nabla_vf_1\,{\rm d}v\,{\rm d}x 
			 - \frac{1}{2}\int_{\Omega_x}\int_{\R^3}\langle v\rangle^{2k}\bar{\bm{u}}_2\cdot\nabla_v\bar{f}^2\,{\rm d}v\,{\rm d}x 
			 \\
			 &\qquad + 3\int_{\Omega_x}\int_{\R^3}\langle v\rangle^{2k}\bar{f}^2\,{\rm d}v\,{\rm d}x + \frac{1}{2}\int_{\Omega_x}\int_{\R^3}\langle v\rangle^{2k}v\cdot\nabla_v\bar{f}^2\,{\rm d}v\,{\rm d}x.
		\end{aligned}
	\end{equation}
	A use of integration by parts yields
	\begin{equation}\label{eq:a19}
		\begin{aligned}
			\frac{1}{2}\frac{{\rm d}}{{\rm d}t}\|\langle v\rangle^k\bar{f}\|^2_{L^2(\Omega_x\times\R^3)} = \tilde{T}_1 + \tilde{T}_2 + \tilde{T}_3
		\end{aligned}
	\end{equation}
	where
	\begin{equation*}
		\tilde{T}_1 = -\int_{\Omega_x}\int_{\R^3}\langle v\rangle^{2k}\bar{f}\,\bar{\bm{u}}\cdot\nabla_vf_1\,{\rm d}v\,{\rm d}x 
	\end{equation*}
	\begin{equation*}
		\tilde{T}_2 = -\frac{1}{2}\int_{\Omega_x}\int_{\R^3}\langle v\rangle^{2k}\bar{\bm{u}}_2\cdot\nabla_v\bar{f}^2\,{\rm d}v\,{\rm d}x
	\end{equation*}
	\begin{equation*}
		\tilde{T}_3 =  3\int_{\Omega_x}\int_{\R^3}\langle v\rangle^{2k}\bar{f}^2\,{\rm d}v\,{\rm d}x + \frac{1}{2}\int_{\Omega_x}\int_{\R^3}\langle v\rangle^{2k}v\cdot\nabla_v\bar{f}^2\,{\rm d}v\,{\rm d}x.
	\end{equation*}
 	For $\tilde{T}_1$ term
	\begin{equation}\label{eq:a20}
		\begin{aligned}
			\tilde{T}_1 &\leq \left(\int_{\Omega_x}\int_{\R^3}\langle v\rangle^{2k}\bar{f}^2\,{\rm d}v\,{\rm d}x\right)^\frac{1}{2}\left(\int_{\Omega_x}\vert\bar{\bm{u}}\vert^5\left(\int_{\R^3}\frac{1}{\langle v\rangle^{5\alpha}}\,{\rm d}v\right)\,{\rm d}x\right)^\frac{1}{5}\cdot
			\\
			&\quad\left(\int_{\Omega_x}\int_{\R^3}\langle v\rangle^\frac{10(k+\alpha)}{3}(\nabla_vf_1)^\frac{10}{3}\,{\rm d}v\,{\rm d}x\right)^\frac{3}{10}
			\\
			&\leq \|\langle v\rangle^k\bar{f}\|_{L^2(\Omega_x\times\R^3)}\|\bar{\bm{u}}\|_{\bm{L}^5}\|\langle v\rangle^{k+\alpha}\nabla_vf_1\|_{L^\frac{10}{3}(\Omega_x\times\R^3)}
			\\
			&\leq \|\langle v\rangle^k\bar{f}\|_{L^2(\Omega_x\times\R^3)}\left(\|\bar{\bm{u}}\|_{\bm{L}^2}^\frac{1}{10}\|\nabla\bar{\bm{u}}\|_{\bm{L}^2}^\frac{9}{10} + \|\bar{\bm{u}}\|_{\bm{L}^2}\right)\|\langle v\rangle^{k+\alpha}\nabla_vf_1\|_{L^\frac{10}{3}(\Omega_x\times\R^3)}
			\\
			&\leq \frac{1}{2}\|\langle v\rangle^k\bar{f}\|^2_{L^2(\Omega_x\times\R^3)}\|\langle v\rangle^{k+\alpha}\nabla_vf_1\|^2_{L^\frac{10}{3}(\Omega_x\times\R^3)} + \frac{1}{20}\|\bar{\bm{u}}\|^2_{\bm{L}^2} + \frac{9}{20}\|\nabla\bar{\bm{u}}\|^2_{\bm{L}^2} 
			\\
			&\qquad + \frac{1}{2}\|\langle v\rangle^k\bar{f}\|^2_{L^2(\Omega_x\times\R^3)}\|\langle v\rangle^{k+\alpha}\nabla_vf_1\|^2_{L^\frac{10}{3}(\Omega_x\times\R^3)} + \frac{1}{2}\|\bar{\bm{u}}\|^2_{\bm{L}^2}
			\\
			&\leq \|\langle v\rangle^{k+\alpha}\nabla_vf_1\|^2_{L^\frac{10}{3}(\Omega_x\times\R^3)}\|\langle v\rangle^k\bar{f}\|^2_{L^2(\Omega_x\times\R^3)} + 2\|\bar{\bm{u}}\|^2_{\bm{L}^2} + \frac{9}{20}\|\nabla\bar{\bm{u}}\|^2_{\bm{L}^2}.
		\end{aligned}
	\end{equation}
	Here, in the second step we have used the H\"older inequality with the assumption that $\alpha > \frac{3}{5}$, in the fourth step the Gagliardo - Nirenberg inequality \cite[Theorem 9.9, p. 1317]{han2020large} and in the fifth step, we have applied the Young's inequality.\\
	A use of integration by parts with the H\"older inequality yields
	\begin{equation}\label{eq:a21}
		\begin{aligned}
			\tilde{T}_2 &= k\int_{\Omega_x}\int_{\R^3}\langle v\rangle^{2k-2}v\cdot\bm{u}_2\bar{f}^2\,{\rm d}v\,{\rm d}x
			\\
			&\leq k\|\bm{u}_2\|_{\bm{L}^\infty}\|\langle v\rangle^k\bar{f}\|^2_{L^2(\Omega_x\times\R^3)}.
		\end{aligned}
	\end{equation}
	Apply the integration by parts, then the $\tilde{T}_3$ term can be estimated as
	\begin{equation}\label{eq:a22}
		\begin{aligned}
			\tilde{T}_3 &= 3\|\langle v\rangle^k\bar{f}\|^2_{L^2(\Omega_x\times\R^3)} - k\int_{\Omega_x}\int_{\R^3}\langle v\rangle^{2k}\bar{f}^2\,{\rm d}v\,{\rm d}x - \frac{3}{2}\int_{\Omega_x}\int_{\R^3} \langle v\rangle^{2k}\bar{f}^2\,{\rm d}v\,{\rm d}x
			\\
			&= -\left(k - \frac{3}{2}\right)\|\langle v\rangle^k\bar{f}\|^2_{L^2(\Omega_x\times\R^3)}. 
		\end{aligned}
	\end{equation}
	Putting the estimates \eqref{eq:a20}-\eqref{eq:a22} into \eqref{eq:a19} shows
	\begin{equation}\label{eq:a23}
		\begin{aligned}
			\frac{1}{2}\frac{{\rm d}}{{\rm d}t}\|\langle v\rangle^k\bar{f}\|^2_{L^2(\Omega_x\times\R^3)} \leq C\|\langle v\rangle^k\bar{f}\|^2_{L^2(\Omega_x\times\R^3)} + 2\|\bar{\bm{u}}\|^2_{\bm{L}^2} + \frac{9}{20}\|\nabla\bar{\bm{u}}\|^2_{\bm{L}^2}. 
		\end{aligned}
	\end{equation}
	Adding equations \eqref{eq:a17} and \eqref{eq:a23}, we obtain
	\begin{equation}
		\begin{aligned}
			\frac{1}{2}\frac{{\rm d}}{{\rm d}t}\left( \|\bar{\bm{u}}\|^2_{\bm{L}^2} + \|\langle v\rangle^k\bar{f}\|^2_{L^2(\Omega_x\times\R^3)}\right) + \frac{11}{20}\|\nabla\bar{\bm{u}}\|^2_{\bm{L}^2} \leq C\left(\|\langle v\rangle^k\bar{f}\|^2_{L^2(\Omega_x\times\R^3)} + \|\bar{\bm{u}}\|^2_{\bm{L}^2}\right). 
		\end{aligned}
	\end{equation}
	A use of the Gr\"onwall's inequality yields
	\begin{equation}
		\|\bar{\bm{u}}\|^2_{\bm{L}^2} + \|\langle v\rangle^k\bar{f}\|^2_{L^2(\Omega_x\times\R^3)} \leq 0.
	\end{equation}
	This leads to a contradiction and $\bm{u}_1 = \bm{u}_2,  f_1 = f_2$. It completes the rest of the proof.
\end{proof}

\section{Discontinuous Galerkin approximations}\label{dg}
This section deals with the discontinuous Galerkin method and certain properties of the discrete system for two dimensional problem \eqref{eq:continuous-model}-\eqref{contstokes}. However, a remark on the three dimensional problem is given in subsection \ref{3Dsubsec}. It is clear that a compactly supported (in the velocity variable $v$) initial datum $f_0$ will yield a compactly supported solution $f(t,x,v)$ in the velocity variable $v$. Restricting ourselves to compactly supported initial data, we can assume without loss of generality that there is some $L>0$ such that for $v \in [-L,L]^2 = \Omega_v$ and $t \in (0,T]; \, \mbox{supp}f(t,x,v) \subset \Omega = \Omega_x \times \Omega_v$. 

Let $\mathcal{T}^x_{h}$ and $\mathcal{T}^v_{h}$ be the Cartesian partitions of $\Omega_x$ and $\Omega_v$, respectively which are shape regular and quasi-uniform. Let $\mathcal{T}_{h}$ be defined as cartesian product of these two partitions, i.e.,
\begin{align*}
	\mathcal{T}_{h} = \left\{ R = T^x \times T^v : T^x \in \mathcal{T}^x_{h}, \,  T^v \in \mathcal{T}^v_{h} \right\}.
\end{align*}
The mesh sizes $h$, $h_x$ and $h_v$ relative to these partitions are defined as follows:
\[
h_x := \max_{T^x \in \mathcal{T}^x_{h}} {\rm diam} (T^x);
\quad
h_v := \max_{T^v \in \mathcal{T}^v_{h}} {\rm diam} (T^v);
\quad
h := \max \left( h_x, h_v \right).
\]
We use $\Gamma_x$ and $\Gamma_v$ to denote the set of all edges of the partitions $\mathcal{T}^x_{h}$ and $\mathcal{T}^v_{h}$, respectively and  $\Gamma = \Gamma_x \times \Gamma_v.$
Further, let $\Gamma^0_{x}$ (respectively, $\Gamma^0_{v}$) and $\Gamma^\partial_{x}$ (respectively, $\Gamma^\partial _{v}$) denote the sets of interior and boundary edges of partition $\mathcal{T}^x_{h}$ (respectively, $\mathcal{T}^v_{h}$), so that $\Gamma_x = \Gamma^0_{x} \cup \Gamma^\partial _{x}$ (respectively, $\Gamma_v = \Gamma^0_{v} \cup \Gamma^\partial _{v}$).

We define the broken polynomial spaces as
\begin{equation*}
	\begin{aligned}
		\bm{H}_h &:= \left\{ \bm{\phi} \in L^2(\Omega_x) : \bm{\phi}\vert_{T^x} \in \left(\mathbb{P}^k(T^x)\right)^2,\, \,\forall\, T^x \in \mathcal{T}_{h}^x \right\},
		\\
		L_h&:= \left\{ \phi \in L^2_0(\Omega_x) : \phi\vert_{T^x} \in \mathbb{P}^k(T^x),\, \,\forall\, T^x \in \mathcal{T}_{h}^x \right\},
		\\
		X_h &:= \{\phi \in L^2(\Omega_x) : \phi\vert_{T^x} \in \mathbb{P}^k(T^x),\, \,\forall\, T^x \in \mathcal{T}_{h}^x\},
		\\
	V_h &:= \{\phi \in L^2(\Omega_v) : \phi\vert_{T^v} \in \mathbb{P}^k(T^v),\, \,\forall\, T^v \in \mathcal{T}_{h}^v\},
\\
	\mathcal{Z}_h &:= \left\{ \psi \in L^2(\Omega_x\times\Omega_v) : \psi\vert_{R} \in \mathbb{P}^k(T^x) \times \mathbb{P}^k(T^{v}),\,\,  \forall R = T^x \times T^v \in \mathcal{T}_{h} \right\},
\end{aligned}
\end{equation*}
where $\mathbb{P}^k(T)$ is the space of scalar polynomials of degree at most $k$ in each variable.

Below, we define the jump and average value on the mesh. Let the inward and outward unit normal vectors on the element $T^r, r = x \,\, \mbox{or}\,\,v$ denoted by $\bm{n}^-$ and $\bm{n}^+$, respectively. Following \cite{arnold2002unified, vemaganti2007discontinuous}, we define the average and jump of a scalar function $\phi$ and a vector-valued function $\bm{\phi}$ at the edges as follows:
\[ 
\{\phi \} = \frac{1}{2}(\phi^- + \phi^+), \hspace{5mm} [\![\phi]\!] = \phi^- \bm{n}^- + \phi^+ \bm{n}^+ \hspace{3mm} \text{on} \hspace{1mm}  \Gamma^0_r, \hspace{3mm} r = x \hspace{1mm}\text{or}\hspace{1mm} v
\]
\[ 
\{\bm{\phi} \} = \frac{1}{2}(\bm{\phi}^- + \bm{\phi}^+), \hspace{5mm} [\![\bm{\phi}]\!] = \bm{\phi}^-\cdot \bm{n}^- + \bm{\phi}^+\cdot \bm{n}^+ \hspace{3mm} \text{on} \hspace{1mm}  \Gamma^0_r, \hspace{3mm} r = x \hspace{1mm}\text{or}\hspace{1mm} v,
\]
where,
\[
\phi^{\pm}_{T^x}(x,.) = \lim_{\epsilon \to 0}\phi_{T^x}(x\pm \epsilon \bm{n}^-,.) \hspace{5mm}\forall \hspace{1mm} x \in \partial T^x.
\]
For a vector valued function $\bm{\phi}$ the weighted average is defined by
\[
\{\bm{\phi}\}_\delta := \delta\bm{\phi}^+ + \left(1 - \delta\right)\bm{\phi}^- \quad \mbox{for}\quad 0\leq \delta\leq 1.
\]
Note that for a fixed edge $\bm{n}^- = -\bm{n}^+$. For the boundary edges, the jump and the average are taken to be $[\![\phi]\!] = \phi\bm{n}$ and $\{\phi\} = \phi$.

The discrete spaces $W^{k,p}(\mathcal{T}_h)$ are defined as
\[
W^{k,p}(\mathcal{T}_h) = \{\phi \in L^p(\Omega): \phi\mid_R \in W^{k,p}(R), ~~ \forall~~ R \in \mathcal{T}_h\} \quad k \geq 0, ~1\leq p \leq \infty.
\]
We further use $H^k(\mathcal{T}_h)$ to denote $W^{k,2}(\mathcal{T}_h)$ for $k \geq 1$. The discrete norms are defined by 
\[
\|z\|_{m,\mathcal{T}_h} = \left(\sum_{R \in \mathcal{T}_h}\|z\|_{m,R}\right)^\frac{1}{2}, ~ \forall~ z \in H^m(\mathcal{T}_h),~ m \geq 0,
\]
\[
 \|z\|^p_{L^p(\mathcal{T}_h)} = \sum_{R \in \mathcal{T}_h}\|z\|_{L^p(R)},\,\,\forall\,z \in L^p(\mathcal{T}_h), 
\]
for all $1 \leq p < \infty$ and $\|z\|_{L^\infty(\mathcal{T}_h)} = \mbox{esssup}_{z\in\mathcal{T}_h}|z|$. For $F \in \Gamma_x, z_h \in X_h$
\[
\|z_h\|^2_{0,F} = \int_{F}[\![z_h]\!]_F\cdot[\![z_h]\!]_F\,{\rm d}s(x).
\]
For all $(\bm{w}_h,q_h) \in \bm{H}_h \times L_h$,
\[
\vertiii{\bm{w}_h}^2 = \|\nabla \bm{w}_h\|^2_{\bm{L}^2} + \sum_{F \in \Gamma_x}h^{-1}_x\|[\bm{w}_h]\|^2_{\bm{L}^2(F)},
\]
\[
\vertiii{\left(\bm{w}_h,q_h\right)}^2 = \vertiii{\bm{w}_h}^2 + \|q_h\|_{L^2(\mathcal{T}_h^x)}^2 + \sum_{F \in \Gamma_x}h_x\|[q_h]\|^2_{L^2(F)}.
\]
For our subsequent use, we recall some standard estimates. 
\begin{itemize}
	\item \textbf{Trace inequality:} (see \cite[Lemma 1.46, p. 27]{2}) Let $\phi_h \in X_h$, then for all $F \in \Gamma_x$ and $T^x \in \mathcal{T}_h^x$ we have
	\begin{equation}\label{traceeqn}
		\|\phi_h\|_{0,F } \leq Ch_x^{-\frac{1}{2}}\|\phi_h\|_{0,T^x}.
	\end{equation}
	\item \textbf{Norm comparison:} (see \cite[Lemma 1.50, p. 29]{2}) For $1\leq p,q \leq \infty$ and $\phi_h \in X_h$. There exists a constant $C > 0$ such that  
	\begin{equation}\label{normcompeqn}
		\|\phi_h\|_{L^p(T^x)} \leq Ch_x^{\frac{2}{p}-\frac{2}{q}}\|\phi_h\|_{L^q(T^x)} \quad \forall \quad T^x \in \mathcal{T}_h^x.
	\end{equation}
	\item \textbf{Inverse inequality:} (see \cite[Lemma 1.44, p. 26]{2}) If $\phi_h  \in X_h$. Then  
	\begin{equation}\label{eq:inverse}
		\|\nabla_x\phi_h\|_{0,T^x} \leq Ch_x^{-1}\|\phi_h\|_{0,T^x} \quad \forall \,\, T^x \in \mathcal{T}_h^x.
	\end{equation}
	\item A \textbf{Poincar\'e-Friedrichs} type inequality is proved in \cite[Lemma 2.1, p. 744]{arnold1982interior}  which says that
	\[
	\|w_h\|_{0,\Omega_x} \leq C\vertiii{w_h}, \quad \forall \quad w_h \in H^1(\Omega_x).
	\]
	\item \textbf{Projection operators :} Let $k \geq 0$. Let $\mathcal{P}_x : L^2(\Omega_x) \rightarrow X_h, \mathcal{P}_v : L^2(\Omega_v) \rightarrow V_h$, $\bm{\mathcal{P}_x} : \bm{L^2} \rightarrow \bm{H}_h, $ and $\mathcal{P}_h : L^2(\Omega) \rightarrow \mathcal{Z}_h$ be the standard $L^2$-projections onto the spaces $X_h, V_h, \bm{H}_h$ and $\mathcal{Z}_h$, respectively. Note that $\mathcal{P}_h = \mathcal{P}_x \otimes \mathcal{P}_v,$ (see \cite{ciarlet2002finite,agmon2010lectures}).\\
	By definition, $\mathcal{P}_h$ is stable in $L^2$-norm and it can be further shown to be stable in all $L^p$- norms (see \cite{crouzeix1987stability} for details). Let $1 \leq p \leq \infty$. Then for any $w \in L^p(\Omega)$
	\begin{equation}\label{L2stability}
		\|\mathcal{P}_h(w)\|_{L^p(\mathcal{T}_h)} \leq C\|w\|_{L^p(\Omega)}.
	\end{equation}
\end{itemize}

\subsection{Semi-discrete dG formulation}

Find $(f_h, \bm{u}_h,p_h)(t) \in \mathcal{Z}_h\times \bm{H}_h \times L_h $, for $t \in [0,T]$ such that\\
\begin{equation}{\label{bh}}
	\left(\frac{\partial f_h}{\partial t},\phi_h\right) + \mathcal{B}_{h}(\bm{u}_h ; f_{h},\phi_h) = 0  \hspace{2mm} \forall \,\,\phi_h \in \mathcal{Z}_h
\end{equation}
coupled with semi-discrete dG scheme for the Stokes system as follows:
\begin{equation}\label{ch1}
	\left(\frac{\partial \bm{u}_h}{\partial t}, \bm{\psi}_h\right) + a_h(\bm{u}_h,\bm{\psi}_h) + b_h(\bm{\psi}_h,p_h) + \left(\rho_h\bm{u}_h, \bm{\psi}_h\right) = \left(\rho_hV_h,\bm{\psi}_h\right) \quad \forall\,\, \bm{\psi}_h \in \bm{H}_h,
\end{equation}
\begin{equation}\label{ch2}
	-b_h(\bm{u}_h,w_h) + s_h(p_h,w_h) = 0 \quad \forall\,\,w_h \in L_h,
\end{equation}
with $f_h(0) = f_{0h} \in \mathcal{Z}_h$ and $\bm{u}_h(0) = \bm{u_{0h}} \in \bm{H}_h$  to be defined later,	
where
\begin{equation}
	\mathcal{B}_h(\bm{u}_h;f_h,\phi_h) := \sum_{R \in \mathcal{T}_h}\mathcal{B}_{h,R}(\bm{u}_h;f_h,\phi_h),
\end{equation}
with
\begin{equation}\label{bhdef}
	\begin{aligned}
		\mathcal{B}_{h,R}(\bm{u}_h;f_{h},\phi_h) & :=  - \int_{R}f_{h}v.\nabla_x\phi_h \, {\rm d}v\, {\rm d}x + \int_{T^v}\int_{\partial T^x}\widehat{v\cdot\bm{n}f_{h}}\phi_h\, {\rm d}s(x)\, {\rm d}v
		\\
		& \quad - \int_{R}f_h(\bm{u_{h}} - v).\nabla_v\phi_h \, {\rm d}v\, {\rm d}x +\int_{T^x}\int_{\partial T^v} \reallywidehat{(\bm{u}_h - v)\cdot\bm{n}f_h}\phi_h \, {\rm d}s(v)\, {\rm d}x,
	\end{aligned}
\end{equation}
wherein the numerical fluxes are taken to be
\begin{equation}\label{flux}
	\left\{
	\begin{aligned}
		\widehat{v\cdot \bm{n}f_h} &= \{vf_h\}_\beta\cdot\bm{n} := \left(\{vf_h\} + \frac{|v\cdot \bm{n}|}{2}[\![f_h]\!]\right)\cdot\bm{n}  \hspace{5mm} \text{on}\hspace{1mm} \Gamma^0_{x} \times T^v,
		\\
		\reallywidehat{(\bm{u}_h - v)\cdot \bm{n}f_h} &= \{(\bm{u}_h - v)f_h\}_\alpha\cdot\bm{n} := 
		\\
		&\qquad \left(\{(\bm{u}_h - v)f_h\} + \frac{|(\bm{u}_h - v)\cdot \bm{n}|}{2}[\![f_h]\!]\right)\cdot \bm{n}  \hspace{3mm} \text{on}\hspace{1mm} T^x \times \Gamma^0_{v},
	\end{aligned}
	\right.
\end{equation}
with $\bm{n} = \bm{n}^-, \beta = \frac{1}{2}(1\pm\mbox{sign}(v\cdot\bm{n}^\pm))$ and $\alpha = \frac{1}{2}(1\pm\mbox{sign}((\bm{u}_h-v)\cdot\bm{n}^\pm))$. For the more details about weighted average refer \cite{brezzi2004discontinuous}. On the boundary edges $e \in \Gamma_r^\partial, r = x,v$, we impose periodicity for $\reallywidehat{v\cdot\bm{n}f_h}$ and the compactness of support for $\reallywidehat{\left(\bm{u}_h - v\right)\cdot\bm{n}f_h}$. \\
In \eqref{ch1}, the bilinear form $a_h(\bm{u}_h,\bm{\psi}_h)$ stands for 
\begin{equation}{\label{aih}}
	\begin{aligned}
		a_h(\bm{u}_h,\bm{\psi}_h)  &= \sum_{i=1}^2a_{h,i}(u_{h,i},\psi_{h,i})
	\end{aligned}
\end{equation}
where, $u_{h,1}, u_{h,2}$ and $\psi_{h,1}, \psi_{h,2}$ denote the Cartesian components of $\bm{u}_h$ and $\bm{\psi}_h$, respectively with
\begin{equation*}
	\begin{aligned}
		a_{h,i}(u_{h,i},\psi_{h,i}) &= \sum_{T^x\in \mathcal{T}^x_{h}}\int_{T^x}\nabla{u_{h,i}}\cdot\nabla\psi_{h,i}\,{\rm d}x + \sum_{F\in \Gamma_x}\int_F\frac{\vartheta}{h_x}[\![u_{h,i}]\!]\cdot[\![\psi_{h,i}]\!]\,{\rm d}s(x)
		\\
		&\quad -\sum_{F\in \Gamma_x}\int_F\left(\{\nabla u_{h,i}\}\cdot [\![\psi_{h,i}]\!] + [\![u_{h,i}]\!]\cdot\{\nabla\psi_{h,i}\}\right)\,{\rm d}s(x).
	\end{aligned}
\end{equation*} 	
Here, $\vartheta > 0$ is a penalty parameter.

The $b_h(\cdot,\cdot)$ stands for 	
\begin{equation}{\label{bhh}}
	\begin{aligned}
		b_h(\bm{u}_h,w_h) = -\sum_{T^x\in \mathcal{T}^x_{h}}\int_{T^x} w_h\nabla\cdot  \bm{u}_h\,{\rm d}x + \sum_{F\in \Gamma_x}\int_F[\![\bm{u}_h]\!]\{w_h\}\,{\rm d}s(x)\, ,
	\end{aligned}
\end{equation}	
the stabilization term
\begin{equation}\label{sh}
	\begin{aligned}
		s_h(p_h,w_h) = \sum_{F\in\ \Gamma_x^0}h_x\int_F[\![p_h]\!]\cdot[\![w_h]\!]\,{\rm d}s(x).
	\end{aligned}
\end{equation}	
Like in continuous case, set discrete local density $\rho_h$ and discrete local macroscopic velocity $V_h$ as
\begin{equation}\label{rh}
	\rho_h = \sum_{T^v \in \mathcal{T}^v_{h}}\int_{T^v}f_h\,{\rm d}v \quad \mbox{and} \quad V_h = \frac{1}{\rho_h}\left(\sum_{T^v \in \mathcal{T}^v_{h}}\int_{T^v}vf_h\,{\rm d}v\right).
\end{equation}
Note that equations \eqref{ch1} and \eqref{ch2} are equivalent to 
\begin{equation}\label{ch}	
	\left(\frac{\partial \bm{u}_h}{\partial t}, \bm{\psi}_h\right) + \tilde{\mathcal{A}}((\bm{u}_h,p_h),(\bm{\psi}_h,w_h))\, + \,\left(\rho_h\bm{u}_h, \bm{\psi}_h\right) = \left(\rho_h V_h, \bm{\psi}_h\right) \hspace{2mm} \forall\hspace{1mm}(\bm{\psi}_h,w_h) \in \bm{H}_h\times L_h,
\end{equation}
where
\begin{equation}\label{A_h}
	\tilde{\mathcal{A}}((\bm{u}_h,p_h),(\bm{\psi}_h,w_h)) = a_h(\bm{u}_h,\bm{\psi}_h) + b_h(\bm{\psi}_h,p_h) - b_h(\bm{u}_h,w_h) + s_h(p_h,w_h).
\end{equation}
Note that equation \eqref{bhh} is equivalent to 
\begin{equation*}
	\begin{aligned}
		b_h(\bm{u}_h,w_h) = \sum_{T^x\in \mathcal{T}^x_{h}}\int_{T^x} \bm{u}_h\cdot\nabla  w_h\,{\rm d}x - \sum_{F\in \Gamma_x^0}\int_F[\![w_h]\!]\{\bm{u}_h\}\cdot \bm{n}_F\,{\rm d}s(x).
	\end{aligned}
\end{equation*}
The linear form $a_h(\bm{u}_h,\bm{\psi}_h)$ is coercive with respect to $\vertiii{\cdot}$-norm (for proof, see \cite[Lemma 4.12, p. 129]{2}), i.e. there exist $\beta > 0$ independent of $h$ such that
\begin{equation}\label{acoercive}
	\beta\vertiii{\bm{u}_h}^2 \leq a_h(\bm{u}_h,\bm{u}_h).
\end{equation}	
The bilinear form $\tilde{\mathcal{A}}((\bm{u}_h,p_h),(\bm{\psi}_h,w_h))$ satisfies the discrete inf-sup stability condition and is bounded in the $\vertiii{(\cdot,\cdot)}$-norm, i.e., there exist $\alpha > 0$ and $C > 0$ independent of $h$ such that
\begin{equation}\label{dic1}
	\alpha\vertiii{(\bm{u}_h,p_h)} \leq \sup_{(\bm{\psi}_h,w_h) \in \bm{H}_h\times L_h\setminus\{(0,0)\}}\frac{\tilde{\mathcal{A}}((\bm{u}_h,p_h),(\bm{\psi}_h,w_h))}{\vertiii{(\bm{\psi}_h,w_h)}}
\end{equation}
(for proof, refer \cite[ Lemma 6.13, p. 253]{2}) and 
\begin{equation}\label{ahbd}
	|\tilde{\mathcal{A}}((\bm{u}_h,p_h)(\bm{\psi}_h,w_h))| \leq C\vertiii{(\bm{u}_h,p_h)}\vertiii{(\bm{\psi}_h,w_h)}.
\end{equation}
Since $\bm{H}_h\times L_h \times \mathcal{Z}_h$ is finite dimensional, the discrete problem \eqref{bh} and \eqref{ch} leads to a system of non-linear ODEs. Then, an appeal to the Picard's theorem ensures an existence of a local-in-time unique solution $\left(\bm{u}_h,p_h,f_h\right)$. In order to continue the discrete solution for all $t \in [0,T]$, we need the boundedness of the discrete solution which we shall comment it as a Remark \ref{boundscase} later on in this paper. 

\subsection{Some properties of the discrete solution}\label{dgp}
Below we stated some properties satisfied by the discrete system. The proofs of which are similar to the proof of \cite[Lemma 3.3 - 3.6]{hutridurga2023discontinuous}. So we are skipping the details.
\begin{lem}
	Let $(f_h,\bm{u}_h,p_h) \in \mathcal{C}^1([0,T];\mathcal{Z}_h\times \bm{H}_h\times L_h)$ be the dG-dG approximation obtained as a solution to \eqref{bh} and \eqref{ch} with initial datum $f_h(0) = \mathcal{P}_h(0)$ and $\bm{u}_h(0) = \bm{\mathcal{P}_xu_0}$. Then, for all $t \geq 0$,
	\begin{itemize}
		\item Mass conservation
		\begin{equation*}
			\begin{aligned}
				\int_{\Omega}f_h(t,x,v)\,{\rm d}v\,{\rm d}x  = \int_{\Omega}f_0(x,v)\,{\rm d}v\,{\rm d}x \quad \forall\,\, k \geq 0.
			\end{aligned}
		\end{equation*}
		\item Momentum conservation
		\begin{equation*}
			\int_{\Omega}vf_h(t,x,v)\,{\rm d}v\,{\rm d}x + \int_{\Omega_x}\bm{u}_h\,{\rm d}x = \int_{\Omega}vf_0(x,v)\,{\rm d}v\,{\rm d}x + \int_{\Omega_x}\bm{u_0}\,{\rm d}x \quad \mbox{for}\quad k \geq 1.
		\end{equation*}
		\item For $k \geq 0$
		 \begin{equation}\label{eq:fhbd}
			\max_{t \in [0,T]}\|f_h\|_{0,\mathcal{T}_h} \leq e^{T}\|f_0\|_{0,\mathcal{T}_h}.
		\end{equation}
	\end{itemize}
\end{lem}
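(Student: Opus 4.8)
The plan is to prove all three identities by testing the semi-discrete kinetic equation \eqref{bh} against carefully chosen elements of $\mathcal{Z}_h$ and exploiting the conservative and coercive structure of the upwind numerical fluxes \eqref{flux}. Concretely, I would use $\phi_h \equiv 1$ for mass conservation, $\phi_h = v_j$ (the $j$-th velocity component, which lies in $\mathcal{Z}_h$ precisely when $k\ge 1$) combined with the Stokes equation \eqref{ch1} tested against the constant field $\bm{e}_j$ for momentum conservation, and $\phi_h = f_h$ for the $L^2$-bound \eqref{eq:fhbd}. The single structural fact behind the first two proofs is that the numerical flux is conservative: a direct check from \eqref{flux} shows that $\widehat{v\cdot\bm{n}f_h}$ reverses sign when the reference normal is flipped (using $\bm{n}^-=-\bm{n}^+$), so the values read from the two sides of an interior edge sum to zero, and likewise for $\reallywidehat{(\bm{u}_h - v)\cdot\bm{n}f_h}$.

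For mass conservation I take $\phi_h\equiv 1\in\mathcal{Z}_h$ (valid for $k\ge0$); then $\nabla_x\phi_h=\nabla_v\phi_h=0$ annihilates both volume integrals in \eqref{bhdef}, leaving only the edge fluxes. Summing $\mathcal{B}_{h,R}$ over $R$, conservativity makes every interior-edge contribution cancel pairwise (the test function is single-valued), the $x$-boundary terms vanish by periodicity, and the $v$-boundary terms vanish by the imposed compact support; hence $\mathcal{B}_h(\bm{u}_h;f_h,1)=0$ and $\frac{{\rm d}}{{\rm d}t}\int_\Omega f_h\,{\rm d}v\,{\rm d}x=0$. For momentum I test \eqref{bh} with $\phi_h=v_j$: the $x$-transport volume term vanishes since $\nabla_xv_j=0$, the $v$-transport volume term gives $-\int_\Omega(\bm{u}_h-v)_jf_h$, and, $v_j$ being globally continuous, all interior edge terms again cancel while boundary terms vanish as before, so that $\mathcal{B}_h(\bm{u}_h;f_h,v_j)=-\int_\Omega u_{h,j}f_h+\int_\Omega v_jf_h$. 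Testing \eqref{ch1} with the constant $\bm{e}_j$ annihilates $a_h$ and $b_h$ (their arguments have zero gradient and zero jump) and, recalling \eqref{rh}, converts the drag terms into $\int_{\Omega_x}\rho_h u_{h,j}=\int_\Omega u_{h,j}f_h$ and $\int_{\Omega_x}\rho_h V_{h,j}=\int_\Omega v_jf_h$; adding the two tested equations cancels these cross terms and yields the stated momentum identity, which explains the requirement $k\ge1$.

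For the $L^2$-bound I test \eqref{bh} with $\phi_h=f_h$, so the time term becomes $\tfrac12\frac{{\rm d}}{{\rm d}t}\|f_h\|_{0,\mathcal{T}_h}^2$. Writing $f_h\,v\cdot\nabla_xf_h=\tfrac12 v\cdot\nabla_xf_h^2$ and $f_h(\bm{u}_h-v)\cdot\nabla_vf_h=\tfrac12(\bm{u}_h-v)\cdot\nabla_vf_h^2$ and integrating by parts elementwise, the divergence identities $\nabla_x\cdot v=0$ and $\nabla_v\cdot(\bm{u}_h-v)=-2$ produce a single signed volume contribution $-\|f_h\|_{0,\mathcal{T}_h}^2$; this factor $-2$ is exactly where $d=2$ enters and ultimately gives the constant $e^{T}$. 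The element-boundary contributions, combined with the upwind fluxes, assemble over each interior edge into the non-negative penalties $\tfrac12\int_F|v\cdot\bm{n}|\,[\![f_h]\!]\cdot[\![f_h]\!]$ and $\tfrac12\int_F|(\bm{u}_h-v)\cdot\bm{n}|\,[\![f_h]\!]\cdot[\![f_h]\!]$, with boundary edges again controlled by periodicity and compact support. Dropping these non-negative terms gives $\mathcal{B}_h(\bm{u}_h;f_h,f_h)\ge -\|f_h\|_{0,\mathcal{T}_h}^2$, hence $\frac{{\rm d}}{{\rm d}t}\|f_h\|_{0,\mathcal{T}_h}^2\le 2\|f_h\|_{0,\mathcal{T}_h}^2$; Gr\"onwall's inequality together with the $L^2$-stability $\|\mathcal{P}_h f_0\|_{0,\mathcal{T}_h}\le\|f_0\|_{0,\mathcal{T}_h}$ then yields \eqref{eq:fhbd}.

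The routine bookkeeping is light; the one point demanding care is the edge algebra. The main obstacle is verifying cleanly that the fluxes of \eqref{flux} are simultaneously conservative, so that interior contributions cancel against a continuous test function (mass and momentum), and coercive, so that testing against $f_h$ leaves exactly the non-negative jump seminorms on interior edges. Both reduce to the elementary per-edge identity producing $\tfrac12|a\cdot\bm{n}|\big(f_h^--f_h^+\big)^2$ after inserting the definition of $\{\cdot\}_\beta$ and $\{\cdot\}_\alpha$, but one must track the conventions $\bm{n}^-=-\bm{n}^+$ and the distinct boundary treatments (periodicity in $x$, compact support in $v$) consistently; this is precisely the content borrowed from \cite[Lemmas 3.3--3.6]{hutridurga2023discontinuous}.
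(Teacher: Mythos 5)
Your proof is correct and is essentially the argument the paper has in mind: the paper itself omits the proof, deferring to Lemmas 3.3--3.6 of the cited reference \cite{hutridurga2023discontinuous}, and your choice of test functions ($\phi_h\equiv 1$ for mass, $\phi_h=v_j$ paired with $\bm{\psi}_h=\bm{e}_j$ for momentum, $\phi_h=f_h$ for the $L^2$ bound), together with the conservativity and coercivity of the upwind fluxes in \eqref{flux}, is exactly that standard argument, including the correct bookkeeping $\nabla_v\cdot(\bm{u}_h-v)=-2$ that produces the constant $e^T$ after Gr\"onwall and the $L^2$-stability \eqref{L2stability} of $\mathcal{P}_h$. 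The only step you leave implicit is that the conserved quantities at $t=0$ coincide with those of $(f_0,\bm{u_0})$ rather than of $(f_h(0),\bm{u}_h(0))$; this is immediate because $1\in\mathcal{Z}_h$ for $k\ge0$, $v_j\in\mathcal{Z}_h$ for $k\ge1$, and constants lie in $\bm{H}_h$, so the $L^2$-projections $\mathcal{P}_hf_0$ and $\bm{\mathcal{P}_xu_0}$ preserve the relevant moments.
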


As a consequence of the above lemma, for a given non-negative initial data, we have
\[
\int_\Omega f_h(t,x,v){\rm d}x\,{\rm d}v \geq 0 \quad\mbox{and}\quad \int_{\Omega_x}\rho_h(t,x){\rm d}x \geq 0.
\]


Note that we do not have the discrete version of the energy dissipation given in Lemma \ref{lem:energy-dissipate} as it is difficult to prove non-negative property of $f_h$.


For our subsequent use, we shall need the following lemma.
\begin{lem}\label{rhoh2}
	Let $\rho$ and $\rho_h$ be the continuum and the  discrete local density associated to the $f$ and i $f_h$, respectively. Then,
	\begin{equation*}
		\|\rho - \rho_h\|_{0,\mathcal{T}^x_{h}} \leq 2L\|f - f_h\|_{0,\mathcal{T}_h} \quad \mbox{and} \quad \|\rho - \rho_h\|_{\infty,\mathcal{T}^x_{h}} \leq 4L^2\|f - f_h\|_{\infty,\mathcal{T}_h}.
	\end{equation*}
	Moreover, 		
	\begin{equation*}
		\|\rho V - \rho_h V_h\|_{0,\mathcal{T}^x_{h}} \leq 4L^2\|f - f_h\|_{0,\mathcal{T}_h}.
	\end{equation*}		
\end{lem}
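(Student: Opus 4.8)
The plan is to exploit the compact-support hypothesis, which forces both densities to be velocity integrals over the same fixed box $\Omega_v = [-L,L]^2$. Since $\mathrm{supp}\,f,\ \mathrm{supp}\,f_h \subset \Omega_x\times\Omega_v$ and the velocity cells $\{T^v\}$ tile $\Omega_v$, the discrete density defined in \eqref{rh} reassembles into an honest integral, $\rho_h = \int_{\Omega_v} f_h\,{\rm d}v$, and likewise $\rho_h V_h = \sum_{T^v}\int_{T^v} v f_h\,{\rm d}v = \int_{\Omega_v} v f_h\,{\rm d}v$. I would therefore start by recording the two pointwise-in-$x$ identities
\[
\rho - \rho_h = \int_{\Omega_v}\big(f - f_h\big)\,{\rm d}v, \qquad \rho V - \rho_h V_h = \int_{\Omega_v} v\,\big(f - f_h\big)\,{\rm d}v,
\]
which reduce everything to estimating velocity averages of $f-f_h$.

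For the first $L^2$ bound I would fix $x$ and apply the Cauchy--Schwarz inequality in $v$, using $|\Omega_v|^{1/2} = 2L$, to get $|\rho-\rho_h| \le 2L\,\big(\int_{\Omega_v}|f-f_h|^2\,{\rm d}v\big)^{1/2}$; squaring, integrating over $\Omega_x$, and recalling that the broken norm $\|\cdot\|_{0,\mathcal{T}_h}$ coincides with the $L^2(\Omega)$ norm (the partition tiles $\Omega$) delivers the claimed constant $2L$. The $L^\infty$ bound is even more direct: $|\rho-\rho_h| \le \int_{\Omega_v}|f-f_h|\,{\rm d}v \le |\Omega_v|\,\|f-f_h\|_{\infty,\mathcal{T}_h} = 4L^2\|f-f_h\|_{\infty,\mathcal{T}_h}$, followed by taking the essential supremum in $x$.

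For the momentum estimate the only additional ingredient is the crude bound $|v|\le 2L$ on $\Omega_v$. Starting from the momentum identity and combining it with Cauchy--Schwarz in $v$ gives $|\rho V - \rho_h V_h| \le 2L\int_{\Omega_v}|f-f_h|\,{\rm d}v \le 2L\cdot|\Omega_v|^{1/2}\big(\int_{\Omega_v}|f-f_h|^2\,{\rm d}v\big)^{1/2}$, whence the factor $2L\cdot 2L = 4L^2$ emerges after squaring and integrating in $x$.

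I do not expect a genuine obstacle: the argument is an elementary combination of the compact-support structure with the Cauchy--Schwarz inequality. The only steps demanding care are bookkeeping ones, namely checking that the cell-by-cell definitions in \eqref{rh} genuinely reassemble into integrals over $\Omega_v$, and that the broken discrete norms agree with the corresponding Lebesgue norms because the mesh tiles the domain exactly. Once those identifications are made, the three inequalities follow immediately with the stated constants.
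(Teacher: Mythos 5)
Your proposal is correct, and it is essentially the argument the paper relies on: the paper defers the proof to \cite[Lemma 6]{hutridurga2024discontinuous}, where the same elementary combination of the compact-support identification $\rho-\rho_h=\int_{\Omega_v}(f-f_h)\,{\rm d}v$ (and its momentum analogue) with the Cauchy--Schwarz inequality in $v$ over $\Omega_v=[-L,L]^2$ yields the constants $2L$, $4L^2$, and $4L^2$. Your bookkeeping (broken norms coinciding with Lebesgue norms, $|\Omega_v|^{1/2}=2L$, $|v|\leq 2L$ on $\Omega_v$) is exactly right, so nothing is missing.
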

For details of the proof  of the above lemma, refer \cite[Lemma 6]{hutridurga2024discontinuous}.
From equation \eqref{eq:fhbd}, it follows 
\begin{equation}\label{rho1}
	\|\rho_h\|_{0,\mathcal{T}^x_{h} }\leq C(T)L\|f_0\|_{0,\mathcal{T}_h}.
\end{equation}

\section{A priori Error Estimates}\label{err}
This section discusses some a priori error estimates for the discrete solution.
\subsection{Error estimates for Stokes system}

This subsection deals with error estimates for the Stokes system.

\textbf{Stokes projection:} Define the Stokes projection $\left(\bm{\Pi_uu}(t),\Pi_pp(t)\right)$ of $\left(\bm{u}(t),p(t)\right)$ for all $t \in [0,T]$ satisfying 
\begin{equation}\label{4.2}
	a_h(\bm{u} - \bm{\Pi_uu}, \bm{\psi}_h) + b_h(\bm{\psi}_h,p - \Pi_pp) = 0 \quad\forall\,\,\bm{\psi}_h \in \bm{H}_h,
\end{equation}
\begin{equation}\label{4.3}
	-b_h(\bm{u}-\bm{\Pi_uu},w_h) + s_h(p - \Pi_pp,w_h) = 0 \quad \forall\,\, w_h \in L_h.
\end{equation}
Systems \eqref{4.2} and \eqref{4.3} can be written as
\begin{equation}\label{stokesproj}
	\tilde{\mathcal{A}}\left(\left(\bm{\Pi_uu-u},\Pi_pp-p\right),\left(\bm{\psi}_h,w_h\right)\right) = 0, \quad \forall\,\, (\bm{\psi}_h,w_h) \in \bm{H}_h\times L_h
\end{equation}
where, $\tilde{\mathcal{A}}\left(\left(\cdot,\cdot\right),(\cdot,\cdot)\right)$ is defined in \eqref{A_h}.

From equation \eqref{dic1}, $\tilde{\mathcal{A}}\left((\bm{\Pi_uu},\Pi_pp),(\bm{\psi}_h,w_h)\right)$ satisfies the discrete inf-sup condition in the $\vertiii{\left(\cdot,\cdot\right)}$-norm and by equation \eqref{ahbd}, it is bounded from above in the $\vertiii{\left(\cdot,\cdot\right)}$-norm. Therefore, an application of the Lax-Milgram lemma shows existence of a unique pair $(\bm{\Pi_uu},\Pi_pp) \in \bm{H}_h \times L_h$.

The following lemma gives the approximation properties of the Stokes projection, for proof refer \cite[Corollary 6.26, p. 260]{2}.
\begin{lem}
	Let $(\bm{\Pi_uu},\Pi_pp) \in \bm{H}_h\times L_h$ solve \eqref{stokesproj}. Assume that $(\bm{u},p) \in L^\infty(0,T;\bm{H}^{k+1}) \times L^\infty(0,T;H^k)$. Then,
	\begin{equation}\label{uestimat}
		\|\bm{u - \Pi_uu}\|_{\bm{L}^2} + h_x\vertiii{\bm{u - \Pi_uu}} + h_x\|p - \Pi_pp\|_{0,\mathcal{T}_h^x} \leq C h_x^{k+1},
	\end{equation}
	where, $C$ is a positive constant which is independent of $h_x$.
\end{lem}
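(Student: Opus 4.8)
The plan is to follow the classical two-step strategy for stabilized mixed DG approximations: first establish the energy-norm estimate using the discrete inf-sup stability \eqref{dic1} together with the boundedness \eqref{ahbd} and the Galerkin orthogonality \eqref{stokesproj}, and then upgrade the velocity error to the optimal $L^2$ rate by an Aubin--Nitsche duality argument.

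For the energy estimate, I would introduce an interpolant $(\bm{I_hu}, I_hp) \in \bm{H}_h \times L_h$ (for instance the $L^2$-projections) and split $(\bm{\Pi_uu - u}, \Pi_pp - p) = (\bm{\Pi_uu - I_hu}, \Pi_pp - I_hp) + (\bm{I_hu - u}, I_hp - p)$. The discrete component $(\bm{\eta}, \zeta) := (\bm{\Pi_uu - I_hu}, \Pi_pp - I_hp) \in \bm{H}_h \times L_h$ is controlled by the inf-sup condition: for every test pair $(\bm{\psi}_h, w_h)$ the orthogonality \eqref{stokesproj} lets me rewrite $\tilde{\mathcal{A}}((\bm{\eta}, \zeta), (\bm{\psi}_h, w_h)) = \tilde{\mathcal{A}}((\bm{I_hu - u}, I_hp - p), (\bm{\psi}_h, w_h))$, and then \eqref{ahbd} yields $\vertiii{(\bm{\eta}, \zeta)} \leq (C/\alpha)\, \vertiii{(\bm{I_hu - u}, I_hp - p)}$. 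A triangle inequality reduces the whole energy error to the interpolation error, and standard polynomial approximation on the Cartesian mesh (using the trace inequality \eqref{traceeqn} to handle the jump contributions inside $\vertiii{(\cdot,\cdot)}$) gives $\vertiii{(\bm{u - \Pi_uu}, p - \Pi_pp)} \leq C h_x^{k}\,(\|\bm{u}\|_{\bm{H}^{k+1}} + \|p\|_{H^k})$. Multiplying through by $h_x$ delivers the second and third terms in \eqref{uestimat}.

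For the $L^2$ bound on $\bm{e} := \bm{u - \Pi_uu}$, I would introduce the adjoint Stokes problem $-\Delta \bm{\phi} + \nabla \xi = \bm{e}$, $\nabla_x \cdot \bm{\phi} = 0$ on the torus, whose solution enjoys the elliptic-regularity bound $\|\bm{\phi}\|_{\bm{H}^2} + \|\xi\|_{H^1} \leq C \|\bm{e}\|_{\bm{L}^2}$. Testing against $\bm{e}$ and integrating by parts element-by-element recasts $\|\bm{e}\|_{\bm{L}^2}^2$ as $\tilde{\mathcal{A}}((\bm{e}, p - \Pi_pp), (\bm{\phi}, \xi))$ (here one must verify adjoint consistency, i.e. that the exact dual solution produces no spurious edge terms). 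Invoking \eqref{stokesproj} once more to subtract an interpolant $(\bm{I_h\phi}, I_h\xi)$ and then bounding by continuity \eqref{ahbd} gives $\|\bm{e}\|_{\bm{L}^2}^2 \leq C\, \vertiii{(\bm{e}, p - \Pi_pp)}\, \vertiii{(\bm{\phi - I_h\phi}, \xi - I_h\xi)}$. The first factor is $O(h_x^{k})$ by the energy estimate, while the interpolation error of the $\bm{H}^2 \times H^1$-regular dual pair is $O(h_x)\,\|\bm{e}\|_{\bm{L}^2}$; cancelling one factor of $\|\bm{e}\|_{\bm{L}^2}$ yields $\|\bm{e}\|_{\bm{L}^2} \leq C h_x^{k+1}$, the first term in \eqref{uestimat}.

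The main obstacle is the duality step rather than the energy estimate: one has to confirm adjoint consistency of the stabilized form $\tilde{\mathcal{A}}$ so that the smooth dual solution contributes no residual across interior edges, and one needs full $\bm{H}^2$ elliptic regularity for the Stokes operator with periodic boundary conditions to legitimize the $O(h_x)$ gain. Both ingredients are standard and are precisely what is packaged in \cite[Corollary 6.26, p. 260]{2}; accordingly I would either invoke that result directly or reproduce its duality computation, taking care only with the edge and penalty contributions specific to $s_h(\cdot,\cdot)$ and $b_h(\cdot,\cdot)$.
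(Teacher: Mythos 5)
The paper offers no proof of this lemma at all---it simply cites \cite[Corollary 6.26, p. 260]{2}---and your argument (Galerkin orthogonality \eqref{stokesproj} plus inf-sup stability \eqref{dic1} for the energy-norm bound, then an adjoint-consistent Aubin--Nitsche duality with $\bm{H}^2\times H^1$ Stokes regularity on the torus for the $L^2$ upgrade) is exactly the proof packaged in that cited corollary, as you yourself acknowledge. The one technicality you gloss over is that the boundedness \eqref{ahbd} is stated only for discrete pairs, so when you apply continuity to $(\bm{I_hu}-\bm{u}, I_hp-p)$ you must work in the extended norm (including weighted normal-derivative traces on faces) on $\bm{H}^{k+1}+\bm{H}_h$; this is standard, is handled in the cited reference, and does not alter your route.
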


Next, lemma shows a  relation between the $L^\infty$ and $L^2$ bounds while approximating function in the broken polynomial space. For the proof refer \cite[Lemma 3, p. 5]{hutridurga2024discontinuous}.
\begin{lem}\label{L:uinf}
	Let $\bm{u}_h \in \bm{H}_h$, an approximation of $\bm{u}$ be defined by \eqref{ch1}-\eqref{ch2}. Assume that $\bm{u} \in \bm{W}^{1,\infty} \cap \bm{H}^{k+1}$. Then, 
	\begin{equation*}\label{inftybound}
		\|\bm{u} - \bm{u}_h\|_{\bm{L}^\infty} \lesssim h_x\|\bm{u}\|_{\bm{W}^{1,\infty}} + h^k_x\|\bm{u}\|_{\bm{k+1,2}} + h^{-1}_x\|\bm{u} - \bm{u}_h\|_{\bm{L}^2}.
	\end{equation*}
\end{lem}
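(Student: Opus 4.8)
The plan is to insert the broken $L^2$-projection $\bm{\mathcal{P}_x u} \in \bm{H}_h$ as an intermediate quantity and split the error through the triangle inequality as
\[
\|\bm{u} - \bm{u}_h\|_{\bm{L}^\infty} \leq \|\bm{u} - \bm{\mathcal{P}_x u}\|_{\bm{L}^\infty} + \|\bm{\mathcal{P}_x u} - \bm{u}_h\|_{\bm{L}^\infty}.
\]
The first summand is a pure approximation term, handled by polynomial approximation theory, while the second is a fully discrete quantity in $\bm{H}_h$, to which I would apply the inverse-type norm comparison \eqref{normcompeqn}. Throughout, the scalar estimates \eqref{normcompeqn} and \eqref{L2stability} are invoked componentwise on the Cartesian components of $\bm{u}$ and $\bm{u}_h$, which is legitimate since $\bm{H}_h$ is built componentwise from $X_h$.

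For the projection term, since $\bm{H}_h$ is a broken (discontinuous) space, $\bm{\mathcal{P}_x}$ acts elementwise, so working on each $T^x \in \mathcal{T}_h^x$ I would use that $\bm{\mathcal{P}_x}$ reproduces constants together with its $L^\infty$-stability \eqref{L2stability}. Writing $\bm{u} - \bm{\mathcal{P}_x u} = (\bm{u} - \bar{\bm{u}}) - \bm{\mathcal{P}_x}(\bm{u} - \bar{\bm{u}})$ with $\bar{\bm{u}}$ the elementwise mean of $\bm{u}$ on $T^x$, the $L^\infty$-stability and the first-order estimate $\|\bm{u} - \bar{\bm{u}}\|_{\bm{L}^\infty(T^x)} \lesssim h_x\|\nabla_x\bm{u}\|_{\bm{L}^\infty(T^x)}$ give, after taking the maximum over elements,
\[
\|\bm{u} - \bm{\mathcal{P}_x u}\|_{\bm{L}^\infty} \lesssim h_x\|\bm{u}\|_{\bm{W}^{1,\infty}}.
\]

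For the discrete term, applying the norm comparison \eqref{normcompeqn} componentwise with $(p,q) = (\infty,2)$ on each element yields the factor $h_x^{-1}$, so that
\[
\|\bm{\mathcal{P}_x u} - \bm{u}_h\|_{\bm{L}^\infty} \lesssim h_x^{-1}\|\bm{\mathcal{P}_x u} - \bm{u}_h\|_{\bm{L}^2}.
\]
A further triangle inequality $\|\bm{\mathcal{P}_x u} - \bm{u}_h\|_{\bm{L}^2} \leq \|\bm{\mathcal{P}_x u} - \bm{u}\|_{\bm{L}^2} + \|\bm{u} - \bm{u}_h\|_{\bm{L}^2}$ together with the standard $L^2$-projection estimate $\|\bm{u} - \bm{\mathcal{P}_x u}\|_{\bm{L}^2} \lesssim h_x^{k+1}\|\bm{u}\|_{\bm{H}^{k+1}}$ then produces the two remaining terms $h_x^k\|\bm{u}\|_{\bm{k+1,2}}$ and $h_x^{-1}\|\bm{u} - \bm{u}_h\|_{\bm{L}^2}$. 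Collecting the three contributions gives the claimed bound.

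The computations here are routine; the only point requiring care is the bookkeeping of the powers of $h_x$, in particular that the $h_x^{-1}$ coming from \eqref{normcompeqn} exactly downgrades the optimal $L^2$-projection order $k+1$ to the stated order $k$ in the second term, and that the low-order term $h_x\|\bm{u}\|_{\bm{W}^{1,\infty}}$ is the best one can extract in $L^\infty$ from $\bm{W}^{1,\infty}$ regularity alone without assuming higher $L^\infty$-Sobolev smoothness of $\bm{u}$.
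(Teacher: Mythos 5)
Your proof is correct and is essentially the same argument as the paper's: the paper defers this lemma to \cite[Lemma 3, p. 5]{hutridurga2024discontinuous}, whose proof is precisely this splitting through the broken $L^2$-projection, with the projection error bounded in $\bm{L}^\infty$ by $h_x\|\bm{u}\|_{\bm{W}^{1,\infty}}$ and the discrete remainder handled by the elementwise norm comparison \eqref{normcompeqn} together with the optimal $L^2$-projection estimate and a triangle inequality. Your power-of-$h_x$ bookkeeping (in particular $h_x^{-1}\cdot h_x^{k+1} = h_x^k$) is exactly right, so nothing further is needed.
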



\textbf{Error equation for Stokes part:}
Since the scheme \eqref{ch1}-\eqref{ch2} is consistent, \eqref{ch1}-\eqref{ch2} also hold for the solution $(\bm{u},p,f)$ to the continuum model. Hence, by taking the difference with $\bm{e_u} := \bm{u} - \bm{u}_h,~ e_p := p - p_h$, we obtain the following equations:
\begin{equation}\label{errstokes11}
	\begin{aligned}
		\left(\frac{\partial \bm{e_u}}{\partial t},\bm{\psi}_h\right) + a_h\left(\bm{e_u},\bm{\psi}_h\right) + b_h\left(\bm{\psi}_h,e_p\right) &+ \left(\rho\bm{u} - \rho_h\bm{u}_h , \bm{\psi}_h\right) 
		\\
		&= \left(\rho V - \rho_hV_h,\bm{\psi}_h\right), \,\, \forall\,\,\bm{\psi}_h \in \bm{H}_h,
	\end{aligned}
\end{equation}
\begin{equation}\label{errstokes21}
	-b_h(\bm{e_u},w_h) + s_h(e_p,w_h) = 0 \quad \forall\,\, w_h \in L_h.
\end{equation}
Using the Stokes projection operator, we rewrite
\begin{equation}\label{ues}
	\begin{aligned}
		\bm{e_u} &:= \bm{u} - \bm{u}_h := \left(\bm{\Pi_uu} - \bm{u}_h\right) -  \left(\bm{\Pi_uu} - \bm{u}\right) =: \bm{\theta_u - \eta_u},
		\\
		e_p &:= p - p_h := (\Pi_pp - p_h) - (\Pi_pp - p) =: \theta_p - \eta_p.
	\end{aligned}
\end{equation}
Using \eqref{ues} and \eqref{4.2}-\eqref{4.3}, the error equation \eqref{errstokes11}-\eqref{errstokes21} becomes
\begin{equation}\label{errstokes1}
	\begin{aligned}
		\left(\frac{\partial \bm{\theta_u}}{\partial t},\bm{\psi}_h\right) & + a_h\left(\bm{\theta_u},\bm{\psi}_h\right) + b_h\left(\bm{\psi}_h,\theta_p\right) + \left(\rho\bm{\theta_u}, \bm{\psi}_h\right) = \left(\rho V - \rho_hV_h,\bm{\psi}_h\right) 
		\\
		&+ \left(\frac{\partial\bm{\eta_u}}{\partial t}, \bm{\psi}_h\right) + \left(\rho\bm{\eta_u},\bm{\psi}_h\right) + \left(\left(\rho - \rho_h\right)\bm{\theta_u},\bm{\psi}_h\right) 
		\\
		&- \left(\left(\rho - \rho_h\right)\bm{\eta_u},\bm{\psi}_h\right) - \left(\left(\rho - \rho_h\right)\bm{u},\bm{\psi}_h\right), \quad \forall\,\,\bm{\psi}_h \in \bm{H}_h
	\end{aligned}
\end{equation}
\begin{equation}\label{errstokes2}
	-b_h(\bm{\theta_u},w_h) + s_h(\theta_p,w_h) = 0 \quad \forall\,\, w_h \in L_h.
\end{equation}


\begin{lem}\label{ubound}
	Let $(\bm{u},p)$ be the unique solution of the Stokes equation \eqref{contstokes}. Let $(\bm{u}_h,p_h) \in \bm{H}_h\times L_h$ solve \eqref{ch}. Assume $(\bm{u},p) \in L^\infty(0,T;\bm{H}^{k+1}) \times L^\infty(0,T;H^k)$. Then, there exists a positive constant $C$ independent of $h$, such that for all $t \in (0,T]$,
	\begin{equation}\label{thetauest}
		\begin{aligned}
			\frac12\frac{{\rm d}}{{\rm d}t}\|\bm{\theta_u}\|^2_{\bm{L}^2} &+ \beta\vertiii{\bm{\theta_u}(t)}^2 + s_h(\theta_p,\theta_p) 
			\\
			&\leq C\left(h^{k+1} + \|f - f_h\|_{0,\mathcal{T}_h} + h^{-1}\|f - f_h\|_{0,\mathcal{T}_h}\|\bm{\theta_u}\|_{\bm{L}^2}\right)\|\bm{\theta_u}\|_{\bm{L}^2}.
		\end{aligned}
	\end{equation}
\end{lem}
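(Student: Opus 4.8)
The plan is to test the error equations \eqref{errstokes1}--\eqref{errstokes2} against the discrete error pair $(\bm{\theta_u},\theta_p)$ and then estimate the six source terms on the right-hand side of \eqref{errstokes1}. First I would set $\bm{\psi}_h=\bm{\theta_u}$ in \eqref{errstokes1} and $w_h=\theta_p$ in \eqref{errstokes2}. The second choice yields $b_h(\bm{\theta_u},\theta_p)=s_h(\theta_p,\theta_p)$, which I substitute into the first equation so that the pressure coupling $b_h(\bm{\theta_u},\theta_p)$ is replaced by $s_h(\theta_p,\theta_p)$. Since $\rho=\int_{\R^d}f\,{\rm d}v\ge0$ for non-negative $f$, the term $(\rho\bm{\theta_u},\bm{\theta_u})=\int_{\Omega_x}\rho|\bm{\theta_u}|^2\,{\rm d}x$ is non-negative and may be discarded from the left-hand side, while the coercivity \eqref{acoercive} gives $a_h(\bm{\theta_u},\bm{\theta_u})\ge\beta\vertiii{\bm{\theta_u}}^2$. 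Together these reproduce exactly the left-hand side $\tfrac12\tfrac{{\rm d}}{{\rm d}t}\|\bm{\theta_u}\|^2_{\bm{L}^2}+\beta\vertiii{\bm{\theta_u}(t)}^2+s_h(\theta_p,\theta_p)$ of \eqref{thetauest}, so it remains only to bound the right-hand side of \eqref{errstokes1} by the claimed quantity.

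Next I would estimate the six source terms one at a time, each via the Cauchy--Schwarz inequality followed by the appropriate stability or approximation result. The first term $(\rho V-\rho_hV_h,\bm{\theta_u})$ is controlled by $\|\rho V-\rho_hV_h\|_{0,\mathcal{T}^x_h}\|\bm{\theta_u}\|_{\bm{L}^2}\le 4L^2\|f-f_h\|_{0,\mathcal{T}_h}\|\bm{\theta_u}\|_{\bm{L}^2}$ through Lemma \ref{rhoh2}, contributing to the $\|f-f_h\|_{0,\mathcal{T}_h}\|\bm{\theta_u}\|_{\bm{L}^2}$ term. The projection terms $(\partial_t\bm{\eta_u},\bm{\theta_u})$ and $(\rho\bm{\eta_u},\bm{\theta_u})$ are each bounded by $Ch^{k+1}\|\bm{\theta_u}\|_{\bm{L}^2}$: the former because differentiating the defining relations \eqref{4.2}--\eqref{4.3} in time (the forms being time-independent) shows $\partial_t\bm{\eta_u}$ is the Stokes-projection error of $\partial_t\bm{u}$, so \eqref{uestimat} applies to $\partial_t\bm{u}$; the latter because $\|\rho\|_{L^\infty}$ is bounded by Lemma \ref{lem:rho} and $\|\bm{\eta_u}\|_{\bm{L}^2}\le Ch^{k+1}$ by \eqref{uestimat}. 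The two terms $((\rho-\rho_h)\bm{\eta_u},\bm{\theta_u})$ and $((\rho-\rho_h)\bm{u},\bm{\theta_u})$ are handled by placing $\rho-\rho_h$ in $L^2$ (Lemma \ref{rhoh2} gives $\|\rho-\rho_h\|_{0,\mathcal{T}^x_h}\le 2L\|f-f_h\|_{0,\mathcal{T}_h}$) and $\bm{\eta_u}$, $\bm{u}$ in $L^\infty$, yielding contributions of order $\|f-f_h\|_{0,\mathcal{T}_h}\|\bm{\theta_u}\|_{\bm{L}^2}$.

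The delicate term, and the main obstacle, is the cubic one $((\rho-\rho_h)\bm{\theta_u},\bm{\theta_u})$: it is quadratic in the unknown error $\bm{\theta_u}$ and cannot be absorbed into the diffusion, so it must instead generate the singular factor $h^{-1}$. Here I would write $((\rho-\rho_h)\bm{\theta_u},\bm{\theta_u})\le\|\rho-\rho_h\|_{0,\mathcal{T}^x_h}\|\bm{\theta_u}\|_{\bm{L}^4}^2$, bound the first factor by $2L\|f-f_h\|_{0,\mathcal{T}_h}$ via Lemma \ref{rhoh2}, and then invoke the norm-comparison/inverse estimate \eqref{normcompeqn} with $p=4$, $q=2$ (whose global form is valid on the quasi-uniform mesh in the $d=2$ setting) to get $\|\bm{\theta_u}\|_{\bm{L}^4}^2\le Ch^{-1}\|\bm{\theta_u}\|_{\bm{L}^2}^2$. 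This produces precisely the term $h^{-1}\|f-f_h\|_{0,\mathcal{T}_h}\|\bm{\theta_u}\|_{\bm{L}^2}\cdot\|\bm{\theta_u}\|_{\bm{L}^2}$. Collecting all six bounds and factoring out a common $\|\bm{\theta_u}\|_{\bm{L}^2}$ then gives \eqref{thetauest}. The only point requiring care beyond the cubic term is ensuring enough time regularity of $(\bm{u},p)$ for \eqref{uestimat} to apply to $\partial_t\bm{u}$, which is guaranteed by the regularity established in Theorem \ref{ext:strong}.
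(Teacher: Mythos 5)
Your proposal is correct and follows essentially the same route as the paper: test \eqref{errstokes1}--\eqref{errstokes2} with $(\bm{\theta_u},\theta_p)$, use the coercivity \eqref{acoercive}, drop the non-negative $\rho$-weighted term, and bound the six sources via H\"older, Lemma \ref{rhoh2} and the Stokes projection estimate \eqref{uestimat}. The only cosmetic difference is in the cubic term, where you invoke the norm comparison $\|\bm{\theta_u}\|_{\bm{L}^4}^2\le Ch^{-1}\|\bm{\theta_u}\|_{\bm{L}^2}^2$ while the paper uses $\|\bm{\theta_u}\|_{\bm{L}^\infty}\le Ch^{-1}\|\bm{\theta_u}\|_{\bm{L}^2}$; both yield the same $h^{-1}$ factor in \eqref{thetauest}.
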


\begin{proof}
	Choose $\bm{\psi}_h = \bm{\theta_u}$ and $w_h = \theta_p$ in \eqref{errstokes1} and \eqref{errstokes2}, respectively. Then add the resulting expressions. Using the H\"older inequality and the coercivity \eqref{acoercive} of $a_h(\bm{\theta_u,\theta_u})$, we obtain
	\begin{equation*}
		\begin{aligned}
			\frac{1}{2}\frac{{\rm d}}{{\rm d}t}\|\bm{\theta_u}\|^2_{\bm{L}^2} + \beta\vertiii{\bm{\theta_u}(t)}^2 &+ s_h(\theta_p,\theta_p)  + \|\rho^{\frac{1}{2}}\bm{\theta_u}\|^2_{\bm{L}^2} \leq \left( \|\rho V - \rho_hV_h\|_{0,\mathcal{T}_h^x} + \|\partial_t\bm{\eta_u}\|_{\bm{L}^2} \right.
			\\
			&\left.+ \|\rho\|_{L^\infty(\Omega_x)}\|\bm{\eta_u}\|_{\bm{L}^2} + \|\rho - \rho_h\|_{0,\mathcal{T}^x_h}\|\bm{u}\|_{\bm{L}^\infty}\right)\|\bm{\theta_u}\|_{\bm{L}^2}
			\\
			&+ \left(\|\rho - \rho_h\|_{0,\mathcal{T}_h^x}\left(\|\bm{\theta_u}\|_{\bm{L}^2} + \|\bm{\eta_u}\|_{\bm{L}^2}\right)\right)\|\bm{\theta_u}\|_{\bm{L}^\infty}.
		\end{aligned}
	\end{equation*}
	Since the last term on the left hand side is non-negative, it can be dropped. A use of the projection estimate along with the Lemma \ref{rhoh2} and the estimate $\|\cdot\|_{L^\infty} \leq h^{-1}\|\cdot\|_{L^2}$ yields \eqref{thetauest}.
\end{proof}

\subsection{Error estimates for the kinetic equation }

Since the scheme \eqref{bh} is consistent, the solution $(\bm{u},f)$ to the continuum problem should satisfy 
\begin{equation}\label{weakvlasov}
	\left(\frac{\partial f}{\partial t},\phi_h\right) + \mathcal{B}(\bm{u};f,\phi_h) = 0 \quad\forall\,\phi_h \in \mathcal{Z}_h,
\end{equation}
where 
\[
\mathcal{B}(\bm{u};f,\phi_h) := \sum_{R \in \mathcal{T}_{h}}\mathcal{B}_{R}(\bm{u};f,\phi_h),
\]
with
\begin{equation*}
	\begin{aligned}
		\mathcal{B}_{R}(\bm{u};f,\phi_h) &=  - \int_R f v\cdot \nabla_x\phi_h\,{\rm d}x\,{\rm d}v - \int_R f(\bm{u}-v)\cdot\nabla_v\phi_h\,{\rm d}x\,{\rm d}v 
		\\
		&+ \int_{T^v}\int_{\partial T^x}v\cdot \bm{n} f\phi_h\,{\rm d}s(x)\,{\rm d}v + \int_{\partial T^v}\int_{T^x}(\bm{u}-v)\cdot \bm{n} f\phi_h\,{\rm d}x\,{\rm d}s(v).  
	\end{aligned}
\end{equation*}	
Subtracting equation \eqref{bh} from \eqref{weakvlasov}, we obtain the error equation:
\begin{equation*}\label{erro}
	\begin{aligned}
		\left(\frac{\partial }{\partial t}\left(f - f_h\right),\phi_h\right) + \mathcal{B}(\bm{u};f,\phi_h) - \mathcal{B}_{h}(\bm{u}_h;f_h,\phi_h) = 0
		\quad\forall\, \phi_h \in \mathcal{Z}_h.  
	\end{aligned}
\end{equation*}
Setting $e_f = f - f_h$, we rewrite the error equation as  
\begin{equation}\label{error}
	\begin{aligned}
		\left(\frac{\partial e_f}{\partial t},\phi_h\right) + a_h^0(e_f,\phi_h) + \mathcal{N}(\bm{u};f,\phi_h) - \mathcal{N}^h(\bm{u}_h;f_h,\phi_h) = 0
		\quad\forall\, \phi_h \in \mathcal{Z}_h , 
	\end{aligned}
\end{equation}
where
\begin{equation*}\label{ae}
	a_h^0(e_f,\phi_h) = -\sum_{R \in \mathcal{T}_{h}}\int_{R}e_f v\cdot \nabla_x\phi_h\,{\rm d}x\,{\rm d}v
	-\sum_{T^v \in \mathcal{T}_{h}^v}\int_{T^v}\int_{\Gamma_x}\{v e_f\}_\beta\cdot[\![\phi_h]\!]\,{\rm d}s(x)\,{\rm d}v,
\end{equation*}	
\begin{equation}\label{N}
	\mathcal{N}(\bm{u};f,\phi_h) = -\sum_{R \in \mathcal{T}_{h}}\int_{R} f (\bm{u} - v)\cdot\nabla_v\phi_h\,{\rm d}x\,{\rm d}v \,\, - \sum_{T^x \in \mathcal{T}_{h}^x}\int_{\Gamma_v}\int_{T^x}(\bm{u} - v)f\cdot[\![\phi_h]\!]\,{\rm d}x\,{\rm d}s(v) \, ,
\end{equation}
and
\begin{equation}\label{Nh}
	\begin{aligned}
		\mathcal{N}^h(\bm{u}_h;f_h,\phi_h) = &-\sum_{R \in \mathcal{T}_{h}}\int_{R} f_h (\bm{u}_h - v)\cdot\nabla_v\phi_h\,{\rm d}x\,{\rm d}v 
		\\
		&- \sum_{T^x \in \mathcal{T}_{h}^x}\int_{\Gamma_v}\int_{T^x}\{(\bm{u}_h - v) f_h\}_\alpha\cdot[\![\phi_h]\!]\,{\rm d}x\,{\rm d}s(v) \, .
	\end{aligned}
\end{equation}

\subsection{Special Projection:}
Recall now the projection operator defined in \cite{hutridurga2024discontinuous}. We set projection operator $\Pi_h : \mathcal{C}^0(\Omega) \rightarrow \mathcal{Z}_h$ as follows: For an arbitrary element $R = T^x \times T^v$  of $\mathcal{T}_h$ and for any $w \in \mathcal{C}^0({R})$. The restriction of $\Pi_h w$ to $R$ is given by 
\begin{equation}\label{PIi}
	\Pi_h(w) =
	\left\{
	\begin{aligned}
		&(\tilde{\Pi}_x\otimes\tilde{\Pi}_v)w\quad\mbox{if sign}\left(\left(\bm{u} - v\right)\cdot \bm{n}\right) = \mbox{constant},
		\\
		&(\tilde{\Pi}_x \otimes \tilde{\mathcal{P}}_v)w \quad\mbox{if sign}\left(\left(\bm{u} - v\right)\cdot \bm{n}\right) \neq \mbox{constant},
	\end{aligned}
	\right.
\end{equation}
where $\tilde{\Pi}_x : \mathcal{C}^0(\Omega_x) \rightarrow X_h$ and $\tilde{\Pi}_v : \mathcal{C}^0(\Omega_v) \rightarrow V_h$ are the $2$-dimensional projection operators, see \cite[Section 4.3, p. 12]{hutridurga2024discontinuous} for details.

The projection operator $\Pi_h$ satisfies the following approximation properties, whose proof is similar to \cite[Lemma 4.1, p. 9]{de2012discontinuous}.
\begin{lem}\label{westimate}
	Let $w \in H^{k+1}(R), k \geq 0$ and let $\Pi_h$ be the projection operator defined through \eqref{PIi}. Then, for all $e \in (\partial T^x \times T^v) \cup (T^x \times \partial T^v)$, 
	\begin{equation}\label{w}
		\|w - \Pi_h w\|_{0,R} + h^\frac{1}{2}\|w - \Pi_h w\|_{0,e} \leq Ch^{k+1}\|w\|_{k+1,R}.
	\end{equation}
\end{lem}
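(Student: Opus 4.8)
The plan is to exploit the tensor-product structure of both the element $R = T^x \times T^v$ and the operator $\Pi_h$, thereby reducing the estimate \eqref{w} to the already-available one-variable approximation and stability properties of the constituent projections $\tilde{\Pi}_x$, $\tilde{\Pi}_v$ and $\tilde{\mathcal{P}}_v$. First I would record the properties of the building blocks: from the construction in \cite[Section 4.3, p. 12]{hutridurga2024discontinuous}, each of $\tilde{\Pi}_x : \mathcal{C}^0(\Omega_x)\to X_h$ and $\tilde{\Pi}_v$ (and likewise the projection $\tilde{\mathcal{P}}_v$) is bounded on $L^2$ uniformly in $h$ and delivers the optimal local rate, i.e. for a smooth $g$ on $T^x$,
\[
\|g - \tilde{\Pi}_x g\|_{0,T^x} + h_x^{1/2}\|g - \tilde{\Pi}_x g\|_{0,\partial T^x} \le C h_x^{k+1}\|g\|_{k+1,T^x},
\]
and analogously for $\tilde{\Pi}_v$ and $\tilde{\mathcal{P}}_v$ on $T^v$. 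Since the mesh is quasi-uniform, $h_x \simeq h_v \simeq h$, so all local constants may be recast in terms of $h$ without degrading the rate.

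Next I would split the error by the telescoping identity for tensor-product operators. Writing the active projection generically as $\Pi_h = A_x \otimes B_v$, with $A_x = \tilde{\Pi}_x$ and $B_v \in \{\tilde{\Pi}_v,\tilde{\mathcal{P}}_v\}$ selected by the sign rule in \eqref{PIi}, I use
\[
I - A_x\otimes B_v = (I - A_x)\otimes I + A_x\otimes(I - B_v),
\]
so that
\[
w - \Pi_h w = \big((I - \tilde{\Pi}_x)\otimes I\big)w + \big(\tilde{\Pi}_x\otimes(I - B_v)\big)w.
\]
For the first summand I apply the $x$-approximation estimate fibrewise in $v$ and integrate in $v$; for the second I invoke the $L^2$-stability of $\tilde{\Pi}_x$ in $x$ together with the $v$-approximation estimate for $B_v$. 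Fubini and the product structure of $\|\cdot\|_{0,R}$ then yield the volume bound $\|w - \Pi_h w\|_{0,R}\le Ch^{k+1}\|w\|_{k+1,R}$.

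For the edge terms I would separate $e \subset \partial T^x \times T^v$ from $e \subset T^x \times \partial T^v$. On an $x$-edge I take the trace in the $x$-variable: the first summand is controlled by the edge estimate for $\tilde{\Pi}_x$ (supplying the $h^{1/2}$ factor to be absorbed by the prefactor), while the second is handled by the $L^2$-trace stability of $\tilde{\Pi}_x$ on $\partial T^x$ combined with the $v$-approximation of $B_v$; the $v$-edge case is entirely symmetric, with the roles of the two variables interchanged.

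The step I expect to be the main obstacle is the case distinction built into \eqref{PIi}: the operator $B_v$ is not fixed but switches between $\tilde{\Pi}_v$ and $\tilde{\mathcal{P}}_v$ according to whether $\mbox{sign}((\bm{u} - v)\cdot \bm{n})$ is constant on the element. I must verify that the decomposition and all the volume and edge bounds hold uniformly across both branches. This ultimately succeeds because $\tilde{\Pi}_v$ and $\tilde{\mathcal{P}}_v$ share the same optimal approximation order and the same $h$-uniform $L^2$-stability, so the constant $C$ in \eqref{w} can be taken as the maximum of the two branch constants. A secondary technical point is to ensure that the elementwise constants are genuinely $h$-independent and that the passage between $h_x$, $h_v$ and $h$ via quasi-uniformity preserves the rate; this mirrors the argument in \cite[Lemma 4.1, p. 9]{de2012discontinuous}.
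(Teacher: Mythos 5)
Your proposal is correct and follows essentially the same route as the paper, which simply delegates to the standard tensor-product argument of \cite[Lemma 4.1, p.~9]{de2012discontinuous}: split $I-\tilde{\Pi}_x\otimes B_v=(I-\tilde{\Pi}_x)\otimes I+\tilde{\Pi}_x\otimes(I-B_v)$, use the componentwise optimal approximation estimates together with $L^2$- and (inverse-)trace stability, and observe that both branches of \eqref{PIi} obey the same bounds. The only detail to make precise is that the ``trace stability'' of $\tilde{\Pi}_x$ should be realized as the discrete trace inequality \eqref{traceeqn} applied to the polynomial $\tilde{\Pi}_x g$ plus $L^2$-stability, after which quasi-uniformity absorbs the resulting $h^{1/2}h_x^{-1/2}$ factor, exactly as you anticipate.
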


Summing estimates \eqref{w}, over all elements of the partition $\mathcal{T}_h$, we obtain
\begin{equation}\label{wprojection}
	\begin{aligned}
		\|w - \Pi_h w\|_{0,\mathcal{T}_h} &+ h^{\frac{1}{2}}\|w - \Pi_h w\|_{0,\Gamma_x\times \mathcal{T}^v_{h}}
		\\
		& + h^{\frac{1}{2}}\|w - \Pi_h w\|_{0,\mathcal{T}^x_{h}\times\Gamma_v} \leq Ch^{k+1}\|w\|_{k+1,\Omega}.
	\end{aligned}
\end{equation}


Using the special projection, split $f - f_h$ as
\begin{equation}\label{p}
	e_f := f - f_h := \left(\Pi_h f - f_h\right) - \left(\Pi_h f - f\right) := \theta_f - \eta_f.
\end{equation}

\begin{lem}\label{lemn}
	Let $f \in C^0(\Omega), \bm{u} \in C^0(\Omega_x)$ and $f_h \in \mathcal{Z}_h$. Then, the following identity holds true
	\begin{align*}
		\mathcal{N}(\bm{u};f,\theta_f) - &\mathcal{N}^h(\bm{u}_h;f_h,\theta_f) = \sum_{T^x \in \mathcal{T}_{h}^x}\int_{\Gamma_v}\int_{T^x}\frac{|(\bm{u}_h - v)\cdot \bm{n}|}{2}\vert[\![\theta_f]\!]\vert^2\,{\rm d}x\,{\rm d}s(v)
		\\
		& + \sum_{R \in \mathcal{T}_{h}}\int_{R} \left((\bm{u} - \bm{u}_h)\cdot\nabla_v f\,\theta_f - \theta_f^2\right)\,{\rm d}x\,{\rm d}v + \mathcal{K}^2(\bm{u}_h - v,f,\theta_f),
	\end{align*}
	where
	\begin{equation}\label{K2}
		\begin{aligned}
			\mathcal{K}^2(\bm{u}_h - v,f,\theta_f) &= \sum_{R \in \mathcal{T}_{h}}\int_R \eta_f (\bm{u}_h - v)\cdot \nabla_v \theta_f\,{\rm d}x\,{\rm d}v 
			\\
			&\qquad+ \sum_{T^x \in \mathcal{T}_{h}^x}\int_{\Gamma_v}\int_{T^x}\{\left((\bm{u}_h - v) \eta_f\right)\}_\alpha\cdot[\![\theta_f]\!]\,{\rm d}x\,{\rm d}s(v)\, .
		\end{aligned}
	\end{equation}
\end{lem}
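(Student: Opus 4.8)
The plan is to isolate three distinct effects: the mismatch between the continuous drift field $\bm{u}$ and its discrete counterpart $\bm{u}_h$, the self-interaction of $\theta_f$, and the projection remainder carried by $\eta_f$. First I would exploit the consistency of the flux \eqref{flux}: since $f\in C^0(\Omega)$ has vanishing jumps across velocity faces and $\bm{u}\in C^0(\Omega_x)$ is single-valued on each such face, the upwind correction in $\{(\bm{u}-v)f\}_\alpha$ is inactive, so $\mathcal{N}(\bm{u};f,\theta_f)$ from \eqref{N} coincides with the discrete form $\mathcal{N}^h$ of \eqref{Nh} driven by the \emph{exact} field $\bm{u}$. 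This lets me write
\[
\mathcal{N}(\bm{u};f,\theta_f)-\mathcal{N}^h(\bm{u}_h;f_h,\theta_f)=\big[\mathcal{N}^h(\bm{u};f,\theta_f)-\mathcal{N}^h(\bm{u}_h;f,\theta_f)\big]+\mathcal{N}^h(\bm{u}_h;f-f_h,\theta_f),
\]
where the second term is linear in its middle slot.

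For the velocity-difference bracket, call it $A$, only the drift field changes. Its volume part is $-\sum_R\int_R f(\bm{u}-\bm{u}_h)\cdot\nabla_v\theta_f$; since $\bm{u}-\bm{u}_h$ is independent of $v$, I would integrate by parts in $v$, using $\nabla_v\cdot\big((\bm{u}-\bm{u}_h)f\big)=(\bm{u}-\bm{u}_h)\cdot\nabla_v f$, to produce the volume contribution $\sum_R\int_R(\bm{u}-\bm{u}_h)\cdot\nabla_v f\,\theta_f$ together with element-boundary terms. The face contributions arising from the two centred fluxes (one with $\bm{u}$, one with $\bm{u}_h$) assemble into $(\bm{u}-\bm{u}_h)f\cdot[\![\theta_f]\!]$ and cancel exactly against these boundary terms, so $A$ collapses to the pure volume term $\sum_R\int_R(\bm{u}-\bm{u}_h)\cdot\nabla_v f\,\theta_f$.

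For the remaining term I would use the splitting $f-f_h=\theta_f-\eta_f$ from \eqref{p} together with linearity to write $\mathcal{N}^h(\bm{u}_h;f-f_h,\theta_f)=\mathcal{N}^h(\bm{u}_h;\theta_f,\theta_f)-\mathcal{N}^h(\bm{u}_h;\eta_f,\theta_f)$. The term $-\mathcal{N}^h(\bm{u}_h;\eta_f,\theta_f)$ is, by the very definition \eqref{K2}, equal to $\mathcal{K}^2(\bm{u}_h-v,f,\theta_f)$. For the diagonal piece I would use $\theta_f(\bm{u}_h-v)\cdot\nabla_v\theta_f=\tfrac12(\bm{u}_h-v)\cdot\nabla_v(\theta_f^2)$, integrate by parts in $v$, and invoke $\nabla_v\cdot(\bm{u}_h-v)=-2$ (here $d=2$) to extract $-\sum_R\int_R\theta_f^2$; then, splitting the weighted flux $\{(\bm{u}_h-v)\theta_f\}_\alpha$ into its centred and upwind parts, the centred part cancels the cell-boundary terms while the upwind part leaves precisely $\sum_{T^x}\int_{\Gamma_v}\int_{T^x}\frac{|(\bm{u}_h-v)\cdot\bm{n}|}{2}|[\![\theta_f]\!]|^2$. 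Adding $A$ to these two contributions then yields the asserted identity.

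The main obstacle is the edge bookkeeping in the two integration-by-parts steps: one must verify that the centred (average) flux contributions cancel the element-boundary terms exactly, keeping careful track of the inward/outward normal convention, the orientation of $[\![\cdot]\!]$, and the weighting $\{\cdot\}_\alpha$ on each velocity face, and confirming that the boundary velocity faces contribute nothing by virtue of the compact support of $f$ and $\theta_f$. Once these cancellations are established, the remaining computations are routine.
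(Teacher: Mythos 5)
Your proposal is correct and is essentially the intended proof: the paper gives no argument beyond citing Lemma 11 of \cite{hutridurga2024discontinuous}, and that argument is exactly your route --- add and subtract to isolate the drift mismatch, use continuity of $f$ to deactivate the upwind correction so that $\mathcal{N}(\bm{u};f,\cdot)$ agrees with the discrete form driven by $\bm{u}$, integrate by parts in $v$ elementwise, identify $-\mathcal{N}^h(\bm{u}_h;\eta_f,\theta_f)$ with $\mathcal{K}^2$, and extract from the diagonal piece $\mathcal{N}^h(\bm{u}_h;\theta_f,\theta_f)$ the term $-\sum_{R}\int_R\theta_f^2$ (via $\nabla_v\cdot(\bm{u}_h-v)=-2$ in $d=2$) together with the upwind dissipation term.

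One point you should be aware of, since you yourself flagged the edge bookkeeping as the main risk: the cancellations you invoke in the bracket $A$ and in the diagonal term hold only when the face terms of $\mathcal{N}$ and $\mathcal{N}^h$ carry the sign obtained by summing the element flux terms of \eqref{bhdef} over $\mathcal{T}_h$ with outward normals (a plus sign), whereas \eqref{N} and \eqref{Nh} as printed carry a minus sign; conversely, your identification $-\mathcal{N}^h(\bm{u}_h;\eta_f,\theta_f)=\mathcal{K}^2$ uses the printed minus sign of \eqref{Nh} against the printed plus sign in \eqref{K2}. These printed definitions are not mutually consistent, so under any single fixed convention your computation produces the stated identity with the face term of \eqref{K2} bearing the opposite sign to the one printed. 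This is a defect of the paper's definitions rather than a gap in your reasoning, and it is harmless downstream because Lemma \ref{K2estimate} estimates both contributions to $\mathcal{K}^2$ in absolute value; but a fully written-out proof must fix one convention at the outset and state the resulting sign in \eqref{K2} accordingly.
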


\begin{proof}
The proof is similar to the \cite[Lemma 11, p.13]{hutridurga2024discontinuous}.
\end{proof}
For the estimation on $e_f := \theta_f - \eta_f$, it is enough to estimate $\theta_f$ as the estimate of $\eta_f$ is already known from Lemma \ref{westimate}. After choosing $\phi_h = \theta_f$ in error equation \eqref{error} with a use of equation \eqref{p} and Lemma \ref{lemn}, we rewrite equation \eqref{error} in $\theta_f$ as

\begin{equation}\label{err1}
	\begin{aligned}
		\frac{1}{2}& \frac{{\rm d}}{{\rm d}t}\|\theta_f\|^2_{0,\mathcal{T}_h} + \sum_{T^v \in \mathcal{T}_{h}^v}\int_{T^v}\int_{\Gamma_x}\frac{|v\cdot \bm{n}|}{2}\vert[\![\theta_f]\!]\vert^2\,{\rm d}s(x)\,{\rm d}v  
		\\
		&+ \sum_{T^x \in \mathcal{T}_{h}^x}\int_{T^x}\int_{\Gamma_v}\frac{|(\bm{u}_h - v)\cdot \bm{n}|}{2}\vert[\![\theta_f]\!]\vert^2\,{\rm d}s(v)\,{\rm d}x = \left(\partial_t\eta_f, \theta_f\right) - \mathcal{K}^1(v,\eta_f,\theta_f) 
		\\
		&\qquad- \sum_{R \in \mathcal{T}_{h}}\int_{R} \left((\bm{u} - \bm{u}_h)\cdot\nabla_v f\,\theta_f - \theta^2_f\right)\,{\rm d}v\,{\rm d}x - \mathcal{K}^2(\bm{u}_h - v,f,\theta_f) ,
	\end{aligned}
\end{equation}	
where,
\begin{equation}\label{K1}
	\mathcal{K}^1(v,\eta_f,\theta_f) = \sum_{R \in \mathcal{T}_h}\int_{R} \eta_f v\cdot\nabla_x \theta_f\,{\rm d}x\,{\rm d}v + \sum_{T^v \in \mathcal{T}_{h}^v}\int_{T^v}\int_{\Gamma_x}\{v \eta_f\}_\beta\cdot[\![\theta_f]\!]\,{\rm d}s(x)\,{\rm d}v\, .
\end{equation}


Next, we will state the results for the estimates of $\mathcal{K}^1$ and $\mathcal{K}^2$, whose proof is similar to the proof of \cite[Lemma 12-13, p. 14]{hutridurga2024discontinuous}.

\begin{lem}\label{K1estimate}
	Let $k \geq 1$ and let 
	\[
	f \in C^0([0,T]; W^{1,\infty}(\Omega) \cap H^{k+2}(\Omega))
	\]
	 be the solution of \eqref{eq:continuous-model}. Further, let $f_h(t) \in \mathcal{Z}_h$ be its approximation satisfying \eqref{bh} and let $\mathcal{K}^1(v,\eta_f,\theta_f)$ be defined as in \eqref{K1}. Assume that the partition $\mathcal{T}_h$ is constructed so that none of the components of $v$ vanish inside any element. Then, the following estimates holds:
	\begin{equation*}\label{k1estimate}
		|\mathcal{K}^1(v,\eta_f,\theta_f)| \leq Ch^{k+1}\left(\|f\|_{k+1,\Omega} + L\|f\|_{k+2,\Omega}\right)\|\theta_f\|_{0,\mathcal{T}_h},
	\end{equation*} 
	for all $t \in [0,T]$.
\end{lem}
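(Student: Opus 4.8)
The plan is to recognise that $\mathcal{K}^1(v,\eta_f,\theta_f)$ is precisely the streaming (i.e. $x$-transport) consistency form evaluated on the projection error $\eta_f=\Pi_h f-f$ and the discrete function $\theta_f$, and then to exploit the defining properties of the special projection $\Pi_h$ from \eqref{PIi} to make the leading-order contributions cancel. Since $\theta_f\in\mathcal{Z}_h$, its $x$-gradient $\nabla_x\theta_f$ is a piecewise polynomial of degree at most $k-1$ in $x$ (and $k$ in $v$), so the volume integrand $\eta_f\,v\cdot\nabla_x\theta_f$ pairs $\eta_f$ against a test object that, up to the weight $v$, lies in the space against which the $x$-component $\tilde\Pi_x$ of the projection is orthogonal. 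The hypothesis that no component of $v$ vanishes inside any element is used here: it guarantees that $\mathrm{sign}(v\cdot\bm{n})$ is constant across each velocity cell on every $x$-edge, so that the upwind weight $\beta=\tfrac12(1\pm\mathrm{sign}(v\cdot\bm{n}))$ unambiguously collapses the weighted average $\{v\eta_f\}_\beta$ to the one-sided outflow trace of $\eta_f$, and the outflow endpoint at which $\tilde\Pi_x$ reproduces $f$ is the same throughout $T^v$.

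Next I would combine the volume and edge sums. Integrating the volume term by parts in $x$ on each element and regrouping the resulting boundary contributions with the edge sum $\sum_{T^v}\int_{T^v}\int_{\Gamma_x}\{v\eta_f\}_\beta\cdot[\![\theta_f]\!]$, the outflow boundary terms are annihilated because $\tilde\Pi_x$ reproduces $f$ there (so the relevant one-sided trace of $\eta_f$ vanishes), while the interior volume residual is killed to leading order by the orthogonality of $\tilde\Pi_x$ against $\mathbb{P}^{k-1}$ in $x$. What survives is a higher-order remainder of two types: one in which the $x$-orthogonality has been used but the $v$-projection part of $\eta_f$ remains, and one coming from the weight $v$, which raises the effective polynomial degree in the velocity variable by one. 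This degree mismatch is exactly why one extra order of regularity, $\|f\|_{k+2,\Omega}$, must be available, and why the bound $|v|\le\sqrt{2}\,L$ on $\Omega_v=[-L,L]^2$ produces the multiplicative factor $L$ in front of it.

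I would then estimate the two residual families by Cauchy--Schwarz. For the volume residual I would use the interior projection estimate from \eqref{wprojection} (with $w=f$) to bound $\|\eta_f\|_{0,\mathcal{T}_h}\le Ch^{k+1}\|f\|_{k+1,\Omega}$, and for the part carrying the extra $v$-weight the analogous bound with $\|f\|_{k+2,\Omega}$. For the edge residual I would use the edge part $h^{1/2}\|\eta_f\|_{0,\Gamma_x\times\mathcal{T}^v_h}\le Ch^{k+1}\|f\|_{k+1,\Omega}$ of \eqref{wprojection} together with the trace inequality \eqref{traceeqn} and the inverse inequality \eqref{eq:inverse} applied to $\theta_f$, so that jump and gradient norms of $\theta_f$ on edges are converted, at the cost of the matching negative powers of $h_x$, back into the single quantity $\|\theta_f\|_{0,\mathcal{T}_h}$. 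Collecting the surviving terms yields
\[
|\mathcal{K}^1(v,\eta_f,\theta_f)|\le Ch^{k+1}\bigl(\|f\|_{k+1,\Omega}+L\|f\|_{k+2,\Omega}\bigr)\|\theta_f\|_{0,\mathcal{T}_h},
\]
as claimed.

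The main obstacle I anticipate is the bookkeeping of the cancellation: one must track exactly which one-sided traces are annihilated by the outflow-matching property of $\tilde\Pi_x$ and which interior moments are annihilated by its $\mathbb{P}^{k-1}$-orthogonality, all while the tensor-product structure $\Pi_h=\tilde\Pi_x\otimes\tilde\Pi_v$ (or $\tilde\Pi_x\otimes\tilde{\mathcal{P}}_v$) entangles the $x$- and $v$-projection errors. Keeping the estimate linear in $\|\theta_f\|_{0,\mathcal{T}_h}$---rather than in the stronger $\vertiii{\cdot}$-type norm that the edge and gradient terms naturally suggest---requires careful use of the inverse and trace inequalities to reabsorb every negative power of $h_x$, and this balancing is where the argument is most delicate.
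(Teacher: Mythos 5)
Your overall strategy is the right one, and it is essentially the approach the paper itself relies on: the paper gives no self-contained proof of Lemma \ref{K1estimate}, but defers to the companion work \cite{hutridurga2024discontinuous} (Lemmas 12--13 there, in the lineage of \cite{de2012discontinuous}), whose argument is exactly the special-projection superconvergence mechanism you describe --- $\mathbb{P}^{k-1}$-orthogonality of $\tilde\Pi_x$ against the volume term, reproduction of the upwind (outflow) trace on $x$-edges, the sign hypothesis on the components of $v$ guaranteeing that the upwind side is well defined throughout each velocity cell, and a final assembly via \eqref{wprojection}, \eqref{traceeqn} and \eqref{eq:inverse}.

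There is, however, a genuine quantitative gap in your final assembly: as written, every residual you keep comes out one full power of $h$ short of the claim. For the volume residual, Cauchy--Schwarz together with \eqref{wprojection} and the inverse inequality \eqref{eq:inverse} gives $\|\eta_f\|_{0,\mathcal{T}_h}\,\|v\cdot\nabla_x\theta_f\|_{0,\mathcal{T}_h}\le C L h^{k+1}\cdot h^{-1}\|\theta_f\|_{0,\mathcal{T}_h}=CLh^{k}\|\theta_f\|_{0,\mathcal{T}_h}$; for the edge residual, $\|\eta_f\|_{0,\Gamma_x\times\mathcal{T}^v_h}\le Ch^{k+1/2}\|f\|_{k+1,\Omega}$ paired with $\|[\![\theta_f]\!]\|_{0,\Gamma_x\times\mathcal{T}^v_h}\le Ch^{-1/2}\|\theta_f\|_{0,\mathcal{T}_h}$ again yields only $O(h^{k})$. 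The cancellations you invoke (orthogonality and outflow-trace reproduction of $\tilde\Pi_x$) annihilate only the part of $\eta_f$ generated by the $x$-factor of the projection; the parts carrying the $v$-projection error $(I-\tilde{\mathcal{P}}_v)$, and the multi-dimensional cross terms of the tensor construction \eqref{PIi}, survive, and for those your plain Cauchy--Schwarz accounting fails by one order. The missing idea is a second orthogonality, in the $v$-variable: since $\partial_{x_i}\theta_f$ and $[\![\theta_f]\!]$ have $v$-degree at most $k$ while the $v$-projection error is orthogonal to $\mathbb{P}^k(T^v)$, the weight $v$ may be replaced by $v-\bar v$ (with $\bar v$ the centre of $T^v$) at no cost, and $|v-\bar v|\le Ch_v$ supplies precisely the factor of $h$ that offsets the $h^{-1}$ (respectively $h^{-1/2}$) losses above; a corresponding superconvergence device is needed for the $x$-cross terms (e.g.\ $(I-\pi_{x_1})$-error against $v_2\,\partial_{x_2}\theta_f$, which has full degree $k$ in $x_1$), and it is from these terms --- not merely from the crude bound $|v|\le\sqrt2\,L$ --- that the contribution $Lh^{k+1}\|f\|_{k+2,\Omega}$ actually arises. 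A smaller point: your integration by parts in $x$ is misdirected, since afterwards the volume integrand involves $\nabla_x\eta_f$, to which the stated $\mathbb{P}^{k-1}$-orthogonality of $\eta_f$ does not apply; it is cleaner to apply the orthogonality to the volume term of \eqref{K1} as it stands (for each fixed $v$, the aligned part of $v\cdot\nabla_x\theta_f$ has degree $\le k-1$ in the differentiated variable) and to treat the edge sum separately through the trace-reproduction property.
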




\begin{lem}\label{K2estimate}
	Let $k \geq 1$ and let $(f_h, \bm{u}_h) \in \mathcal{Z}_h\times \bm{H}_h$ be the solution to \eqref{bh}. Let 
	\[
	(\bm{u},f) \in L^\infty([0,T]; \bm{W}^{1,\infty}\cap \bm{H}^{k+1})\times L^\infty([0,T]; W^{1,\infty}(\Omega)\cap H^{k+2}(\Omega))
	\]
	 and let $\mathcal{K}^2$ is defined as in \eqref{K2}. Then, the following estimate holds
	\begin{equation}\label{k2estimate}
		\begin{aligned}
			|\mathcal{K}^2(\bm{u}_h - v,f,\theta_f)&| \leq Ch^k\|\bm{u}_h - \bm{u}\|_{\bm{L}^\infty}\|f\|_{k+1,\Omega}\|\theta_f\|_{0,\mathcal{T}_h} 
			\\
			&+ Ch^{k+1}\left(\|\bm{u}\|_{\bm{W}^{1,\infty}}\|f\|_{k+1,\Omega} + \left(L + \|\bm{u}\|_{\bm{L}^\infty}\right)\|f\|_{k+2,\Omega}\right)\|\theta_f\|_{0,\mathcal{T}_h}.
		\end{aligned}
	\end{equation}		
\end{lem}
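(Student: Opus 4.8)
The plan is to split the convective field as $\bm{u}_h - v = (\bm{u}_h - \bm{u}) + (\bm{u} - v)$ and to treat the two resulting groups of terms by entirely different mechanisms, since the first is responsible for the $h^k$ contribution and the second for the $h^{k+1}$ contributions. Throughout, I would use that the weighted average $\{\cdot\}_\alpha$ with $\alpha \in [0,1]$ is a convex combination of the one-sided traces, so that pointwise $|\{(\bm{w})\eta_f\}_\alpha| \le \max(|\bm{w}^-\eta_f^-|,|\bm{w}^+\eta_f^+|)$ on each velocity edge; this lets me bound the edge terms of \eqref{K2} without tracking $\alpha$ explicitly in the non-cancelling pieces.

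First I would isolate the part of $\mathcal{K}^2$ carrying the factor $\bm{u}_h - \bm{u}$, namely $\sum_{R}\int_R \eta_f(\bm{u}_h-\bm{u})\cdot\nabla_v\theta_f\,{\rm d}x\,{\rm d}v$ together with its edge counterpart. For the volume part I apply the Cauchy--Schwarz inequality, pull out $\|\bm{u}_h-\bm{u}\|_{\bm{L}^\infty}$, use the projection bound $\|\eta_f\|_{0,\mathcal{T}_h}\le Ch^{k+1}\|f\|_{k+1,\Omega}$ from Lemma \ref{westimate}, and absorb the gradient by the inverse inequality \eqref{eq:inverse}, $\|\nabla_v\theta_f\|_{0,\mathcal{T}_h}\le Ch^{-1}\|\theta_f\|_{0,\mathcal{T}_h}$; the product $h^{k+1}\cdot h^{-1}$ produces exactly $h^k$. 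For the edge part I would use the edge bound $\|\eta_f\|_{0,\mathcal{T}^x_h\times\Gamma_v}\le Ch^{k+1/2}\|f\|_{k+1,\Omega}$ from \eqref{wprojection} together with $\|[\![\theta_f]\!]\|_{0,\mathcal{T}^x_h\times\Gamma_v}\le Ch^{-1/2}\|\theta_f\|_{0,\mathcal{T}_h}$ coming from the trace inequality \eqref{traceeqn} and the inverse inequality; again $h^{k+1/2}\cdot h^{-1/2}=h^k$. Summing yields the first term $Ch^k\|\bm{u}_h-\bm{u}\|_{\bm{L}^\infty}\|f\|_{k+1,\Omega}\|\theta_f\|_{0,\mathcal{T}_h}$.

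Next I would treat the part carrying the genuine transport field $\bm{u}-v$, which must be shown to be $O(h^{k+1})$; a crude use of the inverse inequality here would only give $h^k$, so the special projection is essential. I would further split $\bm{u}-v=(\bar{\bm{u}}-v)+(\bm{u}-\bar{\bm{u}})$, where $\bar{\bm{u}}$ is the elementwise mean (or $L^2$-projection) of $\bm{u}$ on $T^x$, so that $\|\bm{u}-\bar{\bm{u}}\|_{\bm{L}^\infty}\le Ch\|\bm{u}\|_{\bm{W}^{1,\infty}}$. The smooth remainder $\bm{u}-\bar{\bm{u}}$ is handled exactly as in the previous paragraph (Cauchy--Schwarz, projection bound, inverse inequality), but the extra factor $h$ upgrades $h^k$ to $h^{k+1}$ and produces the term $Ch^{k+1}\|\bm{u}\|_{\bm{W}^{1,\infty}}\|f\|_{k+1,\Omega}\|\theta_f\|_{0,\mathcal{T}_h}$.

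The remaining and genuinely hard piece is the affine field $\bar{\bm{u}}-v$, for which the inverse inequality cannot be used. Here the plan is to invoke the defining orthogonality and super-approximation properties of the special projection $\Pi_h$ in \eqref{PIi}: on elements where $\mathrm{sign}((\bm{u}-v)\cdot\bm{n})$ is constant one uses the full tensor projection $\tilde{\Pi}_x\otimes\tilde{\Pi}_v$, whose Radau-type construction in $v$ renders $\eta_f$ orthogonal to $\nabla_v\theta_f$ for affine coefficients up to matching boundary contributions; integrating by parts in $v$ on each $R$ and pairing the resulting volume terms against the edge terms of \eqref{K2} makes the leading-order contributions cancel. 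What survives is controlled by Lemma \ref{westimate} applied at one higher order, which is why the norm $\|f\|_{k+2,\Omega}$ appears, while the magnitude $|\bar{\bm{u}}-v|\le \|\bm{u}\|_{\bm{L}^\infty}+L$ on $\Omega_v=[-L,L]^2$ supplies the prefactor, giving $Ch^{k+1}(L+\|\bm{u}\|_{\bm{L}^\infty})\|f\|_{k+2,\Omega}\|\theta_f\|_{0,\mathcal{T}_h}$. The main obstacle is precisely this cancellation: one must carefully align the case distinction in the definition of $\Pi_h$ (constant versus sign-changing $(\bm{u}-v)\cdot\bm{n}$) with the corresponding numerical fluxes, as in \cite[Lemmas 12--13]{hutridurga2024discontinuous}, so as not to lose a power of $h$. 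Collecting the three contributions yields \eqref{k2estimate}.
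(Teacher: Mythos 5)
Your overall architecture is sound and, for the pieces you make explicit, matches the argument this paper actually relies on: the paper does not write out a proof at all but defers to \cite[Lemmas 12--13]{hutridurga2024discontinuous}, whose proof begins exactly as yours does, splitting $\bm{u}_h-v=(\bm{u}_h-\bm{u})+(\bm{u}-v)$ and disposing of the first piece by projection bounds plus inverse and trace inequalities to get the $Ch^k\|\bm{u}_h-\bm{u}\|_{\bm{L}^\infty}\|f\|_{k+1,\Omega}\|\theta_f\|_{0,\mathcal{T}_h}$ term. Your further splitting $\bm{u}-v=(\bm{u}-\bar{\bm{u}})+(\bar{\bm{u}}-v)$ with the elementwise mean, using $\|\bm{u}-\bar{\bm{u}}\|_{\bm{L}^\infty}\le Ch\|\bm{u}\|_{\bm{W}^{1,\infty}}$, is a legitimate variant for extracting the $h^{k+1}\|\bm{u}\|_{\bm{W}^{1,\infty}}\|f\|_{k+1,\Omega}$ contribution.

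The genuine gap is in your treatment of the affine field $\bar{\bm{u}}-v$. The cancellation mechanism you invoke --- Radau-type endpoint matching of $\tilde{\Pi}_v$ combined with integration by parts in $v$ and pairing against the flux terms --- is available, by the very definition \eqref{PIi}, only on elements where ${\rm sign}((\bm{u}-v)\cdot\bm{n})$ is constant. On the remaining elements one has $\Pi_h=\tilde{\Pi}_x\otimes\tilde{\mathcal{P}}_v$ with $\tilde{\mathcal{P}}_v$ the plain $L^2$ projection in $v$: there is no endpoint-matching property, the outflow boundary contributions of $\eta_f$ do not vanish, and no volume/edge cancellation can be arranged. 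On those cells your stated estimates degrade to $Ch^k\left(L+\|\bm{u}\|_{\bm{L}^\infty}\right)\|f\|_{k+1,\Omega}\|\theta_f\|_{0,\mathcal{T}_h}$, which is precisely the loss of a power of $h$ you say must be avoided; calling the case distinction ``the main obstacle'' and pointing to the citation does not supply the missing mechanism. What actually saves the sign-changing cells in \cite{de2012discontinuous} and \cite{hutridurga2024discontinuous} is a smallness argument, not a cancellation: since $(\bm{u}-v)\cdot\bm{n}$ is continuous and Lipschitz (with constant at most $1+\|\nabla_x\bm{u}\|_{\bm{L}^\infty}$) and changes sign inside such an element, it vanishes at some point of that element, whence $|(\bm{u}-v)\cdot\bm{n}|\le Ch\left(1+\|\nabla_x\bm{u}\|_{\bm{L}^\infty}\right)$ there, and consequently $|(\bar{\bm{u}}-v)\cdot\bm{n}|\le Ch\left(1+\|\bm{u}\|_{\bm{W}^{1,\infty}}\right)$ after your freezing step. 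This $O(h)$ factor is what upgrades the crude inverse-inequality bound from $h^k$ to $h^{k+1}$ on sign-changing cells and is a source of the $\|\bm{u}\|_{\bm{W}^{1,\infty}}\|f\|_{k+1,\Omega}$ term in \eqref{k2estimate}. Your proposal closes once you add this case explicitly: treat sign-changing cells crudely using the smallness, and reserve the Radau cancellation --- keeping the upwind flux structure intact there rather than bounding it by your convex-combination inequality --- for the sign-constant cells.
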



\subsection{Optimal error estimates:}

\begin{lem}\label{fl2}
	For $k \geq 1$, let 
	\[
	f \in \mathcal{C}^1([0,T];H^{k+2}(\Omega)\cap W^{1,\infty}(\Omega))
	\]
	 be the solution of the Vlasov-Stokes problem \eqref{eq:continuous-model}-\eqref{contstokes} and let 
	 \[
	 \bm{u} \in \mathcal{C}^0([0,T];\bm{H}^{k+1}\cap \bm{W}^{1,\infty})
	 \]
	  be the associated fluid velocity. Further, let $(\bm{u}_h,f_h) \in \bm{H}_h \times \mathcal{Z}_h $ be the dG-dG approximation solution of \eqref{bh} and \eqref{ch}. Then there holds
	\begin{equation}\label{thetafest}
		\begin{aligned}
			\frac{{\rm d}}{{\rm d}t}\|\theta_f\|_{0,\mathcal{T}_h} \leq C_{\bm{u},f}h^{k+1} + C_f\|\bm{u} - \bm{u}_h\|_{\bm{L}^2} + \|\theta_f\|_{0,\mathcal{T}_h},
		\end{aligned}
	\end{equation}
where $C$ depends on the final time $T$, the polynomial degree $k$, the shape regularity of the partition and also regularity result of $(\bm{u},f)$. 
\end{lem}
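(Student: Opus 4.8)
The plan is to start from the evolution equation \eqref{err1} for $\tfrac12\tfrac{{\rm d}}{{\rm d}t}\|\theta_f\|_{0,\mathcal{T}_h}^2$, bound each term on its right-hand side, and then divide through by $\|\theta_f\|_{0,\mathcal{T}_h}$ to convert the differential inequality in $\|\theta_f\|^2$ into one for $\|\theta_f\|$. Note first that the two jump-penalty terms on the left of \eqref{err1} (the $\tfrac{|v\cdot\bm{n}|}{2}|[\![\theta_f]\!]|^2$ and $\tfrac{|(\bm{u}_h-v)\cdot\bm{n}|}{2}|[\![\theta_f]\!]|^2$ integrals) are non-negative, so they may be discarded to obtain an upper bound on $\tfrac12\tfrac{{\rm d}}{{\rm d}t}\|\theta_f\|_{0,\mathcal{T}_h}^2$. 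What remains is to estimate the four terms $(\partial_t\eta_f,\theta_f)$, $-\mathcal{K}^1(v,\eta_f,\theta_f)$, $-\sum_R\int_R((\bm{u}-\bm{u}_h)\cdot\nabla_v f\,\theta_f-\theta_f^2)\,{\rm d}x\,{\rm d}v$, and $-\mathcal{K}^2(\bm{u}_h-v,f,\theta_f)$.

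I would handle these in turn. For the projection-error term, Cauchy--Schwarz gives $(\partial_t\eta_f,\theta_f)\le\|\partial_t\eta_f\|_{0,\mathcal{T}_h}\|\theta_f\|_{0,\mathcal{T}_h}$, and since $\eta_f=\Pi_h f-f$ and $\Pi_h$ commutes suitably with $\partial_t$, the approximation estimate \eqref{wprojection} applied to $\partial_t f$ bounds $\|\partial_t\eta_f\|_{0,\mathcal{T}_h}$ by $Ch^{k+1}\|\partial_t f\|_{k+1,\Omega}$, which is absorbed into $C_{\bm{u},f}h^{k+1}$ using the regularity $f\in\mathcal{C}^1([0,T];H^{k+2})$. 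The terms $\mathcal{K}^1$ and $\mathcal{K}^2$ are controlled directly by the already-established Lemma \ref{K1estimate} and Lemma \ref{K2estimate}; the $\mathcal{K}^2$ bound \eqref{k2estimate} contributes both an $h^{k+1}$ piece and the crucial term $Ch^k\|\bm{u}_h-\bm{u}\|_{\bm{L}^\infty}\|f\|_{k+1,\Omega}\|\theta_f\|_{0,\mathcal{T}_h}$. For the remaining volume integral, I split it: the $\theta_f^2$ part is exactly $\|\theta_f\|_{0,\mathcal{T}_h}^2$, giving the final $\|\theta_f\|_{0,\mathcal{T}_h}$ summand in \eqref{thetafest}, while $\int_R(\bm{u}-\bm{u}_h)\cdot\nabla_v f\,\theta_f$ is bounded via Cauchy--Schwarz by $\|\bm{u}-\bm{u}_h\|_{\bm{L}^\infty}\|\nabla_v f\|_{0,\mathcal{T}_h}\|\theta_f\|_{0,\mathcal{T}_h}$, i.e.\ controlled by $\|\bm{u}-\bm{u}_h\|_{\bm{L}^\infty}\|f\|_{1,\Omega}\|\theta_f\|_{0,\mathcal{T}_h}$.

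The main obstacle is reconciling the $L^\infty$-norm of the velocity error with the $L^2$-norm appearing in the statement \eqref{thetafest}. The estimates for both this volume term and the $\mathcal{K}^2$ term naturally produce $\|\bm{u}-\bm{u}_h\|_{\bm{L}^\infty}$, yet the lemma asserts a clean $C_f\|\bm{u}-\bm{u}_h\|_{\bm{L}^2}$ dependence. The resolution is to invoke Lemma \ref{L:uinf}, which bounds $\|\bm{u}-\bm{u}_h\|_{\bm{L}^\infty}$ by $h_x\|\bm{u}\|_{\bm{W}^{1,\infty}}+h_x^k\|\bm{u}\|_{k+1,2}+h_x^{-1}\|\bm{u}-\bm{u}_h\|_{\bm{L}^2}$. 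The first two pieces are of order $h^{k+1}$ (after multiplication by the surviving $h^k$ factor from \eqref{k2estimate}, noting $k\ge1$) and fold into $C_{\bm{u},f}h^{k+1}$; the $h_x^{-1}\|\bm{u}-\bm{u}_h\|_{\bm{L}^2}$ piece, when multiplied by the $h^k$ prefactor of the $\mathcal{K}^2$ leading term, yields $h^{k-1}\|\bm{u}-\bm{u}_h\|_{\bm{L}^2}$; the delicate accounting is to verify that across the $h^k$ and $h^{k+1}$ prefactors of the two offending terms the net power of $h$ multiplying $\|\bm{u}-\bm{u}_h\|_{\bm{L}^2}$ is non-negative (so it can be bounded by a constant $C_f$ independent of $h$ for $k\ge1$).

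Collecting everything, after dropping the non-negative jump terms and dividing the inequality for $\tfrac12\tfrac{{\rm d}}{{\rm d}t}\|\theta_f\|^2=\|\theta_f\|\tfrac{{\rm d}}{{\rm d}t}\|\theta_f\|$ by $\|\theta_f\|_{0,\mathcal{T}_h}$ (the degenerate case $\|\theta_f\|=0$ being handled by a standard limiting or regularization argument), every bound has an extra factor of $\|\theta_f\|_{0,\mathcal{T}_h}$ that cancels, leaving precisely
\[
\frac{{\rm d}}{{\rm d}t}\|\theta_f\|_{0,\mathcal{T}_h} \le C_{\bm{u},f}h^{k+1} + C_f\|\bm{u}-\bm{u}_h\|_{\bm{L}^2} + \|\theta_f\|_{0,\mathcal{T}_h},
\]
which is the claimed estimate \eqref{thetafest}. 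The constants absorb the final time $T$, the degree $k$, the mesh shape-regularity, and the stated norms of $(\bm{u},f)$, as asserted.
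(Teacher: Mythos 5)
Your overall structure mirrors the paper's proof: start from \eqref{err1}, drop the non-negative jump terms, bound $(\partial_t\eta_f,\theta_f)$ by Cauchy--Schwarz and \eqref{wprojection}, invoke Lemmas \ref{K1estimate} and \ref{K2estimate}, convert the $\|\bm{u}-\bm{u}_h\|_{\bm{L}^\infty}$ in the $\mathcal{K}^2$ bound via Lemma \ref{L:uinf}, and divide through by $\|\theta_f\|_{0,\mathcal{T}_h}$. However, there is a genuine gap in your treatment of the volume term $I_2=\sum_{R}\int_R(\bm{u}-\bm{u}_h)\cdot\nabla_v f\,\theta_f\,{\rm d}v\,{\rm d}x$. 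You apply H\"older with the norms on the wrong factors, bounding $I_2$ by $\|\bm{u}-\bm{u}_h\|_{\bm{L}^\infty}\|\nabla_v f\|_{0,\mathcal{T}_h}\|\theta_f\|_{0,\mathcal{T}_h}$. Unlike the $\mathcal{K}^2$ term, this bound carries \emph{no} positive power of $h$ in front of the velocity error, so when you invoke Lemma \ref{L:uinf} the piece $h_x^{-1}\|\bm{u}-\bm{u}_h\|_{\bm{L}^2}$ enters with net prefactor $h^{-1}$, not $h^{k-1}$. Your claim that ``across the $h^k$ and $h^{k+1}$ prefactors of the two offending terms the net power of $h$ \ldots is non-negative'' is therefore false for this term: you end up with $h^{-1}\|\bm{u}-\bm{u}_h\|_{\bm{L}^2}$ (plus uncompensated $O(h)$ and $O(h^k)$ pieces, which are also not $O(h^{k+1})$), and this cannot be absorbed into an $h$-independent constant $C_f$. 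The resulting inequality is strictly weaker than \eqref{thetafest}, and the loss is fatal downstream: in the Gr\"onwall argument of Theorem \ref{fi} a \emph{linear} term of size $h^{-1}\|\bm{\theta_u}\|_{\bm{L}^2}$ would produce a factor $e^{CT/h}$, destroying the convergence rate.

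The fix — and what the paper actually does — is to exploit that $\bm{u}-\bm{u}_h$ depends on $x$ alone and that $\nabla_v f\in L^\infty(\Omega)$ by the hypothesis $f\in W^{1,\infty}(\Omega)$. Applying H\"older with $L^\infty$ on $\nabla_v f$ and $L^2$ on the velocity error (integrating first in $v$, where the compact support $\Omega_v=[-L,L]^2$ supplies the constant) gives directly
\begin{equation*}
|I_2|\;\leq\; C_L\,\|\nabla_v f\|_{L^\infty(\Omega)}\,\|\bm{u}-\bm{u}_h\|_{\bm{L}^2}\,\|\theta_f\|_{0,\mathcal{T}_h},
\end{equation*}
with no inverse estimate and no negative power of $h$. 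This produces the clean $C_f\|\bm{u}-\bm{u}_h\|_{\bm{L}^2}$ contribution in \eqref{thetafest}; Lemma \ref{L:uinf} is needed only for the $\mathcal{K}^2$ term, where the $h^k$ prefactor from \eqref{k2estimate} absorbs the $h^{-1}$ precisely because $k\geq 1$, exactly as you argued for that term. With this single correction your argument coincides with the paper's proof.
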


\begin{proof}
From equation \eqref{err1}, there follows
\begin{equation}\label{estimate}
	\begin{aligned}
		\frac{1}{2}\frac{{\rm d}}{{\rm d}t}\|\theta_f\|^2_{0,\mathcal{T}_h} &+ \frac{1}{2}\||v\cdot \bm{n}|^\frac{1}{2}[\![\theta_f]\!]\|^2_{\Gamma_x\times \mathcal{T}^v_{h}} + \frac{1}{2}\||(\bm{u}_h - v)\cdot \bm{n}|^\frac{1}{2}[\![\theta_f]\!]\|^2_{\mathcal{T}^x_{h}\times \Gamma_v}
		\\
		& = I_1 - I_2 - \mathcal{K}^1 - \mathcal{K}^2 + I_3 \leq |I_1| + |I_2| + |\mathcal{K}^1| + |\mathcal{K}^2| + |I_3|.
	\end{aligned}
\end{equation}
Here, we use the standard triangle inequality and
\[
I_1 = \left(\partial_t\eta_f, \theta_f\right), \,\, I_2 = \sum_{R \in \mathcal{T}_{h}}\int_R(\bm{u} - \bm{u}_h)\cdot \nabla_v f \,\theta_f\,{\rm d}v\,{\rm d}x
\,\,\mbox{and}\,\,
I_3 = \sum_{R \in \mathcal{T}_{h}}\int_R\theta^2_f\,{\rm d}v\,{\rm d}x.
\]
First estimate $I_1$, a use of the Cauchy-Schwarz inequality with \eqref{wprojection} yields
\begin{equation}\label{I1estimate}
	\begin{aligned}
		|I_1| \leq \|\partial_t\eta_f\|_{0,\mathcal{T}_h}\|\theta_f\|_{0,\mathcal{T}_h} &\leq Ch^{k+1}\|f_t\|_{k+1,\mathcal{T}_h}\|\theta_f\|_{0,\mathcal{T}_h}
	\end{aligned}
\end{equation}
For $I_2$, 
\begin{equation}\label{I2estimate}
	\begin{aligned}
		|I_2| = \Big\vert\sum_{R \in \mathcal{T}_{h}}\int_R(\bm{u} - \bm{u}_h)\cdot \nabla_v f \theta_f\,{\rm d}v\,{\rm d}x\Big\vert \leq \|\bm{u} - \bm{u}_h\|_{\bm{L}^2}\|\nabla_v f\|_{L^\infty(\Omega)}\|\theta_f\|_{0,\mathcal{T}_h}
	\end{aligned}
\end{equation}
here, in the second step use the H\"older inequality.\\
Now, $\mathcal{K}^1$ is estimated in Lemma \ref{K1estimate},
\begin{equation}\label{K1Estimate}
	|\mathcal{K}^1| \leq Ch^{k+1}\left(\|f\|_{k+1,\Omega} + L\|f\|_{k+2,\Omega}\right)\|\theta_f\|_{0,\mathcal{T}_h}.
\end{equation}
To deal with $\mathcal{K}^2$, we observe that the bound \eqref{k2estimate} in Lemma \ref{K2estimate} yields
\begin{equation}\label{K2Estimate}
	\begin{aligned}
		|\mathcal{K}^2| &\leq Ch^k\|\bm{u} - \bm{u}_h\|_{\bm{L}^\infty}\|f\|_{k+1,\Omega}\|\theta_f\|_{0,\mathcal{T}_h} 
		\\
		&\quad + Ch^{k+1}\left(\|\bm{u}\|_{\bm{W}^{1,\infty}}\|f\|_{k+1,\Omega} + \left(L + \|\bm{u}\|_{\bm{L}^\infty}\right)\|f\|_{k+2,\Omega}\right)\|\theta_f\|_{0,\mathcal{T}_h}
		\\
		&\leq Ch^k\left(h\|\bm{u}\|_{\bm{W}^{1,\infty}} + h^k\|\bm{u}\|_{\bm{H}^{k+1}} + h^{-1}\|\bm{u} - \bm{u}_h\|_{\bm{L}^2}\right)\|f\|_{k+1,\Omega}\|\theta_f\|_{0,\mathcal{T}_h} 
		\\
		&\quad + Ch^{k+1}\left(\|\bm{u}\|_{\bm{W}^{1,\infty}}\|f\|_{k+1,\Omega} + \left(L + \|\bm{u}\|_{\bm{L}^\infty}\right)\|f\|_{k+2,\Omega}\right)\|\theta_f\|_{0,\mathcal{T}_h}
		\\
		&\leq \left(C_{\bm{u},f} h^{k+1} + C_f\|\bm{u} - \bm{u}_h\|_{\bm{L}^2}\right)\|\theta_f\|_{0,\mathcal{T}_h}.
	\end{aligned}
\end{equation}
Here, in second step we use Lemma \ref{L:uinf}.\\
Now, substituting the estimates \eqref{I1estimate}-\eqref{K2Estimate} into \eqref{estimate} and using the fact that the last two terms on the left hand side of the equation \eqref{estimate} is non-negative, we obtain \eqref{thetafest} and this completes the proof.
\end{proof}


\begin{thm}\label{fi}
Let $k \geq 1$ and let 
\[
(\bm{u},p) \in \mathcal{C}^1([0,T];\bm{H}^{k+1}\cap \bm{W}^{1,\infty}) \times \mathcal{C}^0([0,T];H^k(\Omega_x)) 
\]
\[
f \in \mathcal{C}^1([0,T];H^{k+2}(\Omega)\cap W^{1,\infty}(\Omega)) 
\]
 be the solution of the Vlasov-Stokes equation \eqref{eq:continuous-model}-\eqref{contstokes}. Let 
 \[
 (\bm{u}_h,p_h,f_h) \in \mathcal{C}^0([0,T];\bm{H}_h\times L_h) \times \mathcal{C}^1([0,T];\mathcal{Z}_h)
 \]
  be the dG-dG approximation solutions of \eqref{bh} and\eqref{ch}, then 
\[
\|f(t) - f_h(t)\|_{L^\infty(0,T;L^2(\Omega))} + \|(\bm{u} - \bm{u}_h)(t)\|_{L^\infty(0,T;\bm{L}^2)} \leq Ch^{k+1} \quad \forall\,\,t \in [0,T]
\] 
where $C$ depends on the final time $T$, the polynomial degree $k$, the shape regularity of the partition and norm of $(\bm{u},f)$.
\end{thm}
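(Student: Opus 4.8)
The plan is to fuse the two one-sided differential inequalities already in hand—Lemma~\ref{ubound} for the fluid error $\bm{\theta_u}$ and Lemma~\ref{fl2} for the kinetic error $\theta_f$—into a single estimate for the combined quantity $\Phi(t) := \|\bm{\theta_u}(t)\|^2_{\bm{L}^2} + \|\theta_f(t)\|^2_{0,\mathcal{T}_h}$, and then to close that estimate by a nonlinear Grönwall argument. First I would record the triangle-inequality consequences of the splittings \eqref{ues} and \eqref{p}, namely $\|\bm{u} - \bm{u}_h\|_{\bm{L}^2} \leq \|\bm{\theta_u}\|_{\bm{L}^2} + \|\bm{\eta_u}\|_{\bm{L}^2}$ and $\|f - f_h\|_{0,\mathcal{T}_h} \leq \|\theta_f\|_{0,\mathcal{T}_h} + \|\eta_f\|_{0,\mathcal{T}_h}$, together with the projection bounds $\|\bm{\eta_u}\|_{\bm{L}^2} \leq Ch^{k+1}$ from \eqref{uestimat} and $\|\eta_f\|_{0,\mathcal{T}_h} \leq Ch^{k+1}$ from \eqref{wprojection}. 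These let me replace the full errors on the right-hand sides of the two lemmas by the projected errors plus an $O(h^{k+1})$ remainder, decoupling the data that the two equations feed to each other.

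Next I would process each lemma separately. In Lemma~\ref{fl2} I multiply through by $\|\theta_f\|_{0,\mathcal{T}_h}$ to convert $\frac{{\rm d}}{{\rm d}t}\|\theta_f\|_{0,\mathcal{T}_h}$ into $\tfrac12\frac{{\rm d}}{{\rm d}t}\|\theta_f\|^2_{0,\mathcal{T}_h}$, substitute $\|\bm{u} - \bm{u}_h\|_{\bm{L}^2} \leq \|\bm{\theta_u}\|_{\bm{L}^2} + Ch^{k+1}$, and apply Young's inequality to every cross term, which yields
\begin{equation*}
	\frac{{\rm d}}{{\rm d}t}\|\theta_f\|^2_{0,\mathcal{T}_h} \leq C\left(\|\bm{\theta_u}\|^2_{\bm{L}^2} + \|\theta_f\|^2_{0,\mathcal{T}_h} + h^{2(k+1)}\right).
\end{equation*}
In Lemma~\ref{ubound} I first drop the non-negative terms $\beta\vertiii{\bm{\theta_u}}^2$ and $s_h(\theta_p,\theta_p)$ on the left, then insert $\|f - f_h\|_{0,\mathcal{T}_h} \leq \|\theta_f\|_{0,\mathcal{T}_h} + Ch^{k+1}$. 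The benign contributions are again absorbed by Young's inequality, while the final term of the bound produces the genuinely nonlinear piece $Ch^{-1}\|\theta_f\|_{0,\mathcal{T}_h}\|\bm{\theta_u}\|^2_{\bm{L}^2}$ (its companion $Ch^{-1}\cdot h^{k+1}\|\bm{\theta_u}\|^2_{\bm{L}^2} = Ch^{k}\|\bm{\theta_u}\|^2_{\bm{L}^2}$ is harmless for $k\geq1$). Adding the two inequalities and bounding $\|\theta_f\|_{0,\mathcal{T}_h}\|\bm{\theta_u}\|^2_{\bm{L}^2} \leq \Phi^{3/2}$ produces the master inequality
\begin{equation*}
	\frac{{\rm d}}{{\rm d}t}\Phi \leq C\Phi + Ch^{2(k+1)} + Ch^{-1}\Phi^{3/2}, \qquad \Phi(0) \leq Ch^{2(k+1)},
\end{equation*}
where the bound on $\Phi(0)$ holds because the discrete initial data are $L^2$-projections of $(f_0,\bm{u}_0)$, so $\bm{\theta_u}(0)$ and $\theta_f(0)$ are differences of two projections of the same function and are therefore each $O(h^{k+1})$ by the triangle inequality and the projection estimates.

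I expect the closing of this master inequality to be the main obstacle, precisely because of the negative power $h^{-1}$ multiplying $\Phi^{3/2}$: a naive Grönwall estimate would lose a factor $h^{-1}$. The remedy is a continuation (bootstrap) argument implementing the nonlinear Grönwall inequality. Fix $M>0$ and let $T^\ast$ be the largest time on which $\Phi(t) \leq Mh^{2(k+1)}$. On $[0,T^\ast]$ one has $h^{-1}\Phi^{3/2} = h^{-1}\Phi^{1/2}\,\Phi \leq M^{1/2}h^{k}\Phi$, so for $h$ small the nonlinear term is absorbed into the linear one, giving $\frac{{\rm d}}{{\rm d}t}\Phi \leq (C + M^{1/2}h^{k})\Phi + Ch^{2(k+1)}$. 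The \emph{linear} Grönwall inequality then yields $\Phi(t) \leq \widetilde{C}h^{2(k+1)}$ on $[0,T^\ast]$ with $\widetilde{C}=\widetilde{C}(T)$ \emph{independent of} $M$. Choosing $M > \widetilde{C}$ shows the threshold is never reached, hence $T^\ast = T$ and $\Phi(t) \leq \widetilde{C}h^{2(k+1)}$ throughout $[0,T]$. Finally, the triangle inequalities from the first step upgrade this to $\|f - f_h\|_{0,\mathcal{T}_h} \leq Ch^{k+1}$ and $\|\bm{u} - \bm{u}_h\|_{\bm{L}^2} \leq Ch^{k+1}$ uniformly in $t$, which is exactly the assertion of the theorem.
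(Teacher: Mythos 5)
Your proposal is correct, and its overall architecture coincides with the paper's: the same splittings \eqref{ues} and \eqref{p}, the same two ingredient estimates (Lemma \ref{ubound} and Lemma \ref{fl2}), and the same reduction to a differential inequality whose dangerous feature is the $h^{-1}$-weighted superlinear term. Where you genuinely diverge is in how the nonlinear Gr\"onwall step is closed. The paper works with the first-power quantity $\vertiii{(\bm{\theta_u},\theta_f)} = \|\bm{\theta_u}\|_{\bm{L}^2} + \|\theta_f\|_{0,\mathcal{T}_h}$, introduces the majorant $\Psi(t) = h^{k+1} + \int_0^t\left(\vertiii{(\bm{\theta_u},\theta_f)} + h^{-1}\vertiii{(\bm{\theta_u},\theta_f)}^2\right){\rm d}s$, and integrates the separable Bernoulli-type inequality $\partial_t\Psi \leq C\left(\Psi + h^{-1}\Psi^2\right)$ in closed form, arriving at $\Psi(t)\left(1 - h^{k}\left(e^{CT}-1\right)\right) \leq e^{CT}h^{k+1}$ and then imposing smallness of $h$ so the prefactor is positive. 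You instead work with the squared quantity $\Phi$, obtain the $\Phi^{3/2}$ nonlinearity, and close by a continuation argument: on the set where $\Phi \leq M h^{2(k+1)}$ the nonlinear term is absorbed as $h^{-1}\Phi^{3/2} \leq M^{1/2}h^k\Phi$, linear Gr\"onwall gives a constant $\widetilde{C}$ independent of $M$, and choosing $M > \widetilde{C}$ shows the threshold is never attained. Both routes require the same kind of smallness condition on $h$ (of size $h^k \lesssim$ a $T$-dependent constant), so neither is stronger in its conclusion; your bootstrap is the more standard and more robust device (it does not rely on the ODE being explicitly integrable), while the paper's explicit integration yields a cleaner, fully explicit constant. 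One further point in your favor: you justify the initial bound $\Phi(0) \leq Ch^{2(k+1)}$ by noting that $\bm{\theta_u}(0)$ and $\theta_f(0)$ are differences of two projections of the same data, whereas the paper simply sets $\Psi(0) = h^{k+1}$ and leaves this point implicit.
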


\begin{proof}
A use of equations \eqref{ues} and \eqref{p} with triangle inequality implies
\begin{equation*}
	\begin{aligned}
		\|\bm{u} - \bm{u}_h\|_{\bm{L}^2} + \|f - f_h\|_{0,\mathcal{T}_h} \leq \|\bm{\eta_u}\|_{\bm{L}^2} + \|\bm{\theta_u}\|_{\bm{L}^2} + \|\eta_f\|_{0,\mathcal{T}_h} + \|\theta_f\|_{0,\mathcal{T}_h}.
	\end{aligned}
\end{equation*}
Now, from equations  \eqref{uestimat} and \eqref{wprojection}, the estimates of $\|\bm{\eta_u}\|_{\bm{L}^2}$ and $\|\eta_f\|_{0,\mathcal{T}_h}$ are known, therefore, it is enough to estimate $\|\bm{\theta_u}\|_{\bm{L}^2} + \|\theta_f\|_{0,\mathcal{T}_h}$.\\
Adding \eqref{thetauest} and \eqref{thetafest} shows
\begin{equation}\label{a1}
	\begin{aligned}
		\frac{{\rm d}}{{\rm d}t}\left(\|\bm{\theta_u}\|_{\bm{L}^2} + \|\theta_f\|_{0,\mathcal{T}_h}\right) &\lesssim  h^{k+1} + \|\bm{\theta_u}\|_{\bm{L}^2} + \|\theta_f\|_{0,\mathcal{T}_h} + h^{-1}\|\theta_f\|_{0,\mathcal{T}_h}\|\bm{\theta_u}\|_{\bm{L}^2}
		\\
		&\lesssim  h^{k+1} + \|\bm{\theta_u}\|_{\bm{L}^2} + \|\theta_f\|_{0,\mathcal{T}_h} + h^{-1}\|\theta_f\|^2_{0,\mathcal{T}_h} + h^{-1}\|\bm{\theta_u}\|^2_{\bm{L}^2}.
	\end{aligned}
\end{equation}
Setting 
\begin{equation}\label{a11}
	\vertiii{(\bm{\theta_u},\theta_f)} := \|\bm{\theta_u}\|_{\bm{L}^2} + \|\theta_f\|_{0,\mathcal{T}_h}
\end{equation}
and a function $\Psi$ as
\begin{equation}\label{a111}
	\Psi(t) = h^{k+1} + \int_0^t\left(\vertiii{(\bm{\theta_u},\theta_f)(s)} + h^{-1}\vertiii{(\bm{\theta_u},\theta_f)(s)}^2\right){\rm d}s.
\end{equation}
An integration of equation \eqref{a1} with respect to time from $0$ to $t$ with \eqref{a11}-\eqref{a111} yields
\begin{equation*}\label{a1111}
	\vertiii{(\bm{\theta_u},\theta_f)(t)} \leq C\Psi(t).
\end{equation*}
Without loss of generality assume that $\vertiii{(\bm{\theta_u},\theta_f)(t)} > 0$, otherwise, we add an arbitrary small quantity say $\delta$ and proceed as in a similar way as describe below and then pass the limit as $\delta \rightarrow 0$. Note that $0 < \Psi(0) \leq \Psi(t)$ and $\Psi(t)$ is differentiable. An differentiation of $\Psi(t)$ with respect to $t$, shows
\begin{equation*}
	\begin{aligned}
		\partial_t\Psi(t) &= \vertiii{(\bm{\theta_u},\theta_f)} + h^{-1}\vertiii{(\bm{\theta_u},\theta_f)}^2
		\\
		&\leq C\left(\Psi(t) + h^{-1}(\Psi(t))^2\right).
	\end{aligned}
\end{equation*}
Moreover, $\partial_t\Psi(t) > 0$ and hence, $\Psi(t)$ is strictly monotonically increasing function which is also positive. An integration in time yields 
\begin{equation*}
	\int_0^t\frac{\partial_s\Psi(s)}{\Psi(s)\left(1 + h^{-1}\Psi(s)\right)}\,{\rm d}s \leq \int_0^tC\,{\rm d}s \leq CT.
\end{equation*} 
After evaluating integration on the left hand side exactly by using $\Psi(0) = h^{k+1}$ and taking exponential both side, we obtain
\begin{equation*}
	\Psi(t)\left(1 - h^{k}\left(e^{CT} - 1\right)\right) \leq e^{CT}h^{k+1}.
\end{equation*}
Now, choose small $h > 0$ such that $\left(1 - h^{k}\left(e^{CT} - 1\right)\right) > 0$ and this implies
\begin{equation*}
	\Psi(t) \leq Ch^{k+1}.
\end{equation*}
This completes the proof.
\end{proof}

The idea to define function $\Psi$ and applying non-linear Gr\"onwall's inequality in the proof of above Theorem is inspired by Lemma $4.2$ in \cite{ploymaklam2017priori}.

\begin{lem}\label{utbound}
	Let the hypothesis of Theorem \ref{fi} holds.
Then, there exists a positive constant $C$ independent of $h$, such that for all $t \in (0,T]$
\begin{equation*}\label{thetautest}
	\begin{aligned}
		\|\partial_t\bm{\theta_u}\|^2_{L^2(0,T;\bm{L}^2)} &+ \vertiii{\bm{\theta_u}(t)}^2 + s_h(\theta_p,\theta_p) \leq C h^{2(k+1)}.
	\end{aligned}
\end{equation*}
\end{lem}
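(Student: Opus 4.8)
The plan is to test the Stokes error equation \eqref{errstokes1} with $\bm{\psi}_h=\partial_t\bm{\theta_u}$, rather than with $\bm{\theta_u}$ as in Lemma \ref{ubound}, so as to generate the dissipation $\|\partial_t\bm{\theta_u}\|^2_{\bm{L}^2}$ on the left. Since $a_h$ and $s_h$ are symmetric, this choice converts $a_h(\bm{\theta_u},\partial_t\bm{\theta_u})$ into $\tfrac12\frac{{\rm d}}{{\rm d}t}a_h(\bm{\theta_u},\bm{\theta_u})$. For the pressure coupling $b_h(\partial_t\bm{\theta_u},\theta_p)$ I would differentiate the discrete constraint \eqref{errstokes2} in time: it holds for every fixed $w_h\in L_h$ at each instant, so $-b_h(\partial_t\bm{\theta_u},w_h)+s_h(\partial_t\theta_p,w_h)=0$, and substituting $w_h=\theta_p(t)$ gives $b_h(\partial_t\bm{\theta_u},\theta_p)=s_h(\partial_t\theta_p,\theta_p)=\tfrac12\frac{{\rm d}}{{\rm d}t}s_h(\theta_p,\theta_p)$. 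Collecting these, the identity reads
\begin{equation*}
	\|\partial_t\bm{\theta_u}\|^2_{\bm{L}^2}+\tfrac12\frac{{\rm d}}{{\rm d}t}a_h(\bm{\theta_u},\bm{\theta_u})+\tfrac12\frac{{\rm d}}{{\rm d}t}s_h(\theta_p,\theta_p)=-\left(\rho\bm{\theta_u},\partial_t\bm{\theta_u}\right)+\sum_i R_i,
\end{equation*}
where the $R_i$ are the six forcing terms on the right of \eqref{errstokes1} tested against $\partial_t\bm{\theta_u}$.

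Next I would bound each right-hand term by $\tfrac18\|\partial_t\bm{\theta_u}\|^2_{\bm{L}^2}$ plus a remainder via Cauchy--Schwarz and Young's inequality, absorbing the $\partial_t\bm{\theta_u}$-factors on the left, and then show each remainder is $O(h^{2(k+1)})$. The terms $\left(\rho\bm{\theta_u},\partial_t\bm{\theta_u}\right)$, $\left(\rho\bm{\eta_u},\partial_t\bm{\theta_u}\right)$ and $\left(\partial_t\bm{\eta_u},\partial_t\bm{\theta_u}\right)$ reduce to $\|\bm{\theta_u}\|_{\bm{L}^2}$, $\|\bm{\eta_u}\|_{\bm{L}^2}$ and $\|\partial_t\bm{\eta_u}\|_{\bm{L}^2}$, each $O(h^{k+1})$ by Theorem \ref{fi} and the projection estimate \eqref{uestimat} (using that the Stokes projection commutes with $\partial_t$); the terms $\left(\rho V-\rho_hV_h,\partial_t\bm{\theta_u}\right)$ and $\left((\rho-\rho_h)\bm{u},\partial_t\bm{\theta_u}\right)$ are $O(h^{k+1})$ via Lemma \ref{rhoh2} together with $\|f-f_h\|_{0,\mathcal{T}_h}\le Ch^{k+1}$ from Theorem \ref{fi}. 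The only delicate remainders are the quadratic coupling terms $\left((\rho-\rho_h)\bm{\theta_u},\partial_t\bm{\theta_u}\right)$ and $\left((\rho-\rho_h)\bm{\eta_u},\partial_t\bm{\theta_u}\right)$; for the first I would write $\|(\rho-\rho_h)\bm{\theta_u}\|_{\bm{L}^2}\le\|\rho-\rho_h\|_{0,\mathcal{T}^x_{h}}\|\bm{\theta_u}\|_{\bm{L}^\infty}$ and invoke the inverse inequality \eqref{eq:inverse} as $\|\bm{\theta_u}\|_{\bm{L}^\infty}\le Ch^{-1}\|\bm{\theta_u}\|_{\bm{L}^2}$, obtaining the bound $Ch^{k+1}\cdot h^{-1}\cdot h^{k+1}=Ch^{2k+1}$. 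The key observation is that its square $O(h^{4k+2})$ is $\le O(h^{2(k+1)})$ for $k\ge1$ and $h\le1$, so the $h^{-1}$ loss is harmless.

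Finally I would integrate the identity in time from $0$ to $t$. Using the coercivity \eqref{acoercive}, $a_h(\bm{\theta_u},\bm{\theta_u})\ge\beta\vertiii{\bm{\theta_u}}^2$, and choosing the initial approximation $\bm{u}_h(0)=\bm{\Pi_u u_0}$ so that $\bm{\theta_u}(0)=0$ (whence $s_h(\theta_p(0),\theta_p(0))=0$ by \eqref{errstokes2}), the $t=0$ contributions vanish. The surviving error terms contribute $\int_0^t O(h^{2(k+1)})\,{\rm d}s\le Ch^{2(k+1)}$, while $\int_0^t\|\bm{\theta_u}\|^2_{\bm{L}^2}\,{\rm d}s\le T\sup_{[0,T]}\|\bm{\theta_u}\|^2_{\bm{L}^2}\le Ch^{2(k+1)}$ again by Theorem \ref{fi}; since $\|\bm{\theta_u}\|_{\bm{L}^2}$ is already controlled, no Gr\"onwall argument is needed and the claimed bound follows. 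The main obstacle I anticipate is the pressure/divergence coupling: the manipulation $b_h(\partial_t\bm{\theta_u},\theta_p)=\tfrac12\frac{{\rm d}}{{\rm d}t}s_h(\theta_p,\theta_p)$ presupposes that $\theta_p$, hence $p_h$ and $\Pi_pp$, are differentiable in $t$, which requires $p$ and the discrete DAE solution to be $C^1$ in time; the secondary obstacle, keeping the quadratic $(\rho-\rho_h)$ terms at optimal order, is resolved by the inverse-estimate observation above.
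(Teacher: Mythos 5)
Your proposal is correct and follows essentially the same route as the paper's proof: testing \eqref{errstokes1} with $\bm{\psi}_h=\partial_t\bm{\theta_u}$, differentiating the constraint \eqref{errstokes2} in time with $w_h=\theta_p$ to convert the pressure coupling into $\tfrac12\frac{{\rm d}}{{\rm d}t}s_h(\theta_p,\theta_p)$, absorbing $\|\partial_t\bm{\theta_u}\|^2_{\bm{L}^2}$ by Young's inequality (the paper's ``kick-back''), using Lemma \ref{rhoh2}, the $\|\cdot\|_{L^\infty}\leq Ch^{-1}\|\cdot\|_{L^2}$ bound and Theorem \ref{fi}, and finally integrating in time with the coercivity \eqref{acoercive}. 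Your write-up is in fact slightly more careful than the paper's on two points it leaves implicit: the choice of initial approximation making $\bm{\theta_u}(0)=0$, and the explicit order count $h^{4k+2}\leq h^{2(k+1)}$ for the quadratic $(\rho-\rho_h)$ terms.
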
 

\begin{proof}
A differentiation of equation \eqref{errstokes2} with respect to time with a choice $w_h = \theta_p$ shows
\begin{equation}\label{dt}
	-b_h(\partial_t\bm{\theta_u},\theta_p) + s_h(\partial_t\theta_p,\theta_p) = 0.
\end{equation}
By choosing $\bm{\psi}_h = \partial_t\bm{\theta_u}$ in equation \eqref{errstokes1}, we obtain
\begin{equation*}
	\begin{aligned}
		\|\partial_t\bm{\theta_u}\|^2_{\bm{L}^2} &+ 
		\frac{1}{2}\frac{\partial}{\partial t}a_h(\bm{\theta_u,\theta_u}) + b_h(\partial_t\bm{\theta_u},\theta_p) = \left(\rho V - \rho_hV_h, \partial_t\bm{\theta_u}\right) 
		\\
		&+ \left(\left(\rho - \rho_h\right)\bm{\theta_u},\partial_t\bm{\theta_u}\right) - \left(\left(\rho - \rho_h\right)\bm{\eta_u},\partial_t\bm{\theta_u}\right) - \left(\left(\rho - \rho_h\right)\bm{u},\partial_t\bm{\theta_u}\right)
		\\
		& - \left(\rho\bm{\theta_u},\partial_t\bm{\theta_u}\right) + \left(\rho\bm{\eta_u},\partial_t\bm{\theta_u}\right) + \left(\partial_t\bm{\eta_u},\partial_t\bm{\theta_u}\right).
	\end{aligned}
\end{equation*}
A use of equation \eqref{dt} with the H\"older inequality, the Young's inequality, projection estimates, Lemma \ref{rhoh2} and estimate $\|\cdot\|_{L^\infty} \leq h^{-1}\|\cdot\|_{L^2}$ shows
\begin{equation*}
	\begin{aligned}
		\|\partial_t\bm{\theta_u}\|^2_{\bm{L}^2} + \frac{1}{2}\frac{\partial}{\partial t}&\left(a_h(\bm{\theta_u},\bm{\theta_u}) + s_h(\theta_p,\theta_p)\right) \leq C\left(h^{2k+2} + \|f - f_h\|^2_{0,\mathcal{T}_h} \right.
		\\
		&\left. \qquad\quad\qquad+\,\, h^{-2}\|f - f_h\|^2_{0,\mathcal{T}_h}\|\bm{\theta_u}\|^2_{\bm{L}^2} + \|\bm{\theta_u}\|^2_{\bm{L}^2} \right) + \frac{1}{2}\|\partial_t\bm{\theta_u}\|^2_{\bm{L}^2}.
	\end{aligned}
\end{equation*}
A kick-back argument with integration in time with respect to time using coercivity property \eqref{acoercive} and estimates from Theorem \ref{fi} completes the proof.
\end{proof}

\begin{rem}
Lemma \ref{utbound} yields a super-convergence result
\begin{equation*}
	\vertiii{\left(\bm{\Pi_uu} - \bm{u}_h\right)(t)} \leq Ch^{k+1} \quad \forall \quad t \in (0,T].
\end{equation*} 
Since for $1 \leq p < \infty$
\begin{equation*}
	\|\left(\bm{\Pi_uu} - \bm{u}_h\right)(t)\|_{\bm{L}^p} \leq C\vertiii{\left(\bm{\Pi_uu} - \bm{u}_h\right)(t)} \quad \forall \quad t \in (0,T]
\end{equation*}
then 
\begin{equation*}
	\|\left(\bm{\Pi_uu} - \bm{u}_h\right)(t)\|_{\bm{L}^p} \leq Ch^{k+1} \quad \forall \quad t \in (0,T].
\end{equation*}
Again as $\Omega_x \subset \R^2$, the discrete Sobolev imbedding (refer \cite{thomee2007galerkin}) implies
\begin{equation*}
	\begin{aligned}
		\|\left(\bm{\Pi_uu} - \bm{u}_h\right)(t)\|_{\bm{L}^\infty} &\leq C \left(log\left(\frac{1}{h}\right)\right)\vertiii{\left(\bm{\Pi_uu} - \bm{u}_h\right)(t)}
		\\
		&\leq C \left(log\left(\frac{1}{h}\right)\right)h^{k+1} \quad \forall \quad t \in (0,T].
	\end{aligned}
\end{equation*}
\end{rem}

\begin{rem}\label{ulpbound}
	If 
	\begin{equation*}
		\|\bm{u - \Pi_uu}\|_{L^\infty(0,T;\bm{L}^p)} \leq \left\{
		\begin{aligned}
			& Ch^{k+1}   \qquad\qquad\qquad\mbox{ for} \quad 1 \leq p < \infty,
			\\
			& C \left(log\left(\frac{1}{h}\right)\right)h^{k+1} \quad\mbox{for}\quad p = \infty,
		\end{aligned}
		\right.
	\end{equation*}
	then
	\begin{equation*}
		\|\bm{u} - \bm{u}_h\|_{L^\infty(0,T;\bm{L}^p)} \leq \left\{
		\begin{aligned}
			& Ch^{k+1}   \qquad\qquad\qquad\mbox{ for} \quad 1 \leq p < \infty,
			\\
			& C \left(log\left(\frac{1}{h}\right)\right)h^{k+1} \quad\mbox{for}\quad p = \infty. 
		\end{aligned}
		\right.
	\end{equation*}
\end{rem}


\begin{lem}\label{pbound}
	Under the hypothesis of Theorem \ref{fi},
 there exist a positive constant $C$ independent of $h$, such that for all $t \in (0,T]$
\begin{equation*}\label{thetapest}
	\begin{aligned}
		\int_{0}^{T}\vertiii{\left(\bm{\theta_u}(s),\theta_p(s)\right)}^2\,{\rm d}s \leq C h^{2(k+1)}.
	\end{aligned}
\end{equation*}	
\end{lem}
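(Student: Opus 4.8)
The goal is to bound $\int_0^T \vertiii{(\bm{\theta_u}(s),\theta_p(s))}^2\,{\rm d}s$ by $Ch^{2(k+1)}$. The natural route is to exploit the discrete inf-sup stability \eqref{dic1} of the bilinear form $\tilde{\mathcal{A}}$, which controls the full triple-norm $\vertiii{(\bm{\theta_u},\theta_p)}$ — crucially including the pressure component $\theta_p$ — rather than only the coercivity \eqref{acoercive}, which sees $\bm{\theta_u}$ alone. So the plan is first to recover an expression for $\tilde{\mathcal{A}}((\bm{\theta_u},\theta_p),(\bm{\psi}_h,w_h))$ by combining the error equations \eqref{errstokes1} and \eqref{errstokes2}: moving the time-derivative term $(\partial_t\bm{\theta_u},\bm{\psi}_h)$ and the reaction/coupling terms to the right-hand side, we get
\begin{equation*}
	\tilde{\mathcal{A}}((\bm{\theta_u},\theta_p),(\bm{\psi}_h,w_h)) = -\left(\partial_t\bm{\theta_u},\bm{\psi}_h\right) + \mathcal{R}(\bm{\psi}_h),
\end{equation*}
where $\mathcal{R}(\bm{\psi}_h)$ collects the terms $(\rho V-\rho_hV_h,\bm{\psi}_h)$, $(\partial_t\bm{\eta_u},\bm{\psi}_h)$, $(\rho\bm{\eta_u},\bm{\psi}_h)$, the three terms involving $(\rho-\rho_h)$, and $-(\rho\bm{\theta_u},\bm{\psi}_h)$.

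Next I would divide by $\vertiii{(\bm{\psi}_h,w_h)}$ and take the supremum over $(\bm{\psi}_h,w_h)\in\bm{H}_h\times L_h$, applying \eqref{dic1} to obtain
\begin{equation*}
	\alpha\vertiii{(\bm{\theta_u},\theta_p)} \leq \|\partial_t\bm{\theta_u}\|_{\bm{L}^2} + C\left(\|\rho V - \rho_hV_h\|_{0,\mathcal{T}_h^x} + \|\partial_t\bm{\eta_u}\|_{\bm{L}^2} + \|\bm{\eta_u}\|_{\bm{L}^2} + \|\rho-\rho_h\|_{0,\mathcal{T}_h^x}\|\bm{u}\|_{\bm{L}^\infty} + \|\bm{\theta_u}\|_{\bm{L}^2}\right),
\end{equation*}
where each functional acting on $\bm{\psi}_h$ is estimated by Cauchy--Schwarz together with the Poincar\'e--Friedrichs inequality $\|\bm{\psi}_h\|_{\bm{L}^2}\le C\vertiii{\bm{\psi}_h}\le C\vertiii{(\bm{\psi}_h,w_h)}$, so that the $\vertiii{(\bm{\psi}_h,w_h)}$ cancels. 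The terms with $(\rho-\rho_h)$ multiplied by $\bm{\theta_u}$ or $\bm{\eta_u}$ need the inverse-type bound $\|\cdot\|_{\bm{L}^\infty}\le h^{-1}\|\cdot\|_{\bm{L}^2}$ exactly as in Lemma \ref{ubound}, producing a factor $h^{-1}\|f-f_h\|_{0,\mathcal{T}_h}\|\bm{\theta_u}\|_{\bm{L}^2}$.

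Then I would square both sides, integrate in time over $(0,T)$, and invoke the already-established estimates: the projection bounds \eqref{uestimat} control $\|\bm{\eta_u}\|_{\bm{L}^2}$ and $\|\partial_t\bm{\eta_u}\|_{\bm{L}^2}$ by $Ch^{k+1}$; Lemma \ref{rhoh2} together with Theorem \ref{fi} gives $\|\rho-\rho_h\|_{0,\mathcal{T}_h^x}, \|\rho V-\rho_hV_h\|_{0,\mathcal{T}_h^x}\le C\|f-f_h\|_{0,\mathcal{T}_h}\le Ch^{k+1}$; Theorem \ref{fi} gives $\|\bm{\theta_u}\|_{\bm{L}^2}\le Ch^{k+1}$; and Lemma \ref{utbound} supplies $\int_0^T\|\partial_t\bm{\theta_u}\|^2_{\bm{L}^2}\,{\rm d}s\le Ch^{2(k+1)}$, which is precisely what is needed to absorb the leading $\|\partial_t\bm{\theta_u}\|_{\bm{L}^2}^2$ contribution. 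The residual nonlinear term $h^{-2}\|f-f_h\|^2_{0,\mathcal{T}_h}\|\bm{\theta_u}\|^2_{\bm{L}^2}$ becomes $h^{-2}\cdot h^{2(k+1)}\cdot h^{2(k+1)} = h^{4k+2}$, which is of higher order than $h^{2(k+1)}$ for $k\ge1$ and hence harmless. Collecting everything yields the claimed $\int_0^T\vertiii{(\bm{\theta_u},\theta_p)}^2\,{\rm d}s\le Ch^{2(k+1)}$.

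The main obstacle I anticipate is the correct handling of the pressure: coercivity alone is blind to $\theta_p$, so it is essential to route the argument through the inf-sup condition \eqref{dic1} and to verify that every right-hand-side functional is genuinely bounded in the dual $\vertiii{(\cdot,\cdot)}$-norm (using Poincar\'e--Friedrichs to convert $\bm{L}^2$ test-function norms into triple-norms). The second delicate point is ensuring that the $\|\partial_t\bm{\theta_u}\|_{\bm{L}^2}$ term, which appears undifferentiated on the right, is controlled in $L^2(0,T)$ \emph{before} integration — this is exactly the reason Lemma \ref{utbound} must be proved first and why the present lemma is stated after it.
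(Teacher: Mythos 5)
Your proposal is correct and follows essentially the same route as the paper's proof: rewriting the error equations \eqref{errstokes1}--\eqref{errstokes2} so that $\tilde{\mathcal{A}}((\bm{\theta_u},\theta_p),(\bm{\psi}_h,w_h))$ is isolated with $(\partial_t\bm{\theta_u},\bm{\psi}_h)$ moved to the right-hand side, applying the discrete inf-sup condition \eqref{dic1}, and then closing the estimate via the projection bounds, Lemma \ref{rhoh2}, Lemma \ref{utbound} (for the $L^2(0,T)$ control of $\partial_t\bm{\theta_u}$), and Theorem \ref{fi}. Your write-up is in fact more explicit than the paper's (which compresses the final estimation into one sentence), and your observations about routing the pressure through the inf-sup condition and about the necessary ordering of Lemma \ref{utbound} before this lemma match the paper's structure exactly.
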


\begin{proof}
	Note that equation \eqref{errstokes1}-\eqref{errstokes2} is also equivalent to
	\begin{equation*}\label{errrstokes}
		\begin{aligned}
				\left(\frac{\partial \bm{\theta_u}}{\partial t},\bm{\psi}_h\right) &+ \tilde{\mathcal{A}}\left((\bm{\theta_u},\theta_p),(\bm{\psi}_h,w_h)\right) + \left(\rho\bm{\theta_u}, \bm{\psi}_h\right) = \left(\rho V - \rho_hV_h,\bm{\psi}_h\right) + \left(\partial_t\bm{\eta_u},\bm{\psi}_h\right)
				\\
				& +  \left(\rho\bm{\eta_u},\bm{\psi}_h\right) + \left(\left(\rho - \rho_h\right)\bm{\theta_u},\bm{\psi}_h\right) 
				- \left(\left(\rho - \rho_h\right)\bm{\eta_u},\bm{\psi}_h\right) + \left((\rho - \rho_h)\bm{u},\bm{\psi}_h\right), 
			\end{aligned}
	\end{equation*}
	where, $\tilde{\mathcal{A}}\left((\cdot,\cdot),(\cdot,\cdot)\right)$ is defined in \eqref{A_h}. By taking $\left(\partial_t\bm{\theta_u},\bm{\psi}_h\right)$ on the right hand side in above equation yields
\begin{equation*}
	\begin{aligned}
		\tilde{\mathcal{A}}&\left(\left(\bm{\theta_u},\theta_p\right),\left(\bm{\psi}_h,w_h\right)\right) = \left(\rho V - \rho_hV_h, \bm{\psi}_h\right) - \left(\left(\rho - \rho_h\right)\bm{\eta_u},\bm{\psi}_h\right) + \left(\left(\rho - \rho_h\right)\bm{\theta_u},\bm{\psi}_h\right)
		\\
		& \qquad\qquad- \left(\left(\rho - \rho_h\right)u, \bm{\psi}_h\right) + \left(\rho \bm{\eta_u},\bm{\psi}_h\right) - \left(\rho \bm{\theta_u},\bm{\psi}_h\right) + \left(\partial_t\bm{\eta_u},\bm{\psi}_h\right) - \left(\partial_t\bm{\theta_u},\bm{\psi}_h\right).
	\end{aligned}
\end{equation*}
A use of discrete inf-sup stability condition \eqref{dic1} with the H\"older inequality, projection estimate, Lemma \ref{rhoh2}, Lemma \ref{utbound} and Theorem \ref{fi} completes the proof.
\end{proof}

\begin{thm}\label{fi1}
Let $k \geq 1$ and let
\[
(\bm{u},p) \in \mathcal{C}^0([0,T];\bm{H}^{k+1}\cap \bm{W}^{1,\infty}) \times \mathcal{C}^0([0,T];H^k(\Omega_x)) 
\]
\[
f \in \mathcal{C}^1([0,T];H^{k+2}(\Omega)\cap W^{1,\infty}(\Omega)) 
\]
 be the solution of the Vlasov-Stokes equation \eqref{eq:continuous-model}-\eqref{contstokes}. Let 
 \[
 (\bm{u}_h,p_h,f_h) \in \mathcal{C}^0([0,T];\bm{H}_h\times L_h) \times \mathcal{C}^1([0,T];\mathcal{Z}_h)
 \]
  be the dG-dG approximation solution of \eqref{bh}-\eqref{ch}, then 
\begin{equation*}
	\|p(t) - p_h(t)\|_{L^2(0,T;L^2(\Omega_x))}  \leq Ch^{k} \quad \forall\,\,t \in [0,T]
\end{equation*}
where $C$ depends on the final time $T$, the polynomial degree $k$, the shape regularity of the partition and norm of $(\bm{u},p,f)$.
\end{thm}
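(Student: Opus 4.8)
The plan is to estimate the pressure error through the standard splitting $e_p = p - p_h = \theta_p - \eta_p$ introduced in \eqref{ues}, where $\theta_p = \Pi_p p - p_h$ and $\eta_p = \Pi_p p - p$, and then apply the triangle inequality $\|p - p_h\|_{L^2(0,T;L^2(\Omega_x))} \leq \|\theta_p\|_{L^2(0,T;L^2(\Omega_x))} + \|\eta_p\|_{L^2(0,T;L^2(\Omega_x))}$. Since the heavy lifting has already been carried out in Lemma \ref{pbound}, the remaining task is essentially to read off the two contributions and identify which one dictates the final rate.

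For the projection part $\eta_p$, I would invoke the Stokes projection estimate \eqref{uestimat}. Dividing the bound $h_x\|p - \Pi_p p\|_{0,\mathcal{T}_h^x} \leq Ch_x^{k+1}$ by $h_x$ gives $\|\eta_p\|_{0,\mathcal{T}_h^x} \leq Ch_x^{k}$ pointwise in time; because the regularity hypothesis places $p$ in $\mathcal{C}^0([0,T];H^k(\Omega_x))$, this bound is uniform on $[0,T]$, so integrating over $(0,T)$ yields $\|\eta_p\|_{L^2(0,T;L^2(\Omega_x))} \leq C\sqrt{T}\,h^{k} = Ch^{k}$.

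For the discrete part $\theta_p$, I would use the definition of the triple norm together with Lemma \ref{pbound}. By construction $\vertiii{(\bm{w}_h,q_h)}^2 = \vertiii{\bm{w}_h}^2 + \|q_h\|_{L^2(\mathcal{T}_h^x)}^2 + \sum_{F \in \Gamma_x} h_x\|[q_h]\|_{L^2(F)}^2$, so in particular $\|\theta_p\|_{L^2(\mathcal{T}_h^x)} \leq \vertiii{(\bm{\theta_u},\theta_p)}$. Integrating in time and applying Lemma \ref{pbound} gives $\int_0^T \|\theta_p(s)\|_{L^2(\Omega_x)}^2\,{\rm d}s \leq \int_0^T \vertiii{(\bm{\theta_u}(s),\theta_p(s))}^2\,{\rm d}s \leq Ch^{2(k+1)}$, hence $\|\theta_p\|_{L^2(0,T;L^2(\Omega_x))} \leq Ch^{k+1}$. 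Combining the two pieces yields $\|p - p_h\|_{L^2(0,T;L^2(\Omega_x))} \leq Ch^{k+1} + Ch^{k} \leq Ch^{k}$.

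With Lemma \ref{pbound} already in hand, no genuine obstacle remains: the argument reduces to a triangle inequality and a comparison of norms. The one point worth emphasising is that the super-convergent rate $h^{k+1}$ available for the discrete error $\theta_p$ is lost when one returns to the true pressure error, since the projection error $\eta_p = p - \Pi_p p$ converges only at the suboptimal rate $h^{k}$ --- this is inherent to the factor $h_x$ multiplying the pressure term in the Stokes projection estimate \eqref{uestimat}, and it is this term, not $\theta_p$, that determines the final order.
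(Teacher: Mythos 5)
Your proposal is correct and follows essentially the same route as the paper: the paper's proof is exactly the splitting $p - p_h = \theta_p - \eta_p$ from \eqref{ues}, the triangle inequality, the Stokes projection estimate \eqref{uestimat} for $\eta_p$ (giving the dominant rate $h^k$), and Lemma \ref{pbound} for $\theta_p$. Your write-up merely fills in the details the paper leaves implicit, including the accurate observation that the suboptimal factor $h_x$ on the pressure term in \eqref{uestimat}, not the super-convergent $\theta_p$, dictates the final order $h^k$.
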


\begin{proof}
A use of equation \eqref{ues} with triangle inequality gives
\begin{equation*}
	\begin{aligned}
		\|p - p_h\|_{0,\mathcal{T}^x_h} \leq \|\eta_p\|_{0,\mathcal{T}^x_h} + \|\theta_p\|_{0,\mathcal{T}^x_h}.
	\end{aligned}
\end{equation*}
Rest of the proof follows from Lemma \ref{uestimat} and Lemma \ref{pbound}.
\end{proof}

\begin{rem}\label{boundscase}
	From equation \eqref{eq:fhbd} and Theorem \ref{fi} along with equation \eqref{uL2}, we have
	\begin{equation*}
		f_h \in L^\infty(0,T;L^2(\mathcal{T}_h)) \quad \mbox{and} \quad \bm{u}_h \in L^\infty(0,T;\bm{L}^2).
	\end{equation*} 
Thanks to these bounds, the earlier local-in-time existence result for the discrete problem can be improved to a global-in-time existence result by extending the interval of existence.
\end{rem}

\subsection{Comment on $3$D}\label{3Dsubsec}

In $3$D the norm comparison inequality becomes
\begin{equation}\label{eq:3dnormcom}
	\|w_h\|_{L^p(T^x)} \leq Ch_x^{\frac{3}{p} - \frac{3}{q}}\|w_h\|_{L^q(T^x)}.
\end{equation} 
A use of the above inequality, yields
\begin{equation}
	\|\bm{u} - \bm{u}_h\|_{\bm{L}^\infty} \lesssim h_x\|\bm{u}\|_{\bm{W}^{1,\infty}} + h_x^{k-\frac{1}{2}}\|\bm{u}\|_{\bm{H}^{k+1}} + h_x^{-\frac{3}{2}}\|\bm{u} - \bm{u}_h\|_{\bm{L}^2}.
\end{equation}
On using \eqref{eq:3dnormcom} the result of Lemma \ref{ubound}, will modified as
\begin{equation}\label{thetauest3d}
	\begin{aligned}
		\frac12\frac{{\rm d}}{{\rm d}t}\|\bm{\theta_u}\|^2_{\bm{L}^2} & + \beta\vertiii{\bm{\theta_u}(t)}^2 + s_h(\theta_p,\theta_p) 
		\\
		&\leq C\left(h^{k+1} + \|f - f_h\|_{0,\mathcal{T}_h} + h^{-\frac{3}{2}}\|f - f_h\|_{0,\mathcal{T}_h}\|\bm{\theta_u}\|_{\bm{L}^2}\right)\|\bm{\theta_u}\|_{\bm{L}^2}.
	\end{aligned}
\end{equation}
The change in the estimate of $\mathcal{K}^2$ term \eqref{K2Estimate} is as follows:
\begin{equation}\label{K2Estimate3d}
	\begin{aligned}
		|\mathcal{K}^2| &\leq Ch^k\|\bm{u} - \bm{u}_h\|_{\bm{L}^\infty}\|f\|_{k+1,\Omega}\|\theta_f\|_{0,\mathcal{T}_h} 
		\\
		&\quad + Ch^{k+1}\left(\|\bm{u}\|_{\bm{W^}{1,\infty}}\|f\|_{k+1,\Omega} + \left(L + \|\bm{u}\|_{\bm{L}^\infty}\right)\|f\|_{k+2,\Omega}\right)\|\theta_f\|_{0,\mathcal{T}_h}
		\\
		&\leq Ch^k\left(h\|\bm{u}\|_{\bm{W}^{1,\infty}} + h^{k-\frac{1}{2}}\|\bm{u}\|_{\bm{H}^{k+1}} + h^{-\frac{3}{2}}\|\bm{u} - \bm{u}_h\|_{\bm{L}^2}\right)\|f\|_{k+1,\Omega}\|\theta_f\|_{0,\mathcal{T}_h} 
		\\
		&\quad + Ch^{k+1}\left(\|\bm{u}\|_{\bm{W}^{1,\infty}}\|f\|_{k+1,\Omega} + \left(L + \|\bm{u}\|_{\bm{L}^\infty}\right)\|f\|_{k+2,\Omega}\right)\|\theta_f\|_{0,\mathcal{T}_h}
		\\
		&\leq \left(C_{\bm{u},f} h^{k+1} + C h^{2k - \frac{1}{2}} + C_fh^{k-\frac{3}{2}}\|\bm{\theta_u}\|_{\bm{L}^2}\right)\|\theta_f\|_{0,\mathcal{T}_h}.
	\end{aligned}
\end{equation}
This will modify \eqref{thetafest} as:
\begin{equation}\label{thetafest3d}
	\begin{aligned}
		\frac{{\rm d}}{{\rm d}t}\|\theta_f\|_{0,\mathcal{T}_h} \leq C_{\bm{u},f}h^{k+1} + C  h^{2k-\frac{1}{2}} + Ch^{k -\frac{3}{2}}\|\bm{\theta_u}\|_{\bm{L}^2} + \|\theta_f\|_{0,\mathcal{T}_h}.
	\end{aligned}
\end{equation}
By adding \eqref{thetauest3d} and \eqref{thetafest3d}, we obtain
\begin{equation}
	\begin{aligned}
		\frac{{\rm d}}{{\rm d}t}\left(\|\bm{\theta_u}\|_{\bm{L}^2} + \|\theta_f\|_{0,\mathcal{T}_h}\right) &\lesssim h^{k+1} + h^{2k-\frac{1}{2}} + h^{k-\frac{3}{2}}\|\bm{\theta_u}\|_{\bm{L}^2} 
		\\
		&\quad+ \|\theta_f\|_{0,\mathcal{T}_h} + h^{-\frac{3}{2}}\|\bm{\theta_u}\|_{\bm{L}^2}\|\theta_f\|_{0,\mathcal{T}_h}.
	\end{aligned}
\end{equation}
Thus, for $k \geq 2$, it becomes
\begin{equation}
	\begin{aligned}
		\frac{{\rm d}}{{\rm d}t}\left(\|\bm{\theta_u}\|_{\bm{L}^2} + \|\theta_f\|_{0,\mathcal{T}_h}\right) &\lesssim h^{k+1} + \left(\|\bm{\theta_u}\|_{\bm{L}^2} + \|\theta_f\|_{0,\mathcal{T}_h}\right) + h^{-\frac{3}{2}}\left(\|\bm{\theta_u}\|^2_{\bm{L}^2} + \|\theta_f\|^2_{0,\mathcal{T}_h}\right)
		\\
		&\lesssim h^{k+1}+ \vertiii{(\bm{\theta_u},\theta_f)} + h^{-\frac{3}{2}}\vertiii{(\bm{\theta_u},\theta_f)}^2,
	\end{aligned}
\end{equation}
with same definition of $\vertiii{(\cdot,\cdot)}$ as in Theorem \ref{fl2}. Set 
\[
\Psi(t) = h^{k+1} + \int_0^t\left(\vertiii{(\bm{\theta_u},\theta_f)(s)} + h^{-\frac{3}{2}}\vertiii{(\bm{\theta_u},\theta_f)(s)}^2\right){\rm d}s.
\] 
On integration and proceed in a similar manner as in the proof of Theorem \ref{fl2} to obtain 
\[
\Psi(t)\left(1 - h^{k - \frac{1}{2}}(e^{CT} - 1)\right) \leq e^{CT}h^{k+1}. 
\]
For $k \geq 2$ and with smallness assumption on $h$
\[
(1 - h^{k-\frac{1}{2}}(e^{cT} - 1)) > 0.
\]
Hence, the result of Theorem \ref{fi} follows for $3$D for $k \geq 2$. 

\section{Numerical Experiments}
This section, we report on some numerical simulations based on a splitting algorithm. We have approximated solutions to the following Vlasov-unsteady Stokes model with source terms:
\begin{equation}\label{eq:continuous-models-ss12}
	\left\{
	\begin{aligned}
		\partial_t f + v\cdot \nabla_x f + \nabla_v \cdot \Big( \left( \bm{u} - v \right) f \Big) & = \mathcal{F}(t,x,v)  \quad\mbox{ in }(0,T)\times\Omega_x\times\Omega_v,
		\\
		f(0,x,v) & = f_{0}(x,v)\qquad  \mbox{ in }\Omega_x\times\Omega_v.
	\end{aligned}
	\right.
\end{equation}
\begin{equation*}\label{contstokess-ss12}
	\left\{
	\begin{aligned}
		\partial_t\bm{u} - \Delta_x \bm{u} + \rho \bm{u} +\nabla_x p & = \rho V + \mathcal{G}(t,x)  \quad \mbox{ in }\Omega_x,
		\\
		\nabla_x \cdot \bm{u} & = 0  \qquad  \mbox{ in }\Omega_x.
	\end{aligned}
	\right.
\end{equation*}
For the computational results, we employed splitting algorithm for the linear kinetic equation \eqref{eq:continuous-models-ss12}, using the Lie-Trotter splitting method. 
To achieve this, firstly, split the equation \eqref{eq:continuous-models-ss12} as:
\begin{equation}\label{sEq:ucontstokes-ss12}
	(a)\quad  \begin{aligned}
		\partial_t f + \nabla_v\cdot\left(\left( \bm{u} - v \right)f \right) = \mathcal{F}(t,x,v).
	\end{aligned}
	\qquad (b) \quad	\begin{aligned}
		\partial_t f + v\cdot\nabla_x f = 0,
	\end{aligned}   
\end{equation}
To compute the solution $f$, we first solve part $(a)$ of \eqref{sEq:ucontstokes-ss12} over the full time step using $f_0$ as the initial data to obtain an intermediate solution $\tilde{f}$. Next, part $(b)$ of \eqref{sEq:ucontstokes-ss12} is solved over the full time step with $\tilde{f}$ as the initial data.

For a temporal discretization, 
let $\{t_n\}_{n=0}^{N}$ be a uniform partition of the time interval $[0,T]$, where $t_n = n\Delta t$ with time step $\Delta t > 0$.
Let $F^n \in \mathcal{Z}_h, \bm{U}^n \in \bm{H}_h, P^n \in L_h, \rho_h^n$ and $\rho_h^nV_h^n$ be the approximations of $f^n = f(t_n), \bm{u}^n = \bm{u}(t_n), p^n = p(t_n), \rho^n = \rho(t_n)$ and $\rho^nV^n = \rho(t_n)V(t_n)$, respectively. 
Our numerical algorithm is to find $\left( \bm{U}^{n+1},P^{n+1},F^{n+1}\right) \in \bm{H}_h \times L_h \times \mathcal{Z}_h$, for $n = 0,1,\cdots, N-1$ such that 
\begin{equation}\label{fdsvusch1-ss12}
	\left\{
	\begin{aligned}
		\left(\frac{\bm{U}^{n+1} - \bm{U}^n}{\Delta t}, \bm{\psi}_h\right) + a_h(\bm{U}^{n+1},\bm{\psi}_h) &+ b_h(\bm{\psi}_h,P^{n+1}) + \left(\rho_h^n\bm{U}^{n+1}, \bm{\psi}_h\right)
		\\
		&= \left(\rho_h^nV_h^n + \mathcal{G}^{n+1} , \bm{\psi}_h\right) ~ \forall ~ \bm{\psi}_h \in \bm{H}_h,
		\\
		- b_h(\bm{U}^{n+1},w_h) + s_h(P^{n+1},w_h) &= 0 \quad \forall \quad w_h \in L_h,
		\\
		\left(\frac{\tilde{F} - F^n}{\Delta t},\phi_h\right) + \mathcal{B}^x_h\left(\bm{U}^{n+1};F^n,\phi_h\right) 
		&= \left(\mathcal{F}^n,\phi_h\right) \quad \forall \quad \phi_h \in \mathcal{Z}_h,
		\\
		\left(\frac{F^{n+1} - \tilde{F}}{\Delta t}, \psi_h\right) +\mathcal{B}^v_h(\tilde{F},\psi_h) &= 0 \quad \forall \quad \psi_h \in \mathcal{Z}_h,
	\end{aligned}
	\right.
\end{equation}
where, $ a_h(\bm{U}^n,\bm{\psi}_h), b_h(\bm{\psi}_h,P^n)$ and $s_h(P^n,w_h)$ are defined by \eqref{aih}-\eqref{sh} at $t = t_n$. In \eqref{fdsvusch1-ss12} we have  used the following notations:
\begin{equation*}
	\mathcal{B}^v_h(F^n,\phi_h) := \sum_{R \in \mathcal{T}_h}\mathcal{B}^v_{h,R}(F^n,\phi_h)
\end{equation*}
with
\begin{equation*}
	\begin{aligned}
		\mathcal{B}^v_{h,R}(F^n,\phi_{h}) & :=  - \int_{T^v}\int_{T^x}F^n\,v.\nabla_x\phi_h \, {\rm d}x\,{\rm  d}v + \int_{T^v}\int_{\partial T^x}\reallywidehat{v\cdot\bm{n}F^n}\phi_{h}\, {\rm d}s(x)\,{\rm d}v
	\end{aligned}
\end{equation*}
and 
\begin{equation*}
	\mathcal{B}^x_h(\bm{U}^{n+1};F^n,\phi_h) := \sum_{R \in \mathcal{T}_h}\mathcal{B}^x_{h,R}(\bm{U}^{n+1};F^n,\phi_h),
\end{equation*}
with
\begin{equation*}
	\begin{aligned}
		\mathcal{B}^x_{h,R}(\bm{U}^{n+1};F^n,\phi_h) & := - \int_{T^x}\int_{T^v}F^n\left(\bm{U}^{n+1} - v\right).\nabla_v\phi_h \, {\rm d}v\,{\rm d}x 
		\\&\qquad+ \int_{T^x}\int_{\partial T^v} \reallywidehat{\left(\bm{U}^{n+1} - v\right)\cdot\bm{n}F^n}\phi_h \, {\rm d}s(v)\,{\rm d}x
	\end{aligned}
\end{equation*} 
wherein the numerical fluxes are defined by \eqref{flux} at $t = t_n$.

For our computation, let $\bm{x}_l$ and $\bm{v}_m$ are the nodes in $\mathcal{T}_h^x$ and $\mathcal{T}_h^v$, respectively, where $l = 1, \cdots, N_x$ and  $m = 1, \cdots, N_v$. Any function in the $\mathcal{Z}_h$ space can be represented as 
\[
g = \sum_{l,m}g(\bm{x}_l,\bm{v}_m)L_x^l(\bm{x})L_v^m(\bm{v})
\]
on $R$, where $L_x^l(\bm{x})$ and $L_v^m(\bm{v})$ are the $l$-th and $m$-th Lagrangian interpolating polynomials in $T^x$ and $T^v$, respectively.  

In this setting, we can solve the equations for $f$ in \eqref{sEq:ucontstokes-ss12} in the reduced dimensions. For example, in equation $\eqref{sEq:ucontstokes-ss12} (a)$, we fix a nodal point in the $x$-direction, say $\bm{x}_l$, and solve
\begin{equation*}
	\begin{aligned}
		\partial_t f(\bm{x}_l) + \nabla_v\cdot\left(\left(\bm{u}(\bm{x}_l) - v\right)f(\bm{x}_l)\right) = \mathcal{F}(t,\bm{x}_l,v)
	\end{aligned}
\end{equation*}
in the $v$-direction and obtain an update of point values of $f(\bm{x}_l,\bm{v}_m)$ for all $\bm{v}_m \in \mathcal{T}^v_{h}$.

Similarly, for equation $\eqref{sEq:ucontstokes-ss12} (b)$. We fix a nodal point in the $v$-direction, say $\bm{v}_m$, and solve 
\[
\partial_tf(\bm{v}_m) + \bm{v}_m\cdot\nabla_x f(\bm{v}_m) = 0
\]
by a dG method in the $x$-direction and obtain an update of point values of $f(\bm{x}_l,\bm{v}_m)$ for all $\bm{x}_l \in \mathcal{T}^x_{h}$.

For the plots, we denotes the degree of polynomials in $x$ and $v$-variables by $k_x$ and $k_v$, respectively. The mesh sizes for $\mathcal{T}_h^x$ and $\mathcal{T}_h^v$ are represented by $h_x$ and $h_v$, respectively. For the numerical experiments, we take $h_x = h_v = h$. We calculate the errors $f - f_h, \bm{u - u_h}$ and $p - p_h$ in $L^2(\Omega), \bm{L^2}$ and $L^2(\Omega_x)$-norms, respectively, at final time $T$ and denoted by errL2f, errL2u and errL2p, respectively.

\begin{exm}\label{exm-1-vus}
	The first example for which we test the proposed scheme has the following as its exact solution:
	
	\begin{equation*}
		\begin{aligned}
			f(t,x,y,v_1,v_2) &= \cos(t)\sin(2\pi x)\sin(2\pi y)e^{(-v_1^2-v_2^2)}(1-v_1^2)(1-v_2^2)(1+v_1)(1+v_2),
			\\
			u_1(t,x,y) &= \cos(t)\left(-\cos(2\pi x)\sin(2\pi y) + \sin(2\pi y)\right),
			\\
			u_2(t,x,y) &= \cos(t)\left(\sin(2\pi x)\cos(2\pi y) - \sin(2\pi x)\right),
			\\
			p(t,x,y) &= 2\pi\cos(t)\left(\cos(2\pi y) - \cos(2\pi x)\right).
		\end{aligned}
	\end{equation*} 
	Note that corresponding is the initial data
	\[
	f(0,x,y,v_1,v_2) = \sin(2\pi x)\sin(2\pi y)e^{(-v_1^2-v_2^2)}(1-v_1^2)(1-v_2^2)(1+v_1)(1+v_2),
	\] 
	\[
	u_1(0,x,y) = -\cos(2\pi x)\sin(2\pi y) + \sin(2\pi y), \qquad
	u_2(0,x,y) = \sin(2\pi x)\cos(2\pi y) - \sin(2\pi x).
	\]
\end{exm}	 

We run the simulations for the domains $\Omega_x = [0,1]^2, \Omega_v = [-1,1]^2$, the penalty parameter as $10$ and for $k_x = 1,1$ and $k_v = 1,2$ with the final time $0.1$ and for $k_x = k_v = 2$ the final time is $0.01$.

\begin{figure}
	\centering
	\includegraphics[width=6.25cm]{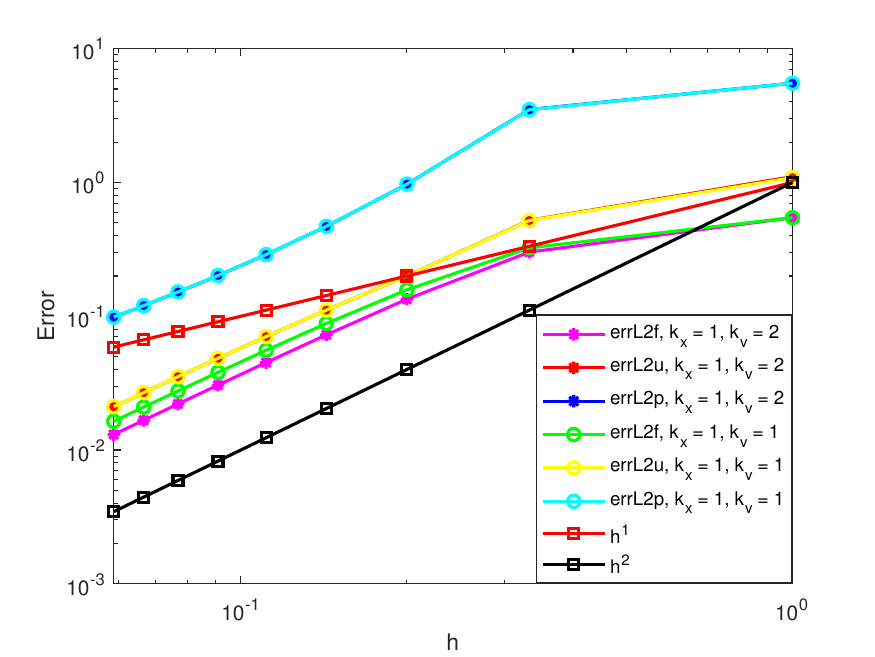}
	\includegraphics[width=6.25cm]{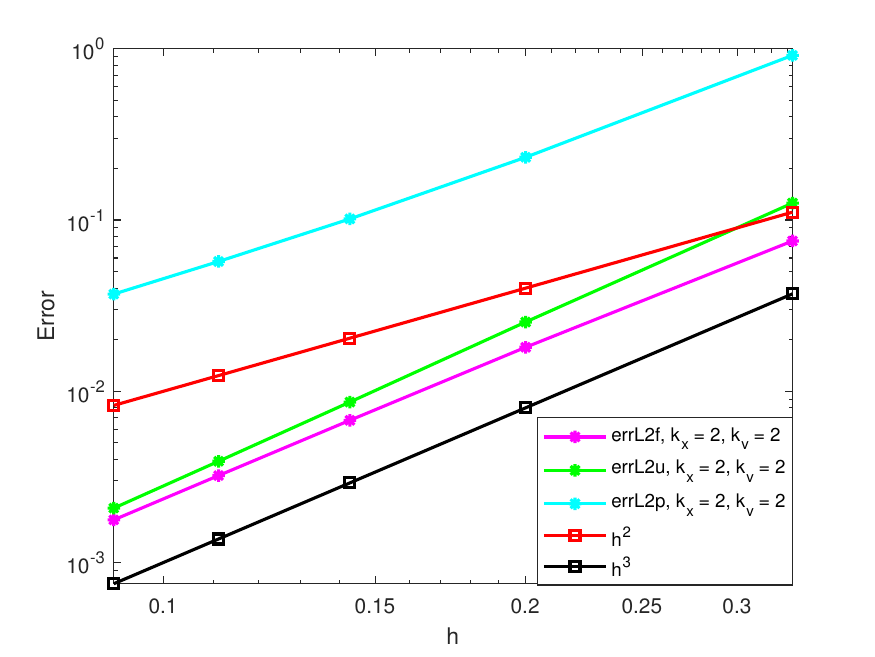}
	\caption{Convergence rates for the distribution function $f$, for the velocity $\bm{u}$ and for the pressure $p$ in the Example \ref{exm-1-vus}.   }
	\label{fig:1-vus}
\end{figure}

\begin{exm}\label{exm-2-vus}
	The second example takes the following as the exact solutions:
	\begin{equation*}
		\begin{aligned}
			f(t,x,y,v_1,v_2) &= \sin(\pi(x-t))\sin(\pi(y-t))e^{(-v_1^2-v_2^2)}(1-v_1^2)(1-v_2^2)(1+v_1)(1+v_2), 
			\\
			u_1(t,x,y) &= -\cos(2\pi x - t)\sin(2\pi y - t), \qquad u_2(t,x,y) = \sin(2\pi x - t)\cos(2\pi y - t),
			\\
			p(t.x,y) &= 2\pi\left(\cos(2\pi y - t) - \cos(2\pi x - t)\right),
		\end{aligned}
	\end{equation*} 
	with the corresponding initial data:
	\[
	f(0,x,y,v_1,v_2) = \sin(\pi x)\sin(\pi y)e^{(-v_1^2-v_2^2)}(1-v_1^2)(1-v_2^2)(1+v_1)(1+v_2),
	\]
	\[
	u_1(0,x,y) = -\cos(2\pi x)\sin(2\pi y), \qquad u_2(0,x,y) = \sin(2\pi x)\cos(2\pi y). 
	\]
\end{exm}

We run the simulations for the domains $\Omega_x = [0,1]^2$ and $\Omega_v = [-1,1]^2$. The penalty parameter chosen to be $10$ and for $k_x = 1,1$ and $k_v = 1,2$ with the final time $0.1$ and for $k_x = k_v = 2$ the final time is $0.01$.

\begin{figure}
	\centering
	\includegraphics[width=6.25cm]{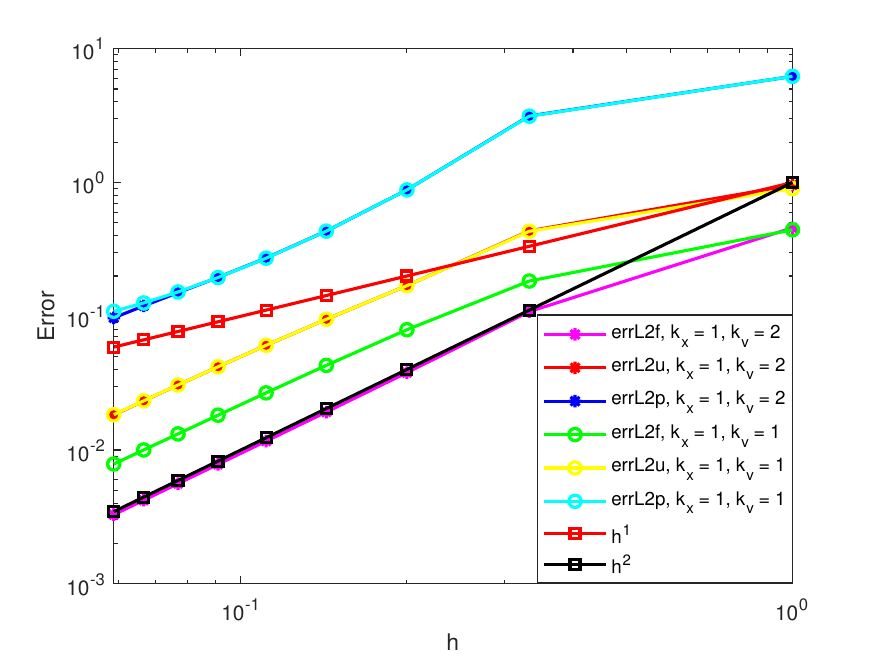}
	\includegraphics[width=6.25cm]{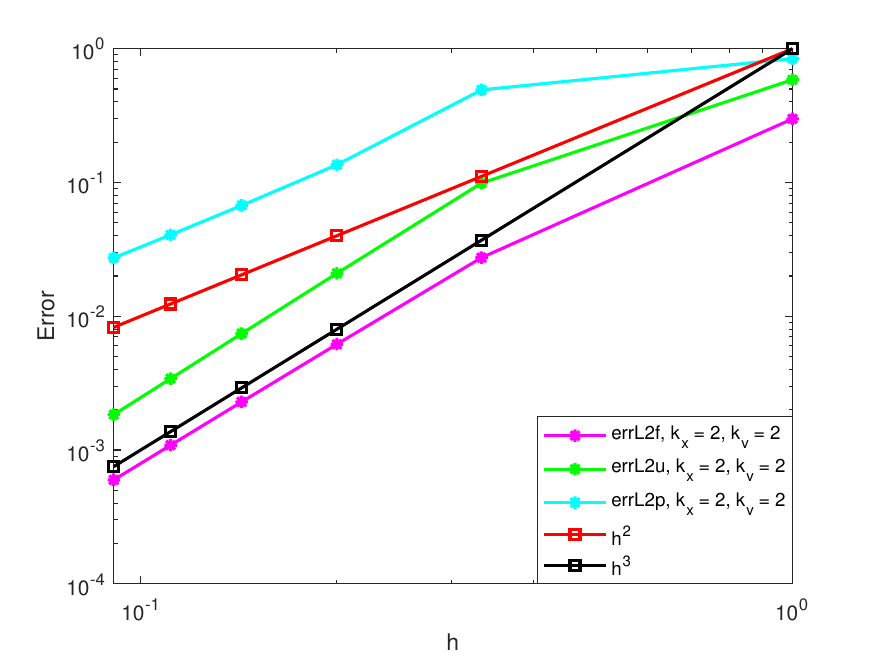}
	\caption{Convergence rates for the distribution function $f$, for the velocity $\bm{u}$ and for the pressure $p$ in the Example \ref{exm-2-vus}.   }
	\label{fig:2-vus}
\end{figure}  
\noindent
\textbf{Observations:}
\begin{itemize}
	\item From Figure \ref{fig:1-vus}(Left) and Figure \ref{fig:2-vus}(Left), it is easily observed that by choosing degrees of polynomials $k_x = 1, k_v = 1$ and $k_x = 1, k_v = 2$, we achieve an order of convergence for the distribution function $f$ and the fluid velocity $\bm{u}$ equalling $\min{(k_x,k_v)} + 1$ which is $2$ and for pressure $p$, the order of convergence obtained equals $\min(k_x,k_v)$ which is $1$. 
	\item By taking degree of polynomials $k_x = k_v = 2$, from Figure \ref{fig:1-vus}(Right) and Figure \ref{fig:2-vus}(Right), we obtain the order of convergence for the distribution function $f$ and the fluid velocity $\bm{u}$ equals $\min(k_x,k_v) + 1$ which is $3$, for the fluid pressure $p$ the order of convergence equals $\min(k_x,k_v)$ which is $2$.
\end{itemize}

	\section{Conclusion}\label{sec:conclude}

In this paper, a semi-discrete numerical method for $2$D Vlasov-Stokes equation is introduced and analysed (see \eqref{bh}-\eqref{ch2} for the discrete problem). This is a dG-dG method for the Vlasov and the Stokes equations in phase space. The scheme is mass and momentum conserving. In continuum case, a non-negative initial data $(f_0(x,v) \geq 0)$ yields a non-negative solution for all times. At present, we are unable to prove a similar positivity preserving property for our discrete system and hence, it is difficult to prove a discrete energy dissipation property. Optimal rates of convergence, with regards to the degree of polynomials $k \geq 1$, for the distribution function $f(t,x,v)$ and the fluid velocity $\bm{u}(t,x)$ are proved in the Theorem \ref{fi} in $L^\infty(0,T;L^2(\Omega))$ and $L^\infty(0,T;\bm{L}^2)$-norms, respectively. These optimal rates help us deduce super-convergence result for $\left(\bm{\Pi_uu} - \bm{u}_h\right)$ in $\vertiii{\cdot}$-norm (see Lemma \ref{utbound}). This enables us to derive optimal rate of convergence for the fluid velocity in $\L^\infty(0,T;\bm{L}^p)$-norm $1 \leq p \leq \infty$, subject to the availability of certain projection estimates. In Lemma \ref{pbound}, super-convergence result for $\left(\bm{\Pi_uu} - \bm{u}_h\right)$ and $\left(\Pi_pp - p_h\right)$ in $L^2(0,T;\vertiii{\cdot})$ and $L^2((0,T)\times \Omega_x)$-norms, respectively are shown. The optimal rate of convergence for the fluid pressure in $L^2((0,T)\times \Omega_x)$-norm is derived in Theorem \ref{fi1}. Finally, in subsection \ref{3Dsubsec} we comment on the $3$D Vlasov-Stokes equation.\\

\section*{Statements and Declarations}
\subsection*{Ethical Approval} Not Applicable
\subsection*{Data Availability}
The codes during the current study are available from the corresponding author on reasonable request.
\subsection*{Conflict of Interest}
The authors declare that they have no conflict of interest.
\subsection*{Funding}
The work of the second author was supported by the SERB project Grant No. CRG/2023/000721 of Govt. of India.

\subsection*{Author Contributions}
All authors contributed equally to preparing this manuscript. All authors read and approved the final manuscript.

\subsection*{Acknowledgments} 
The second author is gratefully acknowledges the support by the SERB project Grant No. CRG/2023/000721 of Govt. of India.

\bibliography{references}
\bibliographystyle{amsalpha}

\end{document}